\newtheorem{theorem}{Theorem}
\theoremstyle{definition}
\newtheorem{example}[theorem]{Example}
\theoremstyle{plain}
\newtheorem{proposition}[theorem]{Proposition}
\theoremstyle{definition}
\newtheorem{definition}{Definition}[section]
\newtheorem*{remark}{Remark}
\theoremstyle{remark}
\newcommand{\rI}{{\rm I}}
\newcommand{\F}{\mathbb{F}}
\newcommand{\R}{\mathbb{R}}
\newcommand{\bT}{\mathbb{T}}
\newcommand{\V}{\mathbb{V}}
\newcommand{\W}{\mathbb{W}}
\newcommand{\Z}{\mathbb{Z}}
\newcommand{\bG}{\mathbb{G}}
\newcommand{\bH}{\mathbb{H}}
\newcommand{\bV}{\mathbb{V}}
\newcommand{\A}{\mathcal{A}}
\newcommand{\B}{\mathcal{B}}
\newcommand{\I}{\mathcal{I}}
\newcommand{\K}{\mathcal{K}}
\newcommand{\U}{\mathcal{U}}
\newcommand{\cA}{\mathcal{A}}
\newcommand{\cC}{\mathcal{C}}
\newcommand{\cE}{\mathcal{E}}
\newcommand{\cF}{\mathcal{F}}
\newcommand{\cG}{\mathcal{G}}
\newcommand{\cH}{\mathcal{H}}
\newcommand{\cI}{\mathcal{I}}
\newcommand{\cM}{\mathcal{M}}
\newcommand{\cO}{\mathcal{O}}
\newcommand{\cQ}{\mathcal{Q}}
\newcommand{\cR}{\mathcal{R}}
\newcommand{\cS}{\mathcal{S}}
\newcommand{\cU}{\mathcal{U}}
\newcommand{\cV}{\mathcal{V}}
\newcommand{\permaviss}{\textsc{PerMaViss}}
\newcommand{\imagekernel}{\texttt{image\_kernel}}
\newcommand{\cech}{\v{C}ech }
\newcommand{\cmax}{{\rm max}}
\newcommand{\clog}{{\rm log}}
\newcommand{\Ho}{{\rm H}}
\newcommand{\PH}{{\rm PH}}
\newcommand{\Id}{{\rm Id}}
\newcommand{\Ker}{{\rm Ker}}
\newcommand{\Img}{{\rm Im}}
\newcommand{\dimn}{{\rm dim}}
\newcommand{\Tot}{{\rm Tot}}
\newcommand{\GK}{{\rm GK}}
\newcommand{\GZ}{{\rm GZ}}
\newcommand{\IB}{{\rm IB}}
\newcommand{\bR}{{\bf R}}
\newcommand{\SpCpx}{{\rm \bf SpCpx}}
\newcommand{\Vect}{{\bf Vect}}
\newcommand{\vect}{{\bf vect}}
\newcommand{\PMod}{{\bf PMod}}
\newcommand{\morph}[3]{#1 : #2 \rightarrow #3} 
\newcommand{\surjmorph}[3]{#1 : #2 \twoheadrightarrow #3}
\newcommand{\injmorph}[3]{#1 : #2 \hookrightarrow #3}
\newcommand{\finset}[4]{\{{#1}_{#2} \}_{#3 \leq #2 \leq #4}}
\newcommand{\gset}[3]{\{{#1}_{#2} \}_{#2 \in #3}}
\newcommand{\al}[1]{\alpha_{#1}}
\newcommand{\be}[1]{\beta_{#1}}
\newcommand{\bOnePar}[2]{{\bf 1}_{#1}\left(\, #2 \,\right)}
\newcommand{\bOne}[1]{{\bf 1}_{#1}}
\newcommand{\bigBarSum}{
  \mathop{
    \vphantom{\bigoplus} 
    \mathchoice
      {\vcenter{\hbox{\resizebox{\widthof{$\displaystyle\bigoplus$}}{!}{$\boxplus$}}}}
      {\vcenter{\hbox{\resizebox{\widthof{$\bigoplus$}}{!}{$\boxplus$}}}}
      {\vcenter{\hbox{\resizebox{\widthof{$\scriptstyle\oplus$}}{!}{$\boxplus$}}}}
      {\vcenter{\hbox{\resizebox{\widthof{$\scriptscriptstyle\oplus$}}{!}{$\boxplus$}}}}
  }\displaylimits 
}
\newcommand{\barSum}{\boxplus}
\newcommand{\barRel}[3]{#1 \sim {\rm I}(#2,#3)}
\newcommand{\RORel}[3]{#1 \sim [#2,#3)}	
\newcommand{\GInt}[2]{{\rm I}(#1, #2)}
\title{Distributing Persistent Homology Via Spectral Sequences}
\author{\' Alvaro Torras Casas	}
\address{School of Mathematics, Cardiff University, Senghennydd Road, Cardiff, CF24 4AG}
\email{TorrasCasasA@cardiff.ac.uk}
\thanks{ \' Alvaro Torras Casas is supported by an EPSRC grant with reference EP/N509449/1}
\begin{document}

\maketitle 

\begin{abstract}
We set up the theory for a distributed algorithm for computing persistent
homology.  For this purpose we develop linear algebra of persistence modules.
We present bases of persistence modules, and give motivation as for the
advantages of using them.  Our focus is on developing efficient methods for the
computation of homology of chains of persistence modules.  Later we give a
brief, self contained presentation of the Mayer-Vietoris spectral sequence.
Then we study the Persistent Mayer-Vietoris spectral sequence and  present a
solution to the extension problem.  Finally, we review \permaviss, a method
implementing these ideas. This procedure distributes simplicial data, 
while focusing on merging homological information.   

\end{abstract}

\keywords{{\bf Keywords} Spectral Sequences $\cdot$ 
			 Distributed Persistent Homology $\cdot$ 
			 Mayer-Vietoris}

{{\bf Mathematics Subject Classification (2010)} 55-04 $\cdot$ 55N35 $\cdot$ 55T99}

\section{Introduction}

\subsection{Motivation}

Persistent homology has existed for about two decades \citep{Edelsbrunner2002}.
This tool of applied topology has played a central role in applications, such as
the study of geometric structure of sets of points lying in $\R^n$,
see~\citep{Delfinado1995, Edelsbrunner2002}.  This introduced the field of
Topological Data Analysis which, very soon, was applied to a multitude of
problems, see \citep{Carlsson2009, Ghrist} for a survey article and an
introduction. Among others, persistent homology has been applied to study
coverage in sensor networks \citep{deSilvaGhrist2007},  pattern detection
\citep{Robins2016}, classification and recovery of signals \citep{Robinson2014}
and it has also had an impact on shape recognition using machine learning
techniques, see \citep{AdamsEmerson2017, DiFabio2012}.  All these applications
motivate the need for fast algorithms for computing persistent homology. The
usual algorithm used for these computations was introduced in
\citep{Edelsbrunner2002}, with some later additions to speed up such as those of
\citep{Chen2011, Chen2013, DeSilva2011}. In~\citep{Milosavljevic2011} persistent homology is proven to be computable in matrix multiplication time. However, since these matrices become large very quickly, the computations are generally very expensive, both in
terms of computational time and in memory required. 

In practice computing the persistent homology of a given
filtered complex is equivalent to computing its matrices of differentials and
perform successive  Gaussian eliminations; see \citep{EdelsbrunnerHarer2010,
Edelsbrunner2002}.
In recent years, some methods have been developed for the parallelization of
persistent homology. The first approach was introduced in~\citep{EdelsbrunnerHarer2010} 
as the \emph{spectral sequence algorithm}, and was successfully implemented in 
\citep{BauerKerberReininghaus2014b}. This consists in dividing the original
matrix $M$ into groups of rows, and sending these to different processors. These
processors will, in turn, perform a local Gaussian Elimination and share the
necessary information between them, see \citep{BauerKerberReininghaus2014b}.  
On the other hand, a more topological approach is presented in~\citep{lewismorozov2015}. It
uses the~\emph{blow-up complex} introduced in~\citep{ZomorodianCarlsson2008}. 
This approach first takes a cover $\cC$ of a filtered simplicial complex $K$, 
and uses the result that the persistent homology of $K$ is isomorphic to that of 
the blow-up complex $K^\cC$. This proceeds by 
computing the sparsified persistent homology for each cover, and then use this
information to reduce the differential of $K^\cC$ efficiently. 
Both of these parallelization methods have provided substantial speedups
compared to the standard method presented in~\citep{Edelsbrunner2002}.

Following the 
ideas on~\citep{ZomorodianCarlsson2008}, having an understanding of how persistence
barcodes relate to a cover can help us obtain better representatives. On this basis, it would be 
desirable to have a method that leads to the speedups from 
\citep{BauerKerberReininghaus2014b, lewismorozov2015}, 
while still keeping cover information from~\citep{ZomorodianCarlsson2008}. 
Further, it would also be desirable to 
drop all restrictions in covers, and consider 
functional covers such as those used in the \emph{mapper algorithm}, see~\citep{Singh2007}. 
This last point limits substantially the use of the blowup-complex, since the number
of simplices grows very quickly when we allow the intersections to grow. In fact, in
the extreme case where a complex $K$ is covered by $n$ copies of $K$, the 
blowup complex $K^\cC$ has size $2^n |K|$. 

\subsection{The Persistence Mayer Vietoris spectral sequence and related literature}

Since distribution is an important issue in persistent homology, it is worth
exploring which classical tools of algebraic topology could be used in this
context.  A very well-known tool for distributing homology computations is
the Mayer-Vietoris spectral sequence, see~\citep{Chow2006} for a quick introduction to 
spectral sequences. It is no surprise that these objects work in in this context, since 
they have been
employed for similar problems for a long time, see~\citep{BoTu1982} or~\citep{McCleary2001}.
Since the category of persistence modules and persistence morphisms is an
abelian category, the process of computing a spectral sequence should be more or
less straightforward. 
However, there is always the question of how we implement this in practice. 
Furthermore, this approach has been already proposed in \citep{Lipsky2011},
although without a solution to the extension problem. Later, spectral sequences
were used for distributing computations of
cohomology groups in a field in~\citep{CGN2016}, and recently 
in~\citep{Yoon2018} and~\citep{yoon2020persistence} spectral 
sequences are used for distributing persistent homology computations.  However, all
of \citep{CGN2016,Yoon2018,yoon2020persistence} assume that the nerve of the 
cover is one dimensional. 

The first problem when dealing with spectral sequences is that we need to be
able to compute images, kernels and quotients. Needless to say, these should be
computed in an optimal way.  
This question has already been studied in~\citep{CohenSteiner2009}, where the
authors give a very efficient algorithm. However, there are couple of problems
that come up when using~\citep{CohenSteiner2009} in spectral sequences:

\begin{enumerate}

\item In~\citep{CohenSteiner2009} the authors assume that a
given morphism is induced by the inclusion $X \subseteq Y$ of two given filtered
simplicial complexes. This is not the case in spectral sequences, where the maps
in the second, third and higher pages are not induced by a simplicial
morphism. Furthermore, even when computing the first page this is not the case.
Indeed, the \cech differentials are not inclusions at all, where each
simplex is mapped to its copy on different covers. 
This means that the algorithm in~\citep{CohenSteiner2009}
needs to be adapted to our case. 

\item A key assumption in~\citep{CohenSteiner2009} is that the
filtrations in $X$ and $Y$ are both general. This is a fairly broad premise
in cases such as when both $X$ and $Y$ are Vietoris Rips complexes on two point
clouds. However, in spectral sequences this hypothesis \emph{hardly
ever} holds. Indeed, this follows from the fact that 
a simplex might be contained in various overlapping covers.  As
one can see in table 2 from~\citep{CohenSteiner2009}, 
the authors assume that there are only 6 possible
combinations of births and deaths in images, kernels and cokernels. When
generality does not hold, the number of cases is arbitrary.

\end{enumerate}

Thus, if we want to compute images, kernels and cokernels, we will need to be
able to overcome these two difficulties first. Also, notice that a good solution
should lead to the representatives, as these are needed for the spectral sequence. 

The other difficulty that one might encounter in spectral sequences comes with
the extension problem. That is, once we have computed the spectral sequence, we
still need to recompose \emph{broken barcodes} in order to recover the global
persistent homology. Within the context of persistent homology, the extension
problem first appeared in section~6 from~\citep{Govc_2017}. There the authors
give an approximate result that holds in the case of acyclic coverings. This
allows them to compare the persistent homology to the lower row of the infinity
page in the spectral sequence. This leads to an $\epsilon$-interleaving between
the global persistent homology and that of the filtered nerve. 
Later, the extension problem appeared in the PhD Thesis of Hee Rhang Yoon~\citep{Yoon2018}, 
and also in the recent joint work with Robert Ghrist~\citep{yoon2020persistence}. In section 4.2.3
from Yoon's Thesis, the author gives a detailed solution for the extension
problem in the case when the nerve of the cover is one dimensional.  
  
\subsection{Original Contribution}

In this paper, we set the theoretical foundations for a distributed method on the input
data.  In order to do this, we use the algebraic power of the Mayer-Vietoris
spectral sequence.  Since the aim is to build up an explicit algorithm,  we
need to develop linear algebra of persistence modules, as done through
Section~\ref{sec:persistence-modules}. There, we define \emph{barcode bases}
and also we develop an operation $\barSum$ that allows us to determine whether
a group of \emph{barcode vectors} are linearly independent or not. This
machinery, although it might seem artificial, is the key to understanding what it
really means to subtract columns from left to right in the Gaussian
elimination outlined in \imagekernel, see Algorithm~\ref{cde:img-kernel}. 
Also, it helps us to encapsulate all
the information related to a persistence morphism in a matrix that depends on the 
choice of two barcode bases. This is analogous to the case of linear algebra, where a linear morphisms
is given in terms of a matrix relative to a domain and codomain basis. This
approach has the advantage that \imagekernel~addresses the two issues
raised above with regards to~\citep{CohenSteiner2009}. In fact, \imagekernel~works 
for morphisms between any pair of \emph{tame} persistence modules. 

Next in section \ref{sec:MayerVietorisSS}, we give a detailed review of the Mayer-Vietoris
spectral sequence in the homology case. 
This is followed by section~\ref{sub:extension-problem}, where we give a solution to the extension
problem. The solution is given by a careful consideration of the total
complex homology, together with the use of barcode basis machinery developed 
in section~\ref{sec:persistence-modules}.  In secton~\ref{sec:permaviss} we introduce \permaviss, 
an algorithm for 
computing the persistence Mayer-Vietoris spectral sequence and solving the extension problem. 
The advantage of this procedure is that all the simplicial information 
is enclosed within local matrices. 
This has one powerful consequence; this method consists in
computing local Gaussian eliminations plus computing \imagekernel~on matrices whose order is that
of homology classes.
In particular, given enough processors and a `good' cover
of our data, one has that the complexity is about
$$
\cO(X^3) + \cO(H^4),
$$
where $X$ is the order of the maximal local complex and $H$ is the overall number of
nontrivial persistence bars on the whole dataset. For more details on this, 
we refer the reader to section~\ref{sec:complexity_permaviss}.

By using the ideas in this text we developed
\permaviss, a Python3 library that computes the Persistence Mayer-Vietoris spectral sequence. 
In the results from~\citep{permaviss}, one can see that
nontrivial higher differentials come up and also the extension
problem is a fairly frequent phenomenon of nontrivial solution. 
This supports the idea that the spectral sequence adds more information on
top of persistent homology. 
Finally, we outline
future directions, both for the study of the Persistence Mayer Vietoris
spectral sequence and future versions of \permaviss. 

\section{Preliminaries}

\subsection{Simplicial Complexes}
\label{sub:simplicial_complexes}
\begin{definition}
Given a set $X$, a \emph{simplicial complex} $K$ is a subset of the power set
$K \subseteq P(X)$ such that if $\sigma \in K$, then for all subsets 
 $\tau \subseteq \sigma$ we have that $\tau \in K$.  An element $\sigma \in K$ will be called a
$n$-\emph{simplex} whenever $|\sigma| = n+1$, whereas a subset 
$\tau \subseteq \sigma$ will be called a \emph{face}.  Thus, if a simplex is contained
in $K$ all its faces must also be contained in $K$.  Given a simplicial complex
$K$, we denote by $K_n$ the set containing all the $n$-simplices from $K$.
Given a pair of simplicial complexes $K$ and $L$, if 
$L \subseteq K$, then we say that $L$ is a \emph{subcomplex} of $K$. 
Also, given a mapping $f:K \rightarrow L$ between two simplicial complexes $K$
and $L$, we call $f$ a \emph{simplicial} morphism whenever $f(K_n) \subseteq
\bigcup_{l=0}^n L_l$ for all $n \geq 0$.  The category composed of simplicial
complexes and simplicial morphisms will be denoted by $\SpCpx$. 
\end{definition}

Let $\F$ be a field. For each $n \geq 0$ we define the free vector space over
the $n$-simplices of $K$ as
$$
S_n(K) \coloneqq \F [K_n].
$$
We also consider linear maps $d_n : S_n(K) \rightarrow S_{n-1}(K)$ usually
called \emph{differentials}, defined by
\begin{equation}\label{eq:differential}
    d_n([v_0,\ldots,v_n]) = \sum 
    \limits_{i = 0}^n (-1)^i [v_0,\ldots, \hat{v_i}, \ldots, v_n];
\end{equation}
where the hat notation, $\hat{v_i}$, is used to indicate omission of a vertex. 
Setting $S_n(K) = 0$ for all $n < 0$ we put all of these in a sequence
\begin{equation}
\label{seq:simplicial-chain-cpx}
\xymatrix{
	0 \  &
	S_0(K) \ar[l]_{0} &
	S_1(K) \ar[l]_{d_1} &
	S_2(K) \ar[l]_{d_2} &
	\cdots \ar[l]_{d_3} 
}
\end{equation}
It follows from formula (\ref{eq:differential}) that the composition of two
consecutive differentials vanishes: $d_n \circ d_{n-1} = 0$ for all $n \geq 0$.  
In this case we say that~(\ref{seq:simplicial-chain-cpx}) is a \emph{chain complex}.
As a consequence, we have that $\Img(d_{n+1}) \subseteq \Ker(d_n)$, and we can define the \emph{homology} with coefficients in
$\F$ to be
$$
\Ho_n(K;\F) = \dfrac{\Ker(d_n)}{\Img(d_{n+1})},
$$
for all $n \geq 0$. In general, $\F$ will be understood by the context and the
notation $\Ho_n(K)$ might be used instead. On the other hand, we consider
the \emph{augmentation map}
$\varepsilon : S_0(K) \rightarrow \F$ defined by the assignement $s \mapsto 1_\F$, 
for any simplex $s \in S_0(K)$. Then, we define
the \emph{reduced homology} by 
$$
\widetilde{\Ho}_0(K;\F) = \dfrac{\Ker(\varepsilon)}{\Img(d_{1})},
$$
and $\widetilde{\Ho}_n(K;\F) = \Ho_n(K;\F)$ for all $n > 0$. Consider the  
chain complex $\widetilde{S}_*(K)$, obtained by augmenting~(\ref{seq:simplicial-chain-cpx}) 
by $\varepsilon$ and a copy of $\F$ in degree $-1$:
$$
\xymatrix{
    0 \                                 &
    \F          \ar[l]_0                &
    S_0(K)      \ar[l]_{\varepsilon}    &
    S_1(K)      \ar[l]_{d_1}            &
    S_2(K)      \ar[l]_{d_2}            &
    \cdots      \ar[l]_{d_3} 
}
$$
Then one can see that computing reduced homology is the same as computing 
homology on $\widetilde{S}_*(K)$. 

\begin{definition}[Standard $m$-simplex]
Given $m > 0$, we define $\Delta^m = P(\{0, 1, \ldots, m \})$, which will be called 
the \emph{standard $m$-simplex}.
This leads to a chain complex $\widetilde{S}_*(\Delta^m)$
$$
\xymatrix{
	0 \  &
	\F     \ar[l]_{0}  &
	S_0(\Delta^m) \ar[l]_{\epsilon} &
	S_1(\Delta^m) \ar[l]_{d_1} &
	S_2(\Delta^m) \ar[l]_{d_2} &
	\cdots \ar[l]_{d_3}       &
	S_n(\Delta^m) \ar[l]_{d_n} &
        0 \ar[l]
}
$$
\end{definition}
By a standard result $\widetilde{S}_*(\Delta^m)$ is \emph{exact}, that is,  
$\widetilde{\Ho}_n(\Delta^m) = 0$ for all $n \geq 0$. 
For a proof, see Theorem 8.3 in~\citep{Munk1984}.
\begin{definition}
Let $K$ be a simplicial complex. A finite set $\U = \{ U_i \}_{i = 1}^m$ of
subcomplexes from $K$, is said to be a \emph{cover} of $K$ whenever $K = \bigcup_{i = 1}^m U_i$.   
For each simplex $\sigma \in \Delta^m$, we will use the notation $U_\sigma = \bigcap_{i \in \sigma} U_i$.
Altogether, we define the \emph{nerve} of $\U$ as the simplicial complex
$$
N^\U = \Big\{\, \sigma : U_\sigma \neq \emptyset \,\Big\} \subseteq \Delta^m.
$$
This leads to an augmented chain complex $\widetilde{S}_*(N^\U)$ with differentials
denoted by $d^{N^\U}_*$. 
In particular, given a simplex $\sigma \in N^\U$, we have a simplicial injection
$f^\sigma :\Delta^{|\sigma|} \hookrightarrow N^\U$. This induces an injection of chain complexes
$f^\sigma_*: \widetilde{S}_*(\Delta^{|\sigma|}) \hookrightarrow \widetilde{S}_*(N^\U)$ whose 
image $f^\sigma_*\Big(\, \widetilde{S}_*(\Delta^{|\sigma|})\,\Big)$
is exact. 
\end{definition}
\begin{definition}[\cech chain complex]
Let $K$ be a simplicial complex and let $\cU = \{ U_i \}_{i=1}^{m}$ be a cover
of $K$ by $m$ subcomplexes. For each simplex $s \in K$, there exists
a simplex $\sigma(s) \in N^\U$ with maximal cardinality $|\sigma(s)|$, so that $s \in U_{\sigma(s)}$. 
Then, for a fixed degree $n \geq 0$, we define the $(n, \cU)$-\cech chain complex by 
$$
\check{C}_*(n, \cU;\F) = \bigoplus_{s \in K_n} f^{\sigma(s)}_*\Big(\,\widetilde{S}_*\big(\,\Delta^{|\sigma(s)|}\,\big)\,\Big).
$$
\end{definition}
For $k \geq -1$, we will use the notation $(\tau)_s$ with $s \in K_n$ and 
$\tau \in \widetilde{S}_k\big(\,\Delta^{|\sigma(s)|}\,\big)$, to denote an element in $\check{C}_k(n, \cU;\F)$
that is zero everywhere except for $\tau$ in the component indexed by $s$. Then
the image of the $k$-\cech differential is defined by the assignement
$\check{\delta}_k^\U((\tau)_s) = (d_k^{N^\U} \tau)_s$. 
Notice that by definition the \cech complex is a chain complex and is exact. 
Also, one can see that
$$
\check{C}_{-1}(n, \cU;\F) \simeq S_n(K)
$$
follows easily. On the other hand, for each $k \geq 0$ we define an isomorphism
$$
\psi_k : \check{C}_k(n, \cU;\F) \simeq \bigoplus \limits_{\sigma \in N^\U_k} S_n(U_\sigma)
$$
by sending
$(\tau )_s$ to $(s)_\tau$ for any pair of simplices $s \in K_n$ and 
$\tau \in f^{\sigma(s)}_k\Delta^{|\sigma(s)|}$. 
In particular, we can rewrite the $(n, \cU)$-\cech chain complex as a sequence
\begin{equation}
\label{seq:cech_sequence}
\xymatrix@C=0.6cm{
	0		&
	S_n(K)	\ar[l]	&
	\bigoplus \limits_{\sigma \in \substack \Delta^m_0} S_n(U_\sigma) \ar[l]_{\delta_0}	&
	\bigoplus \limits_{\sigma \in \substack \Delta^m_1} S_n(U_\sigma) \ar[l]_{\delta_1}	&
	\bigoplus \limits_{\sigma \in \substack \Delta^m_2} S_n(U_\sigma) \ar[l]_{\delta_2}	&
	\cdots  \ar[l]
}
\end{equation}
where the differentials $\delta_i$ are chosen in order to commute with the $\psi_i$'s. That
is, one has that, for any pair of simplices $\sigma \in N^\U_k$ and $s \in (U_\sigma)_n$, we have 
equalities 
\begin{eqnarray*}
\delta_k((s)_\sigma) &=&    \psi_k \circ \check{\delta}_k \circ \psi_k^{-1} \big(\,(s)_\sigma\,\big) \\ 
                     &=&    \psi_k \circ \check{\delta}_k \big(\,(\sigma)_s\,\big) \\ 
                     &=&    \psi_k \big(\,(d^{N^\U}_k\sigma)_s\,\big) \\ 
                     &=&    \Big(\,\big\{\,d^{N^\U}_k(\sigma)\,\big\}_\tau \cdot s\,\Big)_{\tau \in N^\U_{k-1}}, \\
\end{eqnarray*}
where $\big\{\,d^{N^\U}_k(\sigma)\,\big\}_\tau \in \F$ is the coefficient of $d^{N^\U}_k(\sigma)$ in 
the simplex $\tau \in N^\U_{k-1}$. 

\begin{remark}
Alternatively, the \cech chain complex can be defined straight away as 
the sequence~(\ref{seq:cech_sequence}). Then, one can see that this is an 
exact chain complex by using cosheaf theory. Namely, given a simplicial
complex $K$, we consider the topology where the open sets are given by 
subcomplexes. Then, for each integer $n \geq 0$, one has the simplicial
\emph{precosheaf} as an assignement
$$
\cS_n : V \mapsto S_n(V)
$$
for each subcomplex $V \subseteq K$. This precosheaf is in fact a
\emph{flabby cosheaf}. Then, using 2.5, 4.3, and 4.4 from section VI.
in~\citep{Bredon1997}, one has exactness of the \cech chain complex.  
\end{remark}

\subsection{Persistence Modules}
Let $\bR$ be the category of real numbers as a poset, where $\hom_\bR (s,t)$
contains a single morphism whenever $s \leq t$, and is empty otherwise.  
Let $\F$ be a field and let $\Vect$ denote the category of $\F$-vector spaces. Also
let $\vect \subset \Vect$ be the subcategory of finite dimensional
$\F$-vector spaces. 

\begin{definition}
A \emph{filtered simplicial complex} is a functor $K \colon \bR \rightarrow \SpCpx$, 
such that $K_s \subseteq K_t$ for any pair $s \leq t$ in $\bR$. Notice that the 
results from subsection~\ref{sub:simplicial_complexes} also hold for 
filtered simplicial complexes. 
\end{definition}

\begin{definition}
Let $K$ be a filtered simplicial complex and $n \geq 0$. We define
the $n$-persistent homology of $K$ as the composed functor $\Ho_n(K) \colon \bR \rightarrow \Vect$. 
We will also denote this by $\PH_n(K)$. 
\end{definition}

\begin{definition}
A \emph{persistence module} $\V$ is a covariant functor $\bV
: \bR \rightarrow \Vect$. That is, to any $r \in \bR$,  $\V$ assigns a vector space in $\Vect$
which will be denoted either by $\V(r)$ or  $\V^r$. Additionally, to
any pair of real numbers $s \leq t$, there is a linear morphism $\V(s \leq t)
: \V^s \rightarrow \V^t$. These morphisms satisfy $\V(s
\leq s) = \Id_{\V^s}$ for any $s \in \bR$, and the relation
$\V(r \leq t) = \V(s \leq t) \circ \V(r \leq s)$ for all $r \leq s \leq t$ in
$\bR$. Given two persistence modules $\V$ and $\W$, a \emph{morphism of
persistence modules} is a natural transformation $\morph{f}{\V}{\W}$.
Thus, for any pair of real numbers $s \leq t$, there is a commuting square
$$
\xymatrix@C=1.5cm{
\V^s \ar[r]^{\V(s \leq t)} \ar[d]_{f^s} &
\V^t \ar[d]_{f^t} \\
\W^s \ar[r]^{\W(s \leq t)} &
\W^t.
}
$$
We denote by \PMod~the category of persistence modules and persistence morphisms. 
\end{definition}
Hence, whenever we are speaking about the \emph{naturality} of $f$ we will be referring to the
commutative square above. We say that a persistence morphism $f:\V \rightarrow \W$ 
is an \emph{isomorphism} whenever $f_t$ is an isomorphism for all $t \in \bR$. We write $\V \simeq \W$
to denote that $\V$ is isomorphic to $\W$. 
A pointwise finite dimensional (p.f.d.) persistence module is a functor $\morph{\V}{\bR}{\vect}$, 
where $\vect$ is the category of finite vector spaces.
\begin{definition}
A sequence of persistence modules and persistence morphisms
$$
\xymatrix{
\cdots \ar[r] &
\V^{k-1} \ar[r]^{f^{k-1}}       & 
\V^{k}   \ar[r]^{f^{k}}         & 
\V^{k+1} \ar[r]^{f^{k+1}}       & 
\cdots
}
$$
is a \emph{chain of persistence modules} whenever $f^{k} \circ f^{k-1} = 0$
for all $k \in \Z$. 
\end{definition}

\begin{example}
A special class of persistence modules will be the \emph{interval modules}. For any pair
of real numbers $s \leq t$, we denote by $\GInt{s}{t}$ the interval module
\begin{equation}
    \GInt{s}{t}(r) = \begin{cases}
    \F  \qquad   \mbox{for $r \in [s, t)$,} \\
    0   \qquad   \mbox{otherwise.}
    \end{cases}
    \label{def:right-open-interval}
\end{equation}
The morphisms  $\GInt{s}{t}(a \leq b)$ will be the identity for
any two $a,b \in [s,t)$ and will be $0$ otherwise.  
\end{example}

Notice that in an
analogous way we could have defined barcodes $\rI(s,t)$ over intervals of
the form $[s,t]$, $(s,t]$ or $(s,t)$, with $s \leq t$.
For a given interval $\GInt{s}{t}$, the values $s$ and $t$ will be called respectively
the \emph{birth} and \emph{death} values.
Whenever $\bV$ is a p.f.d persistence module, then it can be uniquely
decomposed as a direct sum of barcodes  $\bigoplus_{i \in J}\GInt{s_i}{t_i}$,
as shown in~\citep{ChazalSilvaGlisseOudot}.
This means  that there is an isomorphism
$\V \simeq \bigoplus_{i \in J}\GInt{s_i}{t_i}$ of persistence modules.
This will be called the barcode decomposition of $\V$.
Throughout this text, we will mainly be studying
persistence modules that decompose into barcodes of the form (\ref{def:right-open-interval}).

\section{Homology of Persistence Modules}
\label{sec:persistence-modules}

\subsection{Barcode Bases}

In this section we will use the result from~\citep{ChazalSilvaGlisseOudot}
to introduce barcode bases. Our aim will be
to come up with an efficient way of computing homology in this category.
At the end we will introduce an
algorithm for computing images and kernels, and we 
will evaluate its computational complexity. 

\begin{definition}[Barcode Basis]
\label{def:barcode_basis}
A barcode basis $\B$ of a persistence module $\V$ is a choice of an
isomorphism, $\morph{\beta}{\bigoplus_{i \in I} \GInt{a_i}{b_i}}{\V}$. Each
direct summand of $\beta$ defines a restricted morphism from a barcode
$\morph{\beta_i}{\GInt{a_i}{b_i}}{\V}$, and will be called a \emph{barcode
generator}. We will usually denote a barcode basis $\B$ by the set of barcode
generators $\B = \gset{\beta}{i}{I}$.  

Within the context of definition~\ref{def:barcode_basis}, we would like to make some
notational remarks.  
\end{definition}

\begin{itemize}
\item Given a barcode generator $\beta \in \B$, we write 
$\RORel{\beta}{a}{b}$ to denote that $\beta$ is a natural transformation $\beta : \GInt{a}{b} \rightarrow \V$. 
In this case we say that $\beta$ is \emph{associated} to the interval $[a, b)$.  
\item Notice that if we choose $\beta \in \B$ with $\RORel{\beta}{a}{b}$ and $r \in \bR$, we have a 
linear transformation $\beta(r) : \GInt{a}{b}(r)\rightarrow \V(r)$. 
In particular, since $\GInt{a_i}{b_i}(r)$ is either $0$ or $\F$, the morphism $\beta(r)$ is 
uniquely determined by the image $\beta(r)(1_\F) \in \V(r)$. For the sake of simplicity, 
we will write
$\beta(r) \in \V(r)$ instead of $\beta(r)(1_\F) \in \V(r)$.  
\item For any given $r \in \bR$, we define the pointwise basis in $r$ by 
$$
\B^r = \Big\{\, \be{i} : i \in I,\;\; \be{i}(r) \neq 0 \,\Big\} \subseteq \B.
$$
In this case,  if $\beta \in \B^{r}$ and $\RORel{\beta}{a_\beta}{b_\beta}$, then
$a_\beta \leq r < b_\beta$ by naturality of $\beta$.
Also, evaluating all the elements from $\B^r$ on $1_\F$ leads to a vector
base $\B^r(1_\F)$ for $\V(r)$. 
\end{itemize}

\begin{remark}
We can think of a persistence module $\V$ as a sheaf over $\bR$, where $\bR$ is endowed with the 
topology where the open sets are
 either the intervals $[a, \infty)$ or $(a, \infty)$, for any $a \in \bR$. Thus the restriction morphism
$\rho_{b, a} : [a, \infty) \rightarrow [b, \infty)$ with values $a \leq b$ in $\bR$, correspond
to $\V(a \leq b) : \V^a \rightarrow \V^b$.
A barcode base is a set of global sections of the sheaf $\V$,
such that they form pointwise base of the vector spaces $\V^r$, for all $r \in \bR$. That is,
$\B \subset \V$ forms a barcode base for $\V$ if and only if
$\B^r$ forms a base of $\V^r$ for all $r \in \bR$.
\end{remark}

To make our work less cumbersome, we will only focus on very simple persistence modules. In fact,
these modules will be the only ones relevant for our later applications.

\begin{definition}
A \emph{tame} persistence module $\V$, is a p.f.d. persistence module that admits a finite barcode basis
$\B = \finset{\beta}{i}{1}{N}$ and all the barcodes $\be{i}$ are associated to an interval of the form
$[a_i, b_i)$, with $a_i \leq b_i \in \bR$ for all $1 \leq i \leq N$.
\end{definition}

Thus, whenever we are speaking about tame persistence modules, we will assume that
$\GInt{a}{b}$ denotes a barcode over $[a, b)$. The first problem one encounters
when working with a barcode basis $\B = \gset{\beta}{i}{I}$ is taking linear combinations.
Whenever we take a barcode generator $\be{1} \in \B$ we have a natural transformation
$\morph{\be{1}}{\GInt{a_1}{b_1}}{\V}$. However, this property does not need to hold for general
sums. For example, suppose that $\RORel{\be{1}}{0}{2}$ and $\RORel{\be{2}}{1}{3}$ are two barcode generators
from $\B$, then we can define the sum pointwise $\gamma(r) \coloneqq \be{1}(r) +  \be{2}(r) \in \V(r)$ for all
$r \in \bR$. Even though this $\gamma$ is well defined, this assignment does not define 
a natural transformation. This is depicted in Figure~\ref{fig:not-natural-sum}, where 
we have $\gamma(1) = \be{1}(1) + \be{2}(1) \neq \be{1}(1)  = \V(0 \leq 1) \gamma (0)$.
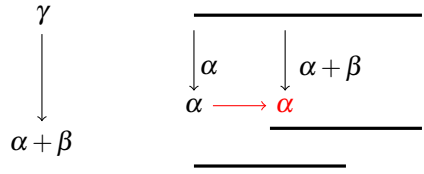
\begin{figure}
    \begin{center}
        \begin{tikzpicture}
        \node (ga) at (0,2) {$\gamma$};
        \node (albe) at (0,0.3) {$\alpha + \beta$};
        \draw[->] (ga) -- (albe);
        \begin{scope}[xshift=2cm]
        \draw[very thick] (0,0) -- (2,0);
        \draw[very thick] (1,0.5) -- (3,0.5);
        \draw[very thick] (0,2) -- (3,2);
        \draw[->] (0,1.8) -- (0, 1);
        \node (sum2) at (0.2,1.3) {$\alpha$};
        \node (al1) at (0,0.8) {$\alpha$};
        \node[color=red] (al2) at (1.2,0.8) {$\alpha$};
        \draw[color= red, ->] (al1) -- (al2);
        \draw[->] (1.2,1.8) -- (1.2, 1);
        \node (sum2) at (1.8,1.3) {$\alpha + \beta$};
        \end{scope}
        \end{tikzpicture}
        \caption{Sum of barcode generators might not be natural. }
        \label{fig:not-natural-sum}
    \end{center}
\end{figure}

More generally, assume that $\beta_1 \sim [a_1,b_1)$
and $\beta_2 \sim [a_2,b_2)$ with $a_1 < a_2 < b_2 < b_1$. In this case
$(\beta_1 + \beta_2)(s) = \V(r \leq s) (\beta_1 + \beta_2)(r)$ is not satisfied for some $r \leq s$ in $\bR$.
Something that we can do in order to `correct' this situation is to `chop down' the non-natural part.
That is, we consider the following operation
$$
\be{1}\barSum \be{2} \coloneqq \bOne{a_2}\big(\,\beta_1 + \beta_2\,\big)
$$
where we have used the step function $\morph{\bOne{s}}{\bR}{\F}$ defined by:
$$
\bOnePar{s}{t} =
\begin{cases}
0_\F \quad \mbox{if $t < s$,} \\
1_\F \quad \mbox{if $s \leq t$.}
\end{cases}
$$
Notice that in this case $\be{1} \barSum \be{2}$ is associated to the interval $\GInt{a_2}{b_1}$.
More generally, suppose we want to compute
 $\bigBarSum_{1 \leq j \leq m} k_j \beta_j$ with $k_j \in \F $ and $\beta_j \sim [a_i,b_i)$ for all $1 \leq j \leq m$. Taking
 into account the definition  of $\barSum$ for two terms and also the fact that $\bOne{a} \bOne{b} = \bOne{{\rm max}(a,b)}$,
we can inductively extend the definition:
 $$
\bigBarSum_{1 \leq j \leq m} k_j \beta_j \coloneqq \bOnePar{A}{\sum_{1 \leq j \leq m} k_j \beta_j}
 $$
 where
 $$
 A = \cmax\big\{\, a_j : 1 \leq j \leq m, \;\;k_j \neq 0 \,\big\}.
 $$
 In the trivial case of $k_j = 0$ for all $1 \leq j \leq m$,
 we will set to zero the above definition.
 On the other hand, considering the value
 $$
 B = \cmax\big\{\, b_j : 1 \leq j \leq m, \;\; k_j \neq 0 \,\big\}
 $$
 we have that $\bigBarSum_{1 \leq j \leq m} k_j \beta_j$ is associated to $\GInt{A}{B}$.
 Of course these $\beta_j$ do not need to form a basis, so perhaps the previous
 sum could have a more adjusted associated interval.
This operation will be of great use when working with persistence morphisms.

\begin{remark}
Let us introduce some properties of the step function $\bOne{s}$ for $s \in \bR$. 
For any $\RORel{\beta}{a}{b}$ one has $\bOne{s}\beta = 0$ whenever
$b \leq s$.  Also, suppose that $\{\RORel{\beta}{0}{2}, \RORel{\gamma}{0}{1}, \RORel{\tau}{0}{1}\}$
is a basis of $\V$. Then $\bOnePar{s}{\beta + \gamma}$
and $\bOnePar{s}{\beta + \tau}$ are linearly independent for all $s < 1$, but are equal for all $s \geq 1$.
Throughout this section it will be important to have these basic properties in mind.
\end{remark}

\begin{remark}
Alternatively, one can recall the definition of persistence modules as $\F[x^{[0,\infty)}]$-modules. Note that $\F[x^{[0,\infty)}]$ denotes the polynomial
ring with $\F$-coefficients and allowing all powers $x^r$ for $r \in [0, \infty)$, where by convention $x^0 = 1_\F$. Given a persistence module $\V$,
one defines a \emph{barcode vector} as a morphism of $\F[x^{[0,\infty)}]$-modules of either form:
$$
v : (x^{a_1}) \rightarrow \V \hspace{1cm}
v : \dfrac{(x^{a_1})}{(x^{b_1})} \rightarrow \V
$$
where $a_1,b_1 \in [0, \infty)$ and $a_1 < b_1$. These barcode vectors do not need to be injective. We denote by $\cV(\V)$ the set of all barcode vectors of $\V$.
The operation $\barSum$ and the step function $\bOne{s}$ have interpretations for
$\F[x^{[0,\infty)}]$-modules. Consider $v \in \cV(\V)$ with $\RORel{v}{a_1}{b_1}$ as defined above.
Then, the step function  $\bOne{s}:\cV(\V)\rightarrow \cV(\V)$ assigns
$\bOne{s}(v) = v$ whenever $s \leq a_1$, or the barcode vector;
$$
\bOne{s}(v) \coloneqq v|_{(x^{s})} : \dfrac{(x^{s})}{(x^{b_1})} \rightarrow \V
$$
for $a_1 < s$; the latter is defined to be the restriction of $v$ to the subideal $(x^s)$, since one has that $x^s = x^{a_1} x^{s - a_1}$. Suppose we have another
barcode vector $\RORel{w}{a_2}{b_2}$
with values $a_2  < b_2$  in $[0,\infty)$. Then, one defines the barcode sum $\barSum : \cV(\V) \times \cV(\V) \rightarrow \cV(\V)$ by setting
$$
v \barSum w \coloneqq  \bOne{A}(v) + \bOne{A}(w): \dfrac{(x^{A})}{(x^{B})} \rightarrow \V,
$$
where $A = \max\{a_1,a_2\}$ and $B=\max\{b_1,b_2\}$.
 In this context, a barcode basis $\B$ is a set
of barcode vectors such that:
\begin{enumerate}
	\item $\B$ generates $\cV(\V)$
	\item $\B$ are $\F[x^{[0,\infty)}]$-linearly independent with respect to $\barSum$.
\end{enumerate}
\end{remark}

Notice that, while there is a uniquely determined barcode decomposition of $\V$, the particular
choice of a basis is not unique. This is analogous to the case of vector spaces, where a vector
space can admit multiple bases but has always the same dimension. The main reason why we are introducing
barcode bases is because we would like to work with morphisms between persistence modules $f : \V \rightarrow \W$.
Even though the respective barcode decompositions of $\V$ and $\W$ are determined,  there is no unique `assignment'
of barcodes induced by $f$. In fact, it was proven in \citep[prop 5.10]{BauerLesnick2015} that matchings between barcodes of $\V$ and $\W$
cannot be defined in a functorial way.  The following example will illustrate this principle.

\begin{example}
    Consider two persistence modules:
    $$
    \V = \GInt{1}{4}
    \hspace{1cm}
    \W = \GInt{0}{3} \oplus \GInt{0}{2}
    $$
    with barcode bases $\{\RORel{\al{1}}{1}{4}\}$ and $\{\RORel{\be{1}}{0}{3}, \RORel{\be{2}}{0}{2}\}$ respectively.
    Let
    $\morph{f}{\V}{\W}$ be a morphism given by
    $f(\alpha_1) = \bOnePar{1}{\beta_1}$.
    Suppose that we had chosen an alternative barcode basis
    for $\W$ defined by setting $\beta_1' = \beta_1 + \beta_2$
    and $\beta_2' = \beta_2$.
    Thus, in this case we have that $f(\alpha_1) = \bOnePar{1}{\beta_1' - \beta_2'}$.
    Notice that the morphism $f$ will relate different intervals
     depending on the chosen barcode
    bases.  Therefore when studying morphisms we should not work directly with barcodes, but
barcode bases instead. This is illustrated in Figure~\ref{fig:not-canonical-rel}.
\end{example}

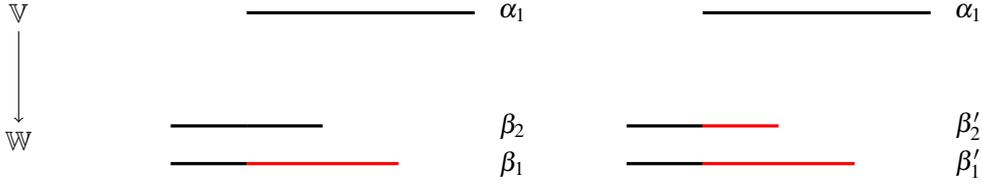
\begin{figure}
    \begin{center}
        \begin{tikzpicture}
        \node (v) at (0,2) {$\V$};
        \node (w) at (0,0.3) {$\W$};
        \draw[->] (v) -- (w);
        \begin{scope}[xshift=2cm]
        \draw[very thick] (0,0) -- (1,0);
        \draw[very thick] (0,0.5) -- (1,0.5);
        \draw[very thick, color=red] (1,0) -- (3,0);
        \draw[very thick] (1,0.5) -- (2, 0.5);
        \draw[very thick] (1,2) -- (4,2);
        \node (be1) at (4.5,0) {$\beta_1$};
        \node (be2) at (4.5,0.5) {$\beta_2$};
        \node (al1) at (4.5,2) {$\alpha_1$};
        \end{scope}
        \begin{scope}[xshift=8cm]
        \draw[very thick] (0,0) -- (1,0);
        \draw[very thick] (0,0.5) -- (1,0.5);
        \draw[very thick, color=red] (1,0) -- (3,0);
        \draw[very thick, color=red] (1,0.5) -- (2, 0.5);
        \draw[very thick] (1,2) -- (4,2);
        \node (beta1) at (4.5,0) {$\beta_1'$};
        \node (beta2) at (4.5,0.5) {$\beta_2'$};
        \node (alpha1) at (4.5,2) {$\alpha_1$};
        \end{scope}
        \end{tikzpicture}
        \caption{There is no canonical way of relating barcodes from $f$.}
        \label{fig:not-canonical-rel}
    \end{center}
\end{figure}

Let $\morph{f}{\V}{\W}$ be a morphism of tame persistence modules and 
consider two bases $\A$ and
$\B$ for $\V$ and $\W$ respectively.
For each barcode generator $\RORel{\alpha}{a}{b}$ in $\A$, we would like to define the image
$f(\alpha)$ in terms of $\B$. First notice that we have an expression for
$f(\alpha)(a)$ in terms of $\B^a$, since this forms a basis for $\W^a$. Thus there exist
coefficients $k_{\beta, \alpha} \in \F$ for all $\beta \in \B^{a}$ such that
$$
f(\alpha)(a) = \sum_{\beta \in \B^{a}} k_{\beta, \alpha} \beta(a).
$$
Therefore, since $f$ is natural, we can write the image $f(\alpha)$ as
$$
f(\alpha) \coloneqq \bOnePar{a}{\sum_{\beta \in \B^{a}} k_{\beta, \alpha} \beta}.
$$
Recall that if $\beta \in \B^{a}$ and $\RORel{\beta}{a_\beta}{b_\beta}$, then
$a_\beta \leq a \leq b_\beta$ by naturality of $\beta$.
 Also notice that if $b_\beta > b$, then $k_{\beta, \alpha} = 0$,
since otherwise $f$ would not be natural as a persistence morphism. Thus we can define the subset
of $\B^{a}$ associated to $\alpha$:
$$
\B(\alpha) \coloneqq \Big\{\, \beta  : \beta \in \B, \;\; \beta(a) \neq 0, \;\; \beta(b) = 0 \Big\} \subseteq \B^a \subseteq \B
$$
where $\RORel{\alpha}{a}{b}$. The set $\B(\alpha)$ contains the barcode generators $\beta \in \B$ such that the coefficients $k_{\beta, \alpha}$
might be non-zero.
This gives us a sharper description of $f(\alpha)$:
$$
f(\alpha) = \bOnePar{a}{\sum_{\beta \in \B(\alpha)} k_{\beta, \alpha} \beta} = \bOnePar{a}{\bigBarSum_{\beta \in \B(\alpha)} k_{\beta, \alpha} \beta}.
$$
Notice that there is no distinction between the expression above using $\barSum$ and the ordinary sum. This is because
we have already `cut away' the non-natural part of the sum.
In particular, if $\bigBarSum_{\beta \in \B(\alpha)} k_{\beta, \alpha} \beta$ is associated to $\GInt{A}{B}$, then we
can deduce that $A \leq a$ and $B \leq b$. To visualize this, consider Figure~\ref{fig:f-restricted-to-interval}
illustrating the restriction of $f$ to some
barcode $\barRel{\alpha}{a}{b}$. By pointwise-linearity and naturality of $f$, we have that 
$$
f \left( \bigBarSum_{\alpha \in \A} k_\alpha \alpha \right) =
\bigBarSum_{\alpha \in \A} k_\alpha f(\alpha)
$$
where $k_\alpha \in \F$ for all $\alpha \in \A$.

\begin{figure}
    \begin{center}
        \begin{tikzpicture}
            \draw[very thick] (0,0) -- (4,0);
            \draw[very thick] (0.3,0.2) -- (3.5,0.2);
            \draw[very thick] (0.4,0.4) -- (4.5,0.4);
            \draw[very thick] (2,1) -- (4,1);
            \draw[very thick] (2,2)--(5,2);
            \draw[very thick] node (I) at (6.5,2) {$\barRel{\alpha}{a}{b}$};
            \filldraw[fill= red, opacity = 0.3] (2,2) -- (4,2) -- (4,1) --
            (2,1) -- cycle;
            \draw node at (3,1.5) {$f$};
            \draw node at (6.5,1) {$\barRel{f(\alpha)}{a}{B}$};
            \draw[very thick, color= red] (2,0) -- (4,0);
            \draw[very thick, color= red] (2,0.2) -- (3.5,0.2);
            \draw (6.5,0) node {$\W$};
        \end{tikzpicture}
        \caption{Restriction of $f$ to  $\barRel{\alpha}{a}{b}$.
        Notice that  $\barRel{f(\alpha)}{a}{B}$ with $B \leq b$.}
        \label{fig:f-restricted-to-interval}
    \end{center}
\end{figure}

\subsection{Computing Kernels and Images}

Let $\morph{f}{\V}{\W}$ be a morphism of tame persistence modules.
The kernel of $f$ is a persistence module $\Ker(f)$ together with an
inclusion morphism $\injmorph{j}{\Ker(f)}{\V}$, such that
$\Ker(f)^r \simeq \Ker(f^r)$ for all $r \in \bR$. Therefore, if
$\K$ is a barcode basis for the kernel, then for each
barcode generator $\RORel{\kappa}{a}{b}$ we have
$$
j(\kappa) = \bOnePar{a}{\bigBarSum_{\alpha \in \A(\kappa)} k_{\alpha, \kappa} \alpha},
$$
where $k_{\alpha, \kappa}  \in \F$  for all $\alpha \in \A$. By `finding' a
basis for the kernel we mean that we want to find  $j(\K)$  in terms of the basis $\A$. Since $j$ is an
injection and $\RORel{\kappa}{a}{b}$ is a basis generator, the image $j(\kappa)$ needs to be non-zero along the
interval $[a,b)$. Thus if $\barSum_{\alpha \in \A(\kappa)} (k_{\alpha, \kappa}  \alpha)$ is associated to the interval $[A,B)$,
then  by injectivity $b \leq B$. On the other hand, since the image $j(\kappa)$ is associated to $[A,B)$, then $B \leq b$ by naturality of $j$,
whence we obtain the equality $B = b$. Since $\A(\kappa)$ is finite,
there must exist some $\alpha \in \A(\kappa)$ with
death value $b_\alpha = b$.

The image of $f$, which will be denoted as $\Img(f)$, is a persistence
module together with a projection  $\surjmorph{q}{\V}{\Img(f)}$, such that
$\Img(f)^r \simeq \Img(f^r)$ for all $r \in \bR$. Let $\A$ be a basis for $\V$ and
$\I$ be a basis for $\Img(f)$. Then for each generator $\gamma \in \I$ with
$\RORel{\gamma}{a}{b}$, there exist
coefficients $c_{\alpha, \gamma} \in \F$ such that
$$
\gamma = \bigBarSum_{\alpha \in \A} c_{\alpha, \gamma} \ q(\alpha).
$$
Notice that there was no need to multiply the above expression by $\bOne{a}$, since $\gamma$ is in the
image of $f$. Thus, by finiteness of $\cA$,  there must exist some $\alpha \in \cA$ such that $q(\alpha)$ has birth value $a$.
Additionally, we will have an inclusion $\iota : \Img(f) \hookrightarrow \W$ such that $f = \iota \circ q$.
Notice that
 $\injmorph{\iota}{\Img(f)}{\W}$ being an inclusion,  will have properties analogous to those discussed
 for the kernel.
Hence, there will be
coefficients $e_{\beta, \gamma} \in \F$ satisfying the equation:
$$
\iota(\gamma) = \bOnePar{a}{\bigBarSum_{\beta \in \B} e_{\beta, \gamma} \beta}.
$$
for each $\gamma \in \I$ with
$\RORel{\gamma}{a}{b}$.
Putting these two together and considering the image of $f$ in terms of $\B$, that is the equality $\left(f(\alpha)\right)_{\A} =
\left(   \beta \right)_{\B } \left(b_{\beta, \alpha} \right)_{\B \times \A}$,
we get the matrix equation:
$$
\left( e_{\beta, \gamma} \right)_{\B \times \cI} =
\left(b_{\beta, \alpha} \right)_{\B \times \A} \left( c_{\alpha, \gamma} \right)_{\A \times \cI}.
$$
In general, we start from the matrix $(b_{\beta, \alpha})_{\B \times \A}$ and will proceed to find
the coefficients $e_{\beta, \gamma}$ and $c_{\alpha, \gamma}$. This will be done by a process very similar to a Gaussian elimination.
Each non-zero column $\left( e_{\beta, \gamma} \right)_{\B}$ will lead to a barcode generator of the image. Its counterpart
$\left( c_{\alpha, \gamma} \right)_{\A}$ will lead to a basis for
 the kernel of $f$, although we will need to perform an additional Gaussian elimination.
See Figure~\ref{fig:decomposition-f-barcodes} for an illustration of these concepts.
Some of these observations have already been studied in \citep{BauerLesnick2015}.

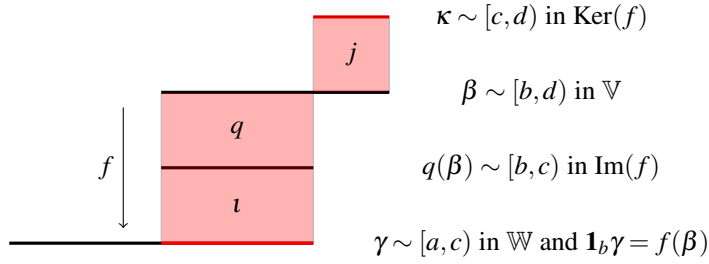
\begin{figure}
    \begin{center}
        \begin{tikzpicture}
        \draw[very thick] (0,0) -- (4,0);
        \draw[very thick] (2,1) -- (4,1);
        \draw[very thick] (2,2)--(5,2);
        \draw[very thick, color = red] (4,3)--(5,3);
        \draw[very thick] node (I1) at (7,3) {$\RORel{\kappa}{c}{d}$ in $\Ker(f)$};
        \draw[very thick] node (I) at (7,2) {$\RORel{\beta}{b}{d}$ in $\V$};
        \draw node at (7,1) {$\RORel{q(\beta)}{b}{c}$ in $\Img(f)$};
        \filldraw[fill= red, opacity = 0.3] (2,2) -- (4,2) -- (4,0) -- (2,0) -- cycle;
        \filldraw[fill= red, opacity = 0.3] (4,2) -- (5,2) -- (5,3) -- (4,3) -- cycle;
        \draw[very thick, color= red] (2,0) -- (4,0);
        \draw (7,0) node {$\RORel{\gamma}{a}{c}$ in $\W$ and $\bOne{b}\gamma = f(\beta)$};
        \draw node at (4.5, 2.5) {$j$};
        \draw node at (3, 1.5) {$q$};
        \draw node at (3, 0.5) {$\iota$};
        \draw[->] (1.5,1.8)--(1.5,0.2) node[midway, left] {$f$};
        \end{tikzpicture}
        \caption{Barcodes related to $f$ with parameters $a < b < c < d$ in $\bR$.}
        \label{fig:decomposition-f-barcodes}
    \end{center}
\end{figure}

A point to notice is that
there is a natural ordering for $\B$. For any pair of barcode generators $\RORel{\alpha}{a}{b}$ and
$\RORel{\beta}{c}{d}$, we will write $\alpha < \beta$ whenever $a < c$ or  when we have that $a = c$ and $d < b$.
As before, consider two finite barcode bases
$\A = \finset{\alpha}{i}{0}{n}$ and $\B = \finset{\beta}{j}{0}{m}$ for $\V$ and
$\W$ respectively.
Additionally, suppose that both $\A$ and $\B$ have
 total orderings. That is, even if two barcode generators are associated to the same interval $\RORel{\al{1},\al{2}}{a}{b}$,
 we have already made a choice $\al{1} < \al{2}$.
 Then we consider $M = \left( f(\alpha_1), \ldots, f(\alpha_n) \right)$ the matrix of $f$ in the bases $\A$ and $\B$.
The aim will be to transform $M$ performing left to right column additions so that we obtain a matrix
$$
\I = \left(\, 
    f(\al{1}) \,\middle\vert\, 
    f(\al{2}) \barSum k_{2,1}f(\al{1})  \,\middle\vert\,
    \ldots  \,\middle\vert\,
    f(\alpha_n) \barSum \bigBarSum_{i = 1}^{n-1} k_{n, i} f(\al{i}) 
    \,\right)
$$
for suitable $k_{i,j} \in \F$ and $0 \leq i < j \leq n$. This $\I$ will have the property that 
its non-zero columns form a basis for $\Img(f)$.  Also, we can find coefficients $q_{i,j} \in \F$ and $c_j \in \F$ for all $0 \leq i < j \leq n$,
such that the set
$$
\K = \left\{\bOnePar{c_j}{\alpha_j \barSum  \bigBarSum_{i = 1}^{j-1} q_{j, i} \alpha_i}\right\}_{0 \leq j \leq n}
$$
forms a basis for $\Ker(f)$. In the following we will present an
algorithm obtaining such bases. First we will go through an illustrative example encoding some of the basic principles of the procedure.

\begin{example}
Consider two persistence modules
 $$
 \V \simeq \GInt{1}{5} \oplus \GInt{1}{4} \oplus \GInt{2}{5}, \hspace{0.5cm}
 \W \simeq \GInt{0}{5} \oplus \GInt{0}{3} \oplus \GInt{1}{4}
 $$
 with barcode bases $(\al{1},\al{2},\al{3})$ and $(\be{1},\be{2},\be{3})$ respectively. Let the
 morphism $\morph{f}{\V}{\W}$ be given by the $\B \times \A$ matrix:
 $$
 F = \left(\begin{array}{c|ccc}
 \  & \al{1} & \al{2} & \al{3} \\ \hline
 \be{1} & 0 & 0 & 1\\
 \be{2} & 1 & 0 & 0 \\
 \be{3} & 1 & 1 & 1
 \end{array}\right).
 $$
 Then we will have matrices associated to $f$ which are constant between pairs of consecutive parameters in
 $-\infty < 0 < 1 < 2 < 3 < 4 < 5 < \infty$. Since $f$ is zero along $(-\infty, 1)$, we start considering
 the matrix associated to $[1,2)$, together with its reduction by columns,
 $$
F^1 =
 \left(\begin{array}{c|cc}
 \ & \al{1} & \al{2} \\ \hline \be{1} & 0 & 0 \\ \be{2} & 1 & 0\\ \be{3} & 1 & 1
 \end{array}\right), \hspace{1cm} {\rm reduced} \rightarrow
 R(F^1) =
 \left(\begin{array}{c|cc}
 \ & \al{1} & \al{2}-\al{1} \\ \hline \be{1} & 0 & 0 \\ \be{2} & 1 & -1 \\ \be{3} & 1 & 0
 \end{array}\right).
 $$
 Next we consider $F^2$ along
 the interval $[2,3)$, which will inherit the previous reduction. Since a generator on the domain is being born,
 we add a new column at the right end of $F^2$.  This will be
 reduced by subtracting the first two columns from the last one,
 $$
 F^2 = \left(\begin{array}{c|ccc}
 \  & \al{1} & \al{2}-\al{1} & \al{3} \\ \hline
 \be{1} & 0 & 0 & 1\\
 \be{2} & 1 & -1 & 0 \\
 \be{3} & 1 & 0 & 1
 \end{array}\right), \hspace{1cm} {\rm reduced} \rightarrow
 R(F^2) =
 \left(\begin{array}{c|ccc}
 \  & \al{1} & \al{2}-\al{1} & \al{3}-\al{2} \\ \hline
 \be{1} & 0 & 0 & 1\\
 \be{2} & 1 & -1 & 0 \\
 \be{3} & 1 & 0 & 0
 \end{array}\right).
 $$
 Now, we compute the matrix
 $F^3$ of $f$ along $[3,4)$.
 We start from $R(F^2)$ and we take out the second row,
 since its associated interval ends at $3$. Thus, we obtain $F^3$ which is already reduced,
 $$
 F^3 =
 \left(\begin{array}{c|ccc}
 \ & \al{1} & \al{2}-\al{1} & \al{3}-\al{2} \\ \hline
 \be{1} & 0 & 0 & 1 \\ \be{3} & 1 & 0 & 0
 \end{array}\right).
 $$
 Since the second column is zero this means that a barcode has finished on the image. Thus, we add
 $f(\al{2}-\al{1}) = -\bOne{1}\beta_2$ into $\I$. Additionally, we add $\bOnePar{3}{\al{2}-\al{1}}$ into $\K$.
 The next interval to consider is $[4,5)$. Now,
 before looking at the matrix $F^4$ of $f$ along $[4, 5)$, we consider $\bOne{4}(\K)$. That is, we look at the element $\bOne{4}(\alpha_2 - \alpha_1) = -\bOne{4}(\alpha_1)$.
 This already tells us extra information about the kernel of $F^4$. We check this when we compute $F^4$,
  $$
 F^4 =
 \left(\begin{array}{c|ccc}
 \ & \al{1} & \al{3}-\al{2} \\ \hline
 \be{1} & 0 & 1
 \end{array}\right).
 $$
 Notice that we do not need to add  $\bOne{4} (\al{1})$ into $\K$, since $\bOne{4} (\al{2}-\al{1}) = -\bOne{4}(\al{1}) \in \bOne{4}(\K)$.
 On the other
 hand, we add $f(-\alpha_1)= -\bOnePar{1}{\be{2} + \be{3}}$ into $\I$. Again, this is because a barcode generator has finished in the image of $f$. The reason why we are adding $f(-\alpha_1)$ instead of $f(\alpha_1)$ to $\I$, is because we detected this barcode from $\bOne{4}(\K)$. 
 Finally, since all generators in $\A$ die at $5$, we add $f(\al{3}-\al{2}) = \bOnePar{2}{\be{1}}$ into $\I$. Altogether we
 have obtained a basis for the kernel
 $$
 \K = \big\{\, \bOnePar{3}{\al{2}-\al{1}} \,\big\},
 $$
 and also a basis for the image:
 $$
 \I = \big\{\, -\bOnePar{1}{\be{2}},\;\; -\bOnePar{1}{\be{2} + \be{3}},\;\; \bOnePar{2}{\be{1}}\,\big\}.
 $$
Therefore we obtain isomorphisms $\Ker(f) \simeq \GInt{3}{5}$ and
$\Img(f) \simeq \GInt{1}{3} \oplus \GInt{1}{4} \oplus \GInt{2}{5}$ with respective barcode bases
 $\K$ and $\I$. This is illustrated on Figure~\ref{fig:PHEx-morphism-ker-im}. In practise, 
 instead of adding elements to $\I$, we will set $\I$ to be equal to $f(\A)_\B$ and perform
 the corresponding reductions until we obtain a basis for the image of $f$. 
 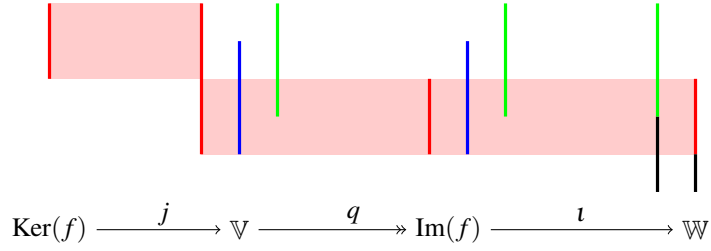
\begin{figure}
    \begin{center}
        \begin{tikzpicture}
        \fill[red, opacity = 0.2] (0,1.5) -- (2,1.5)--(2,2.5)--(0,2.5)--cycle;
        \fill[red, opacity = 0.2] (8.5,1.5) -- (2,1.5)--(2,0.5)--(8.5,0.5)--cycle;
        \draw[very thick, red] (0,1.5) -- (0,2.5);
        \node (ker) at (0, -0.5) {$\Ker(f)$};
        \draw[very thick, red] (2,0.5) -- (2,2.5);
        \draw[very thick, blue] (2.5,0.5) -- (2.5,2);
        \draw[very thick, green] (3,1) -- (3,2.5);
        \node (v) at (2.5,-0.5) {$\V$};
        \draw[very thick, red] (5,0.5) -- (5,1.5);
        \draw[very thick, blue] (5.5,0.5) -- (5.5,2);
        \draw[very thick, green] (6,1) -- (6,2.5);
        \node (im) at (5.25,-0.5) {$\Img(f)$};
        \draw[very thick] (8,0) -- (8,1);
        \draw[very thick, green] (8,1) -- (8,2.5);
        \draw[very thick, red] (8.5,0.5) -- (8.5,1.5);
        \draw[very thick] (8.5,0) -- (8.5,0.5);
        \draw[very thick, blue] (9,0.5) -- (9,2);
        \node (w) at (8.5,-0.5) {$\W$};
        \node (j) at (1.5, -0.3) {$j$};
        \draw[->] (ker) -- (v);
        \node (q) at (4, -0.3) {$q$};
        \draw[->>] (v) -- (im);
        \node (i) at (7, -0.3) {$\iota$};
        \draw[->] (im) -- (w);
        \end{tikzpicture}
        \caption{Decomposition of barcodes in image, kernel, domain and codomain of $f: \V \rightarrow \W$. The
        colors correspond to the different generators associated to $\I$ and $\K$.}
        \label{fig:PHEx-morphism-ker-im}
    \end{center}
\end{figure}
\end{example}

\subsection{Algorithm}

Here, we present an algorithm performing the above procedure. Suppose that $\morph{f}{\V}{\W}$ is a
morphism between two tame persistence modules. Let $\A$ and $\B$ be barcode bases for $\V$ and $\W$ respectively.
Suppose also that we know $f(\A)_{\B}$, the  matrix associated to $f$ with respect to barcode bases $\A$ and $\B$. 
We want to find a barcode basis for the image $\I$, and a barcode basis for the kernel $\K$.
In order to achieve this, $\cI$ will  start being set to be equal to the $|\B|\times |\A|$ matrix $f(\A)_\B$. 
Performing left to right column additions will lead to the nonzero columns of $\I$ forming a basis for the image. 
On the other hand, $\K$ will be a matrix with $|\A|+1$ rows and whose number of columns will `grow' as 
the computations develop. 
The extra row will be used for storing the parameter of the multiplying step function. 
Notice that $\K$ will have at most $|\A|$ columns, which is useful to know if 
we wanted to preallocate space for speed. 

Notice that there exist values $ -\infty = a_0 < a_1 <  \cdots < a_{n+1} = \infty$ such that $f$ is constant along $[a_i, a_{i+1})$ for each $0 \leq i \leq n$.
We start by computing the values $a_i$ for all $0 \leq i \leq n$.
We will denote by $\A^{a_i}(j)$ the index $1 \leq \A^{a_i}(j) \leq |\A|$ of the $j$-element from $\A^{a_i}$.
Also given a matrix $A$, we will denote by $A[j]$ the $j^{\rm th}$ column of $A$. 
The matrices $R^i$ will denote the successive Gaussian reductions as we increase the parameter $0 \leq i \leq n+1$. 
That is, we start with $\widetilde{R}^0$ which will be
the $|\B^{a_0}|\times |\A^{a_0}|$-matrix of $f$ along the interval $a_0 < a_1$, 
then we reduce it to $R^0$. 
Simultaneously, we perform exactly the same transformations to $\I$. 
In order to track these additions performed, we will use a $|\A|\times |\A|$ matrix $T$. This $T$ 
 will be the identity matrix $\Id_{|\A|}$. 
Thus, whenever we add columns in $R^0$ we perform the same additions in $T$. 
On the other hand, if some column $R^0[j]$ becomes zero, where $1 \leq j \leq |\A^{a_0}|$, 
we add $T[\A^{a_0}(j)]$ at the right end of the matrix of kernels $K^0$. 
Additionally, we append $T[\A^{a_0}(j)]$ to $\K$, with associated step function coefficient $a_0$. 
Since we require $\K$ to be linearly independent, we will introduce a
set $\texttt{pivots}$ for tracking the pivots of the elements in $\K$.  For each $T[\A^{a_0}(j)]$ that we add into $\K$, we add $\A^{a_0}(j)$ into $\texttt{pivots}$. Note that in this
first step there will be no repeated elements in $\texttt{pivots}$ and the matrix $\K$ will be already reduced. Once we finish, we jump to the next parameter $a_1$.

Let us go through the procedure for $a_1$. For this, we add or take out
rows and columns from $R^0$ and $K^0$ according to the life of each generator in $\A$ and $\B$; these changes are stored into $\widetilde{R}^1$ and $\widetilde{K}^1$, respectively.
Observe that $\widetilde{K}^1$ might not be reduced. 
Since we would like to obtain a basis for the kernel of $f$,
we reduce it further to $K^1=R(\widetilde{K}^1)$, performing the same additions on $\K$. 
Next we proceed to reduce $\widetilde{R}^1$. 
There is a trick we can use here to speed up the computations. 
For each $j$-column in $K^1$, if the pivot $p$ of the column is such that $\cA^{a_1}(p)$ is not in $\texttt{pivots}$, this means that the $p$ column in $\widetilde{R}^1$ will become zero after reducing.
Then we set $\widetilde{R}^1[p]$ to zero directly, substitute the column
$\I[\A^{a_1}(p)]$ by $f(K^{1}[j])$, and add $\cA^{a_1}(p)$ into $\texttt{pivots}$.
Here by $f(K^1[j])$ we mean the result after adding the columns from $f(\A)_\B$ with
coefficients given by $K^{1}[j]$. Notice that this is the same as performing left to right column additions to the column $\I[\A^{a_1}(p)]$, although 
we also permit this column be multiplied by a non-zero coefficient $t \in \F \setminus \{ 0\}$. 
After performing these preprocessing tasks, we reduce $\widetilde{R}^1$ into $R^1$, repeating the same transformations to  $T$ and $\I$.
Then we examine $R^1$, and look for columns $1 \leq j \leq |\A^{a_1}|$ of $R^1$, such that
$R^1[j] = 0$ and also $\A^{a_1}(j)$ is not in $\texttt{pivots}$. For each such column $j$, we 
append $T[\A^{a_1}(j)]$ at the right end of $K^1$, and also
into $\K$ with birth value $a_1$. Finally, we add $\A^{a_1}(j)$ into $\texttt{pivots}$. This finishes the iteration for $a_1$.

We repeat the previous step again for parameters $a_1 < a_2 < \cdots <a_n$.
On the $i$ iteration, where $2 \leq i \leq n$,  we assume
that  we have well defined matrices $R^{i-1}$ and $K^{i-1}$.
As before, we update these matrices into a  $\B^{a_{i}} \times \A^{a_{i}}$-matrix $\widetilde{R}^{i}$, and a matrix with $|\A^{a_{i}}|$ columns $\widetilde{K}^{i}$.
These updates are performed by adding and deleting columns as the barcodes from $\A$ and $\B$ are born or die respectively.
The rest of the procedure for $a_i$ is exactly as we outlined for $a_1$ earlier. Notice that while we are on the $i$th step, both
$K^i$ and $\K$ will have the same number of columns. 
An outline of this procedure is shown in Algorithm~\ref{cde:img-kernel}.

\begin{algorithm}
	\caption{\texttt{image\_kernel}}
	\hspace*{\algorithmicindent} \textbf{Input}: $\A$, $\B$, $f(\A)_{\B}$ \\
	\hspace*{\algorithmicindent} \textbf{Output}: $\K$, $\I$
	\begin{algorithmic}[1]
		\State Find values $a_0 < a_1 <  \cdots < a_n$ where a barcode generator dies, or is born in $\A$ or $\B$
		\State Set $\I = f(\A)_{\B}$, $\K = \emptyset$, $T = \Id_{|\A|}$, $R^{-1} = \emptyset$, $K^{-1} = \emptyset$, $\texttt{pivots} = \emptyset$
		\For{ $0 \leq i \leq n$}
			\State Update $\widetilde{R}^i$ and $\widetilde{K}^i$ from $R^{i-1}$, and $K^{i-1}$ respectively
			\State Reduce $\widetilde{K}^i$ obtaining $K^i$. Perform the same reductions to $\K$
			\For{each $j$-column of $K^i$ with pivot  $p$ such that $\cA^{a_i}(p) \notin \texttt{pivots}$}
				\State $\I[\A^{a_i}(p)] \gets f(K^i[j])$
				\State $\widetilde{R}^i[p] \gets 0$
				\State Add $\cA^{a_i}(p)$ into $\texttt{pivots}$
			\EndFor
			\State Reduce $\widetilde{R}^i$ into $R^i$. Perform the same reductions to $T$ and $\I$
			\For{$1 \leq j \leq |\A^{a_i}|$}
				\If{$R^i[j] = 0$ \& $\A^{a_i}(j) \notin \texttt{pivots}$}
					\State Append $T[\A^{a_i}(j)]$ at end of $K^i$, and also at $\K$ with step coefficient $a_i$
					\State Add $\A^{a_i}(j)$ into \texttt{pivots}
				\EndIf
			\EndFor
		\EndFor
		\State \Return $\K$ and $\I$ (optionally return $T$ for preimages)
	\end{algorithmic}
	\label{cde:img-kernel}
\end{algorithm}

\begin{proposition}
Algorithm~\ref{cde:img-kernel} computes $\K$ and $\I$ bases for the kernel and image of $f$. 
Furthermore, it takes at most $\cO(n M |\A|^2)$ time, where $M = \cmax(|\A|, |\B|)$.
\end{proposition}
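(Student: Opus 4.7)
The plan is to prove correctness by induction on the parameter index $i$, maintaining carefully chosen loop invariants that capture the meaning of the running matrices $T$, $R^i$, $K^i$, $\I$ and $\K$, and then separately bound the running time of each iteration.

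First I would make precise the invariants that hold at the end of iteration $i$. The matrix $T$ should be a change-of-basis record, so that for every index $j$ the column $T[j]$ expresses the current column of $R^i$ as a $\barSum$-combination $\bigBarSum_k T[j]_k \alpha_k$ of the domain basis; equivalently, the reduced matrix $R^i$ equals $f(\A)_{\B}\cdot T$ pointwise at $a_i$ after the appropriate restriction by $\bOne{a_i}$. The columns that have ever become zero have their index recorded in $\texttt{pivots}$, and for each such index the corresponding column of $\I$ stores $f$ applied to the finalised combination in $T$, so that $\I$ accumulates the image generator that was born and died by time $a_i$. The matrix $K^i$ is a reduced $\F$-basis for the pointwise kernel at parameter $a_i$ of the current reduced morphism, and $\K$ is the list of persistence kernel generators with birth values already discovered.

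The inductive step is where I would spend most of the effort. Between $a_{i-1}$ and $a_i$ some generators of $\A$ or $\B$ may be born or die; deleting rows of $R^{i-1}$ that correspond to dying $\beta$'s, deleting columns for dying $\alpha$'s, and appending new columns/rows for births yields $\widetilde{R}^i$ and $\widetilde{K}^i$. The crucial observation is that deleting a column from $K^{i-1}$ may break its echelon form, which is why we must re-reduce $\widetilde{K}^i$ into $K^i$; the same row-operations applied to $\K$ produce the correct $\bOne{a_i}$-restriction of the kernel generators by the properties of $\barSum$ and $\bOne{s}$ established earlier in Section~\ref{sec:persistence-modules}. Then, for every pivot $p$ of the new $K^i$ whose underlying index $\A^{a_i}(p)$ is fresh, the column $\widetilde{R}^i[p]$ is provably null, and the shortcut (lines 7--9) replaces it by $f(K^i[j])$, which is exactly the image representative of the bar that is about to die at this filtration value. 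After this preprocessing, a standard left-to-right Gaussian reduction of $\widetilde{R}^i$, mirrored on $T$ and $\I$, restores echelon form, and any freshly-zeroed column marks the birth of a new kernel generator, which we append to $K^i$ and to $\K$ with birth value $a_i$. Putting these pieces together re-establishes all invariants at parameter $a_i$.

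The main obstacle is the bookkeeping around the $\bOne{a_i}$ factors and the fact that kernel and image representatives must be consistent under the $\barSum$-operation rather than ordinary sum. What makes the argument go through is the identity $\bOne{a}\bOne{b} = \bOne{\max(a,b)}$ together with the restriction property of $\bOne{s}$ applied to a barcode vector, which is what allows the re-reduction of $\widetilde{K}^i$ to be interpreted simultaneously as genuine row operations on finite-dimensional kernels and as $\barSum$-operations on persistence kernel basis elements. Once this is in place, termination after the final $a_n$ gives that $\I$ consists of correct image generators for all bars in $\Img(f)$, and $\K$ of correct kernel generators, each with the birth value at which it was first appended.

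For the complexity bound I would simply count work per iteration. There are $n+1 \leq |\A|+|\B|+1$ critical values. In iteration $i$, the matrices $\widetilde{R}^i$ and $R^i$ have at most $|\B| \leq M$ rows and $|\A|$ columns, and $\widetilde{K}^i$, $K^i$ and $T$ have at most $|\A|$ rows and columns, while $\I$ has at most $|\B|\times|\A|$ entries. Left-to-right Gaussian elimination on an $M\times|\A|$ matrix, together with the mirrored row/column updates on $T$, $\I$ and $\K$, costs $\cO(M |\A|^2)$ field operations. Summing over the $n$ iterations yields the claimed bound $\cO(n M |\A|^2)$.
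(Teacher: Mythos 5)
Your proof follows the same line as the paper's: both argue that $\K^r$ (resp.\ $\I^r$) is a pointwise basis for $\Ker(f)^r$ (resp.\ $\Img(f)^r$) because the Gaussian eliminations performed at each critical value $a_i$ preserve generation and linear independence, and both obtain the complexity bound by charging $\cO(M|\A|^2)$ per iteration; you simply make the loop invariants and the induction explicit where the paper states them informally, and you correctly identify the identity $\bOne{a}\bOne{b}=\bOne{\max(a,b)}$ as what lets the reductions act simultaneously on the pointwise matrices and on the $\barSum$-structured barcode vectors (one terminological slip: the re-reduction of $\widetilde{K}^i$ mirrored on $\K$ is by column operations, not row operations). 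A small inaccuracy in an incidental remark: each barcode generator can contribute up to two critical values, so $n+1 \le 2(|\A|+|\B|)$ rather than $|\A|+|\B|+1$; this does not affect the stated $\cO(nM|\A|^2)$ bound since $n$ appears explicitly in it.
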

\begin{proof}
The key observation is that $\K$ forms a barcode basis for $\Ker(f)$ if and only if $\K^r$ is a 
basis for $\Ker(f)^r$ for all $r \in \R$. Now, notice that $\K^r$ generates $\Ker(f)^r$ since all kernel
elements were sent to $\K$. On the other hand, each $\K^r$ is a linearly independent set, since we
have performed Gaussian eliminations that ensured this. Similarly, for any $r \in \R$ we have that
$\I^r$ generates all the columns from $f(\A)_\B^r$, and thus it generates $\Img(f)^r$. We have also 
ensured linear independence of $\I^r$ by the Gaussian elimination process. Thus, $\I$ is a barcode
basis for $\Img(f)$. 
 
Let us compute the complexity of the algorithm. We start noticing that $n$ comes from the outer loop. Then the Gaussian reduction of $\widetilde{K}^i$ might take at most
$\cO(|\A|^3)$ time. On the other hand the reduction of $\widetilde{R}^i$ might take $\cO(|\B||\A|^2)$ time.
	The first inner loop will  take less than $\cO(|\A|(\clog(|\A|) + |\A| |\B|))$ time, where the multiplying $|\A|$ comes from the iteration.
	Within round brackets, the first term comes from checking \texttt{pivots} by a hash table or similar, whereas the second comes from computing $f(K^i[j])$. The second inner loop takes $\cO(|\A|\clog(|\A|))$ time, where $|\A|$ is for the iteration and $\clog(|\A|)$ for checking \texttt{pivots}. 
Putting all together we obtain the following complexity:
\begin{eqnarray*}
    &   n\Big(\cO\big(\,|\A|^3\,\big) + \cO\big(\,|\B||\A|^2\,\big) + 
        \cO\big(\,|\A|(\clog(|\A|) + |\A||\B|)\,\big) + \cO\big(\,|\A|\clog(|\A|)\,\big)\Big) \\
    & {}=n\cO\big(\,M|\A|^2\,\big) = \cO\big(\,n M |\A|^2\,\big),
\end{eqnarray*}
where $M = \cmax\big(\,|\A|, |\B|\,\big)$.
\end{proof}

Notice that this estimate $\cO(n M |\A|^2)$ can be improved in practice, since most of the values $a_i$ will only indicate a single birth or death on either the image or the kernel. Coming up with an efficient algorithm for this task is an interesting question that goes beyond the scope of this paper.

\subsection{Computing Quotients}
\label{sec:persQuotients}

Now we consider the problem of computing quotients.
Suppose that we have inclusions $\bH \subseteq \bG \subseteq \V$ of finite persistence modules of  dimensions $H \leq G \leq B$ respectively.
Furthermore, suppose that $\cH = \finset{h}{j}{1}{H}$, $\cG =  \finset{g}{k}{1}{G}$ and $\B =  \finset{b}{i}{1}{B}$ are barcode bases
 for $\bH$, $\bG$ and $\V$ respectively. The aim will be to find a barcode basis for $\bG / \bH$. For each generator $h_j \in \cH$, we will use the superscript
 notation $\RORel{h_j}{a_j^h}{b_j^h}$ for the associated interval. Also $\cH$ will be ordered in a way such that $a^h_i \leq a^h_j$ whenever $1 \leq i \leq j \leq H$. The same conventions will
 be used for the bases $\B$ and $\cG$. Then there exists a matrix $M = (m_{i,j})_{H,B} \in \cM_{H \times B}(\F)$ such that
 $$
 \left(\begin{array}{c}
 	h_1 \\ h_2 \\ h_3 \\ \vdots \\ h_H
 \end{array} \right) =
 \left(\begin{array}{cccc}
 	\bOne{a_1^h} & 0 & \cdots & 0 \\
 	0 & \bOne{a_2^h} & \cdots & 0 \\
 	\vdots & \vdots & \ddots & \vdots \\
 	0 & 0  & \cdots & \bOne{a_H^h}
 \end{array} \right) \cdot
 \left(\begin{array}{cccc}
 	m_{1,1} & m_{1,2} & \cdots & m_{1,B} \\
 	m_{2,1} & m_{2,2} & \cdots & m_{2,B} \\
 	\vdots & \vdots & \ddots & \vdots \\
 	m_{H,1} & m_{H,2} & \cdots & m_{H,B} \\
 \end{array}\right) \cdot
 \left(\begin{array}{c}
 	b_1 \\ b_2 \\ b_3 \\ \vdots \\ b_B
 \end{array} \right)
 $$
 where the operation $\barSum$ is implicit on the equation. We will write this in the more compact form $h = \bOne{\cH} M b$.
 Similarly, there exists a matrix $N \in \cM_{G \times B}(\F)$ such that $g = \bOne{\cG} N b$.

Consider the inclusions 
$\iota_\bH : \bH \hookrightarrow \bV$ and
$\iota_\bG : \bG \hookrightarrow \bV$, and 
define the morphism $\cR : \bH \times \bG \rightarrow \bV$
by $\cR \coloneqq \iota_\bH + \iota_\bG$. Thus we have that:
$$
\left( 	\cR(\cH \mid \cG)			\right)_{\B} = 
\left(	\iota_\bH (\cH) \mid \iota_\bG (\cG)	\right)_{\B} =
\left(	M^T \bOne{\cH} \mid N^T \bOne{\cG} 	\right) =
\left(	(\bOne{\cH} M)^T \mid (\bOne{\cG} N)^T 	\right) = 
\left(	h^T \mid g^T 				\right)_{\B}
$$
Hence, in order to compute a basis for the quotient, all that we need to do is apply
\texttt{image\_kernel} to the matrix $\left(	h^T \mid g^T \right)_{\B}$. 
The last $|\cG|$ nontrivial generators from $\I$ lead to a basis for 
$\bG/\bH$. 

\subsection{Homology of Persistence Modules}
\label{sub:persistence-module-homology}

Consider a chain of tame persistence modules:
\begin{equation}
\label{eq:chainPersMod}
\xymatrix{
0 &
\bV_0 \ar[l] &
\bV_1 \ar[l]_{d_1} &
\bV_2 \ar[l]_{d_2} &
\cdots \ar[l]	&
\bV_n \ar[l]_{d_n},
}
\end{equation}
where each term has basis $\B_j$ for $0 \leq j \leq n$.
Then applying \texttt{image\_kernel} we will obtain bases $\I_{j-1}$ and $\K_j$ for the image and kernel of
$d_j$ for all $0 \leq j \leq n$. Proceeding as on the previous section, we consider matrices $(\cR_j(\I_j \mid \K_j))_{\B_j}$ and apply again \texttt{image\_kernel}.
This leads to bases $\cQ_j$ for the homology for all $0 \leq j \leq n$. 

\section{A review on the Mayer-Vietoris spectral sequence}
\label{sec:MayerVietorisSS}

In this section, we give an introduction to the Mayer-Vietoris spectral sequence. 
This section has no claims of originality.
These ideas come mainly from \citep{BoTu1982, McCleary2001}. 
The reason for including this is because we think it beneficial to outline a minimal, 
self-contained explanation of the procedure.
Also, we will be using this as a necessary background for Section~\ref{sec:persistence-mayer-vietoris}. 
For simplicity we will focus on ordinary homology over a field $\F$. 
Later on we will extend these ideas to the case of persistent homology over a field. 

Let $K$ be a simplicial complex, and $\U=\{ U_i\}_{0 \leq i \leq m}$ be a cover of $K$ by
subcomplexes. Suppose that we want to compute the homology of $K$ from the cover elements. 
Then a naive approach to solving the problem, would be to compute
the homology groups $\Ho_n(U_i)$, and proceed by adding all of them back together:
\begin{equation}
\label{eq:dummy_sum}
\Ho_n(K) = \bigoplus_{0 \leq i \leq m} \Ho_n(U_i).
\end{equation}
Unfortunately, this is hardly ever true and we will need to find other ways of
dealing with this merging of information. To introduce the distributed problem, we
forget about simplicial complexes, and go back to the domain of
topological spaces and open covers.

\subsection{The Mayer-Vietoris theorem}

Consider torus $\bT^2$ covered by two cylinders $U$ and $V$, as illustrated in
Figure~\ref{fig:two-covers-torus}. 
Then one sees that equality~(\ref{eq:dummy_sum}) does not hold 
in dimensions $0$ and~$2$:
$$
\Ho_0(\bT^2) = \F \ncong \F \oplus \F = \Ho_0(U) \oplus \Ho_0(V), \hspace{2cm}
\Ho_2(\bT^2) = \F \ncong 0 = \Ho_2(U) \oplus \Ho_2(V). 
$$
In order to amend this, one has to look at the information given by the intersection $U\cap V$. 
This information comes as \emph{identifications} and new \emph{loops}.
For example, $U$ and $V$ are connected through the intersection. Also, the loop going around each cylinder
$U$ and $V$ is identified in the intersection. These identifications are performed by taking the quotient
$$
I_n \coloneqq {\rm coker}\Big(\, \Ho_n(\,U\cap V\,)\rightarrow \Ho_n(U) \oplus \Ho_n(V) \,\Big)
$$
for all $n \geq 0$. 
Where the previous morphism is the \cech~differential $\delta_1^n : S_n(U\cap V) \rightarrow S_n(U) \oplus S_n(V)$. Additionally, the $1$-loops in the intersection merge to the same loop when included in each cylinder $U$ or $V$. This situation creates a $2$-loop or `void', see
Figure~\ref{fig:two-covers-torus}. Thus we have the $n$-loops detected by the kernel
$$
L_n \coloneqq \Ker\Big(\, \Ho_{n-1}(U\cap V) \rightarrow \Ho_{n-1}(U) \oplus \Ho_{n-1}(V) \,\Big)
$$
for all $n \geq 0$.  Notice that $n$-loops are found by $n-1$ information on the intersection. Putting all together, we have that 
$$
\Ho_0(\bT^2) \cong I_0 \cong \F, \qquad
\Ho_1(\bT^2) \cong I_1 \oplus L_1 \cong \F \oplus \F, \qquad
\Ho_2(\bT^2) \cong L_2 \cong \F.
$$
This leads to the expected result
$$
\Ho_k(\bT^2) \cong 
\begin{cases}
\F              \qquad  &\mbox{for $k = 0, 2$,} \\
\F \oplus \F    \quad  &\mbox{for $k = 1$,} \\
0               \qquad  &\mbox{otherwise.} 
\end{cases}
$$ 
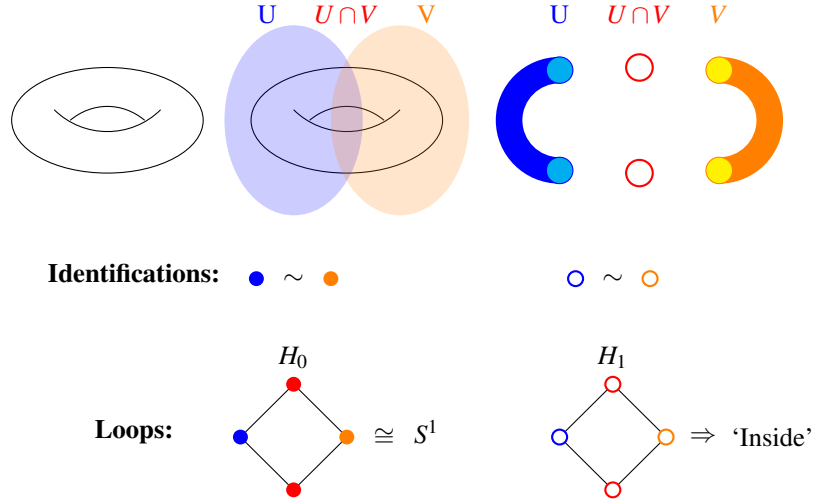
\begin{figure}
    \begin{center}
	\begin{tikzpicture}[scale=0.7]
	    \begin{scope}
    		\draw (-0.5,0) ellipse (1.8cm and 1cm);
    		\draw (-1.5,0.2) to[out=315,in=225] (0.5,0.2);
    		\draw (-1.2,0) to[out=40,in=140] (0.2,0);
	    \end{scope}
	    \begin{scope}[xshift=4cm]
    	        \draw (0,0) ellipse (1.8cm and 1cm);
    		\draw (-1,0.2) to[out=315,in=225] (1,0.2);
    		\draw (-0.7,0) to[out=40,in=140] (0.7,0);
    	    	\node[color=blue] at (-1.5,2) {U};
    	    	\fill[color=blue, opacity=0.2] (-1,0) ellipse (1.3 and 1.8);
    	    	\node[color=red] at (0,2) {$U\cap V$};
    	    	\fill[color=orange, opacity=0.2] (1,0) ellipse (1.3 and 1.8);
    	    	\node[color=orange] at (1.5,2) {V};
	    \end{scope}
	    \begin{scope}[xshift=8cm]
    	    	\node[color=blue] at (0,2) {U};
    	    	\fill[color=blue, thick] (90:1.2) arc (90:270:1.2) -- ++(270:-0.5) -- (-90:0.7) arc (-90:-270:0.7) -- cycle;
    	    	\filldraw[fill=cyan, draw=blue] (-90:0.95) circle (0.25);
    	    	\filldraw[fill=cyan, draw=blue] (90:0.95) circle (0.25);
	    \end{scope}
	    \begin{scope}[xshift=9.5cm]
    	    	\node[color=red] at (0,2) {$U\cap V$};
    	    	\draw[color=red, thick] (0,1) circle (0.25);
    	    	\draw[color=red, thick] (0,-1) circle (0.25);
	    \end{scope}
	    \begin{scope}[xshift=11cm]
    	    	\node[color=orange] at (0,2) {$V$};
    	    	\fill[color=orange, thick] (90:1.2) arc (90:-90:1.2) -- ++(-90:-0.5) -- (-90:0.7) arc (-90:90:0.7) -- cycle;
    	    	\filldraw[draw=orange,fill=yellow] (-90:0.95) circle (0.25);
    	    	\filldraw[draw=orange,fill=yellow] (90:0.95) circle (0.25);
	    \end{scope}
	    \begin{scope}[yshift=-6cm]
    	    	\node at (0,0.1) {\bf Loops:};
	    \end{scope}
	    \begin{scope}[xshift=2cm, yshift=-6cm]
	        \node at (1,1.5) {$H_0$};
    	    	\draw (0,0) -- (1,1) -- (2,0) -- (1,-1) -- cycle;
    	    	\fill[blue] (0,0) circle (4pt);
    	    	\fill[red] (1,1) circle (4pt);
    	    	\fill[red] (1,-1) circle (4pt);
    	    	\fill[orange] (2,0) circle (4pt);
    	    	\node at (2.7,0) {$\cong$};
    	    	\node at (3.5,0.1) {$S^1$};
	    \end{scope}
	    \begin{scope}[xshift=8cm, yshift=-6cm]
	        \node at (1,1.5) {$H_1$};
    	    	\draw (0,0) -- (1,1) -- (2,0) -- (1,-1) -- cycle;
    	    	\filldraw[draw=blue, fill=white, thick] (0,0) circle (4pt);
    	    	\filldraw[draw=red, fill=white, thick] (1,1) circle (4pt);
    	    	\filldraw[draw=red, fill=white, thick] (1,-1) circle (4pt);
    	    	\filldraw[draw=orange, fill=white, thick] (2,0) circle (4pt);
    	    	\node at (2.7,0) {$\Rightarrow$};
    	    	\node at (4,0) {`Inside'};
	    \end{scope}
	    \begin{scope}[yshift=-3cm]
    	    	\node at (0,0.1) {\bf Identifications:};
	    \end{scope}
	    \begin{scope}[xshift= 2cm, yshift=-3cm]
    	    	\fill[blue] (0.3,0) circle (4pt);
    	    	\node at (1,0) {$\sim$};
    	    	\fill[orange] (1.7,0) circle (4pt);
	    \end{scope}
	    \begin{scope}[xshift= 8cm, yshift=-3cm]
    	    	\filldraw[draw=blue, fill=white, thick] (0.3,0) circle (4pt);
    	    	\node at (1,0) {$\sim$};
    	    	\filldraw[draw=orange, fill=white, thick] (1.7,0) circle (4pt);
	    \end{scope}
	\end{tikzpicture}
        \caption{Torus covered by a pair of cylinders $U$ and $V$.}
    	\label{fig:two-covers-torus}
    \end{center}
\end{figure}

On a more theoretical level, what we have presented here is commonly known as the Mayer-Vietoris theorem. 
We can think of each homology group $\Ho_n(U\cup V)$ as a filtered object,
$$
\{0\} = F_{-1}\big(\,\Ho_n(U\cup V)\,\big)
\subset  F_0\big(\,\Ho_n(U\cup V)\,\big)
\subset  F_1\big(\,\Ho_n(U\cup V)\,\big) = \Ho_n(U\cup V).
$$
Then,  the Mayer-Vietoris theorem gives us the expressions
for the different ratios between consecutive filtrations,
$$
	F_0\big(\,\Ho_n(U\cup V)\,\big)  = I_n, \qquad
	\dfrac{F_1\big(\,\Ho_n(U\cup V)\,\big) }{ F_0\big(\,\Ho_n(U\cup V)\,\big)}   = L_n.
$$
In particular, since we are working with vector spaces  we obtain 
$$
\Ho_n(U \cup V) \cong I_n \oplus L_n
$$
for all $n \geq 0$. 

The above discussion gives rise to the \emph{total} chain complex,
$$
\Tot_n (\cS_*)= S_n(V) \oplus S_n(U) \oplus S_{n-1}(U \cap V),
$$
with morphism $d^{\rm Tot}_n = (d, d, d - \delta_1)$ for all $n \geq 0$. Notice that the first two morphisms do not change components, whereas the
third encodes the `merging' of information. This last morphism is represented by red arrows on the diagram:
$$
\xymatrix@R=0.7cm{
\Tot_{n+1} (\cS_*) \ar[d]^{d_{n+1}^\Tot}  & \cong & 
S_{n+1}(U)  \oplus  S_{n+1}(V)  \ar[d]^{d_{n+1}} 
& \oplus 	& S_{n}(U \cap V) \ar@[red][d]^{d_{n}}  \ar@[red][lld]^{\delta_1} \\
\Tot_{n} (\cS_*) \ar[d]^{d_{n}^\Tot} & \cong & 
S_{n}(U)  \oplus  S_{n}(V) \ar@[red][d]^{d_{n}} 
& \oplus 	& S_{n-1}(U \cap V) \ar[d]^{d_{n-1}} \ar@[red][lld]^{\delta_1} \\
\Tot_{n-1} (\cS_*) & \cong & S_{n-1}(U)  \oplus  S_{n-1}(V) & \oplus 	& S_{n-2}(U \cap V) 
}
$$
where the rectangle of red arrows is commutative. In particular, this implies that 
 $d_n^{\rm Tot} \circ d_{n+1}^{\rm Tot} = 0$ for all $n\geq 0$.
Computing the homology with respect to the total differentials and using the previous characterization 
of $I_n$ and $L_n$, one obtains
$$
\Ho_n (\Tot_*(\cS_*))  \cong  I_n \oplus L_n \cong \Ho_n(K).
$$
This result will be further generalized in proposition~\ref{prop:convSS}.

\subsection{The Mayer-Vietoris spectral sequence}
\label{subsec:MVss}

After this digression, we move back to a simplicial complex $K$ with a covering
$\U=\{ U_i\}_{i=0}^m$ by subcomplexes. In this case,  we need 
to take into account all the intersections between different subcomplexes. We can
extend the intuition from the previous subsection, by recalling the definition 
of the $(n, \U)$-\cech chain complex given on the preliminaries. 
Stacking all these sequences on top of each other,  and also multiplying differentials in odd rows by $-1$,
we obtain a diagram:
$$
\xymatrix@C=0.6cm{
	&
	\ \ar[d]	&
	\ \ar[d]	&
	\ \ar[d]	&
	\ \ar[d]	&
	\\
	0		&
	S_2(K)	\ar[l]	\ar[d]^{d}	&
	\bigoplus \limits_{\sigma \in \Delta_0^m} S_2(U_\sigma) \ar[l]_(0.6){\delta_0}
  \ar[d]^{d} &
	\bigoplus \limits_{\sigma \in \Delta_1^m} S_2(U_\sigma) \ar[l]_{\delta_1}
  \ar[d]^{d} &
	\bigoplus \limits_{\sigma \in \Delta_2^m} S_2(U_\sigma) \ar[l]_{\delta_2}
  \ar[d]^{d} &
	\cdots \ar[l]
	\\
	0		&
	S_1(K)	\ar[l]	\ar[d]^{d}	&
	\bigoplus \limits_{\sigma \in \Delta_0^m} S_1(U_\sigma) \ar[l]_(0.6){-\delta_0}
  \ar[d]^{d} &
	\bigoplus \limits_{\sigma \in \Delta_1^m} S_1(U_\sigma)  \ar[l]_{-\delta_1}
  \ar[d]^{d} &
	\bigoplus \limits_{\sigma \in \Delta_2^m} S_1(U_\sigma) \ar[l]_{-\delta_2}
  \ar[d]^{d} &
	\cdots \ar[l]
	\\
	0		&
	S_0(K)	\ar[l]	\ar[d]&
	\bigoplus \limits_{\sigma \in \Delta_0^m} S_0(U_\sigma) \ar[l]_(0.6){\delta_0}
  \ar[d] &
	\bigoplus \limits_{\sigma \in \Delta_1^m} S_0(U_\sigma) \ar[l]_{\delta_1}
  \ar[d] &
	\bigoplus \limits_{\sigma \in \Delta_2^m} S_0(U_\sigma) \ar[l]_{\delta_2}
  \ar[d] &
	\cdots \ar[l]
	\\
	 & 0 & 0 & 0 & 0
}
$$

This leads to a \emph{double complex} $(\cS_{*,*}, \bar{\delta}, d)$ defined as 
$$
\cS_{p,q} \coloneqq \bigoplus \limits_{\sigma \in \Delta_p^m} S_q(U_\sigma)
$$
for all $p,q \geq 0$, and also $\cS_{p,q} \coloneqq 0$ otherwise.  We denote $\bar{\delta} = (-1)^q \delta$, the 
\cech~differential multiplied by a $-1$ on odd rows. The reason for this change of sign is because we want $\cS_{*,*}$ to be a double 
complex, in the sense that the following equalities hold:
\begin{equation}
\label{eq:double-complex-equations}
\bar{\delta} \circ \bar{\delta} = 0, \hspace{1cm}
d \circ d = 0, \hspace{1cm}
\bar{\delta} \circ d + d \circ \bar{\delta} = 0.
\end{equation}
Since $\cS_{*,*}$ is a double complex, we can study the associated chain complex $\cS_*^\Tot$, commonly known as the \emph{total complex}. This is 
formed by taking the sums of anti-diagonals
$$
\cS_n^\Tot \coloneqq \bigoplus \limits_{p+q = n} \cS_{p,q}
$$
for each $n\geq 0$. The differentials on the total complex are defined by $d^{\rm Tot} = d + \bar{\delta}$, 
which satisfy  $d^{\rm Tot} \circ d^{\rm Tot} = 0$ from 
equations~(\ref{eq:double-complex-equations}), see Figure~\ref{diag:totalComplex} for a depiction of this. 
Later, in proposition~\ref{prop:convSS},  we will prove that $\Ho_n(K) \cong \Ho_n(\cS^\Tot_*)$ for all 
$n \geq 0$. The problem still remains difficult, since computing $\Ho_n(\cS^\Tot_*)$ directly might be even harder than
computing $\Ho_n(K)$. The key is that there is a divide and conquer method which allows us to break apart the calculation
of $\Ho_n(\cS^\Tot_*)$ into small, computable steps. 
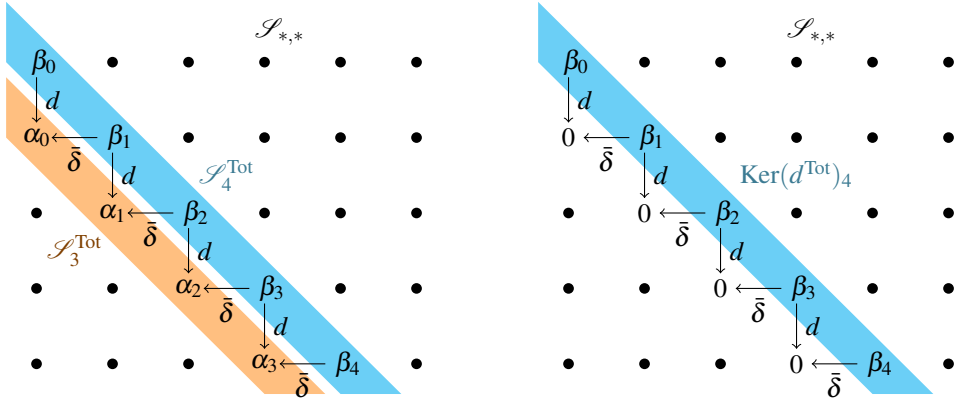
\begin{figure}
	\begin{center}
		\begin{tikzpicture}
		\foreach \x in {0,1,...,5}{
			\foreach \y in {0,1,...,4}{
        			\fill (\x,\y) circle (2pt);
        		}
		}
		\fill[cyan!50] (4,-0.4)--(4.8,-0.4)--(-0.4,4.8)--(-0.4,4)--cycle;
		\fill[orange!50] (3,-0.4)--(3.8,-0.4)--(-0.4,3.8)--(-0.4,3)--cycle;
		\draw[->] (3.8,0)--(3.2,0) node[midway, below] {$\bar{\delta}$};
		\draw[->] (2.8,1)--(2.2,1) node[midway, below] {$\bar{\delta}$};
		\draw[->] (1.8,2)--(1.2,2) node[midway, below] {$\bar{\delta}$};
		\draw[->] (0.8,3)--(0.2,3) node[midway, below] {$\bar{\delta}$};
		\draw[->] (0,3.8)--(0,3.2) node[midway, right] {$d$};
		\draw[->] (1,2.8)--(1,2.2) node[midway, right] {$d$};
		\draw[->] (2,1.8)--(2,1.2) node[midway, right] {$d$};
		\draw[->] (3,0.8)--(3,0.2) node[midway, right] {$d$};
		\node at (4.1,0) {$\beta_4$};
		\node at (3.1,1) {$\beta_3$};
		\node at (2.1,2) {$\beta_2$};
		\node at (1.1,3) {$\beta_1$};
		\node at (0.1,4) {$\beta_0$};
		\node at (3,0) {$\alpha_3$};
		\node at (2,1) {$\alpha_2$};
		\node at (1,2) {$\alpha_1$};
		\node at (0,3) {$\alpha_0$};
		\draw[orange!50!black] (0.5,1.5) node {$\cS^\Tot_3$};
		\draw[cyan!50!black] (2.5,2.5) node {$\cS^\Tot_4$};
		\draw (3.2,4.4) node {$\cS_{*,*}$};
		\begin{scope}[xshift=7cm]
		\foreach \x in {0,1,...,5}{
			\foreach \y in {0,1,...,4}{
        			\fill (\x,\y) circle (2pt);
        		}
		}
		\fill[cyan!50] (4,-0.4)--(4.8,-0.4)--(-0.4,4.8)--(-0.4,4)--cycle;
		\fill[white] (3,-0.4)--(3.8,-0.4)--(-0.4,3.8)--(-0.4,3)--cycle;
		\draw[->] (3.8,0)--(3.2,0) node[midway, below] {$\bar{\delta}$};
		\draw[->] (2.8,1)--(2.2,1) node[midway, below] {$\bar{\delta}$};
		\draw[->] (1.8,2)--(1.2,2) node[midway, below] {$\bar{\delta}$};
		\draw[->] (0.8,3)--(0.2,3) node[midway, below] {$\bar{\delta}$};
		\draw[->] (0,3.8)--(0,3.2) node[midway, right] {$d$};
		\draw[->] (1,2.8)--(1,2.2) node[midway, right] {$d$};
		\draw[->] (2,1.8)--(2,1.2) node[midway, right] {$d$};
		\draw[->] (3,0.8)--(3,0.2) node[midway, right] {$d$};
		\node at (4.1,0) {$\beta_4$};
		\node at (3.1,1) {$\beta_3$};
		\node at (2.1,2) {$\beta_2$};
		\node at (1.1,3) {$\beta_1$};
		\node at (0.1,4) {$\beta_0$};
		\node at (3,0) {$0$};
		\node at (2,1) {$0$};
		\node at (1,2) {$0$};
		\node at (0,3) {$0$};
		\draw[cyan!50!black] (3,2.5) node {$\Ker(d^\Tot)_4$};
		\draw (3.2,4.4) node {$\cS_{*,*}$};
		\end{scope}
		\end{tikzpicture}
	\end{center}
	\caption{$\cS_{*,*}$ represented as a lattice for convenience. On the left, the total complex $\cS^\Tot$ associated to $\cS_{*,*}$. 
	Here $(\beta_0, \ldots, \beta_4) \in \cS^\Tot_4$ maps to $(\alpha_0, \ldots, \alpha_3) \in \cS^\Tot_3$, where 
	$\alpha_i = d(\beta_i) + \bar{\delta}(\beta_{i+1})$ for all $0 \leq i \leq 3$. On the right, the kernel $\Ker(d^\Tot)_4$.}
	\label{diag:totalComplex}
\end{figure}

Let us start by computing the kernel $\Ker(d^\Tot_n)$, which is depicted in Figure~\ref{diag:totalComplex}. 
Recall that we will be  working with vector spaces and linear maps all throughout. 
Let $s = (s_{k,n-k})_{0 \leq k \leq n} \in \cS^\Tot_n$ be in $\Ker(d^\Tot_n)$. Then $s$ will 
satisfy the equations $d(s_{k,n-k}) = -\bar{\delta}(s_{k+1,n-k-1})$ for all $0 \leq k < n$. Thus, one 
can obtain kernel elements by considering  subspaces
$\GK_{p,q} \subseteq \cS_{p,q}$.  The subspace $\GK_{p,q}$ is composed of elements $s_{p,q} \in \cS_{p,q}$ such that $d(s_{p,q})=0$, and there exists a sequence $s_{p-r,q+r} \in \cS_{p-r,q+r}$ 
 satisfying equations $d(s_{p-r,q+r}) = -\bar{\delta}(s_{p-r-1,q+r+1})$ for all $0 < r \leq p$. Notice that $\GK_{p,q}$ is 
a subspace of $\cS_{p,q}$ since both $d$ and $\bar{\delta}$ are linear.  
We will see that one has (non-canonical) isomorphisms,
\begin{equation}\label{eq:non_canonical_kernel}
\Ker(d^\Tot_n) \cong \bigoplus \limits_{p+q=n} \GK_{p,q}.
\end{equation}
This is depicted in Figure~\ref{diag:totalKernel}.
It turns out that this is true only when we are working with vector spaces. 
Later, we will work with a more general case where such isomorphisms
do not hold. This will be known as the \emph{extension problem}. 
\begin{figure}
	\begin{center}
		\begin{tikzpicture}
		\begin{scope}[yshift=5cm]
			\foreach \x in {0,1,...,3}{
				\foreach \y in {0,1,...,3}{
        				\fill (\x,\y) circle (2pt);
        			}
			}
			\fill[white] (3,0) circle (3pt);
			\fill[white] (2,1) circle (3pt);
			\fill[white] (1,2) circle (3pt);
			\fill[white] (0,3) circle (3pt);
			\fill[white] (2,0) circle (3pt);
			\fill[white] (1,1) circle (3pt);
			\fill[white] (0,2) circle (3pt);
			\fill[cyan!50] (0.4,2.2)--(0.4,3)--(-0.4,3.8)--(-0.4,3)--cycle;
			\draw[blue!50!black, very thick] (0.4,2.2)--(0.4,3)--(-0.4,3.8)--(-0.4,3)--cycle;
			\node at (3.1,0) {$0$};
			\node at (2.1,1) {$0$};
			\node at (1.1,2) {$0$};
			\node at (2,0) {$0$};
			\node at (1,1) {$0$};
			\node at (0,2) {$0$};
			\node at (0.1,3) {$\beta_0$};
			\draw[->] (2.8,0)--(2.2,0) node[midway, below] {$\bar{\delta}$};
			\draw[->] (1.8,1)--(1.2,1) node[midway, below] {$\bar{\delta}$};
			\draw[->] (0.8,2)--(0.2,2) node[midway, below] {$\bar{\delta}$};
			\draw[->] (0,2.8)--(0,2.2) node[midway, right] {$d$};
			\draw[->] (1,1.8)--(1,1.2) node[midway, right] {$d$};
			\draw[->] (2,0.8)--(2,0.2) node[midway, right] {$d$};
			\node[blue!50!black] at (0.5,3.5) {$\GK_{0,3}$};
		\end{scope}
		\begin{scope}[yshift=5cm, xshift=5cm]
			\foreach \x in {0,1,...,3}{
				\foreach \y in {0,1,...,3}{
        				\fill (\x,\y) circle (2pt);
        			}
			}
			\fill[white] (3,0) circle (3pt);
			\fill[white] (2,1) circle (3pt);
			\fill[white] (1,2) circle (3pt);
			\fill[white] (0,3) circle (3pt);
			\fill[white] (2,0) circle (3pt);
			\fill[white] (1,1) circle (3pt);
			\fill[white] (0,2) circle (3pt);
			\fill[cyan!50] (1.4,1.2)--(1.4,2)--(-0.4,3.8)--(-0.4,3)--cycle;
			\draw[blue!50!black, very thick] (1.4,1.2)--(1.4,2)--(0.6,2.8)--(0.6,2)--cycle;
			\node at (3.1,0) {$0$};
			\node at (2.1,1) {$0$};
			\node at (2,0) {$0$};
			\node at (1,1) {$0$};
			\node at (0,2) {$0$};
			\node at (1.1,2) {$\beta_1$};
			\node at (0.1,3) {$\beta_0$};
			\draw[->] (2.8,0)--(2.2,0) node[midway, below] {$\bar{\delta}$};
			\draw[->] (1.8,1)--(1.2,1) node[midway, below] {$\bar{\delta}$};
			\draw[->] (0.8,2)--(0.2,2) node[midway, below] {$\bar{\delta}$};
			\draw[->] (0,2.8)--(0,2.2) node[midway, right] {$d$};
			\draw[->] (1,1.8)--(1,1.2) node[midway, right] {$d$};
			\draw[->] (2,0.8)--(2,0.2) node[midway, right] {$d$};
			\node[blue!50!black] at (1.5,2.5) {$\GK_{1,2}$};
		\end{scope}
		\begin{scope}
			\foreach \x in {0,1,...,3}{
				\foreach \y in {0,1,...,3}{
        				\fill (\x,\y) circle (2pt);
        			}
			}
			\fill[white] (3,0) circle (3pt);
			\fill[white] (2,1) circle (3pt);
			\fill[white] (1,2) circle (3pt);
			\fill[white] (0,3) circle (3pt);
			\fill[white] (2,0) circle (3pt);
			\fill[white] (1,1) circle (3pt);
			\fill[white] (0,2) circle (3pt);
			\fill[cyan!50] (2.4,0.2)--(2.4,1)--(-0.4,3.8)--(-0.4,3)--cycle;
			\draw[blue!50!black, very thick] (2.4,0.2)--(2.4,1)--(1.6,1.8)--(1.6,1)--cycle;
			\node at (3.1,0) {$0$};
			\node at (2,0) {$0$};
			\node at (1,1) {$0$};
			\node at (0,2) {$0$};
			\node at (2.1,1) {$\beta_2$};
			\node at (1.1,2) {$\beta_1$};
			\node at (0.1,3) {$\beta_0$};
			\draw[->] (2.8,0)--(2.2,0) node[midway, below] {$\bar{\delta}$};
			\draw[->] (1.8,1)--(1.2,1) node[midway, below] {$\bar{\delta}$};
			\draw[->] (0.8,2)--(0.2,2) node[midway, below] {$\bar{\delta}$};
			\draw[->] (0,2.8)--(0,2.2) node[midway, right] {$d$};
			\draw[->] (1,1.8)--(1,1.2) node[midway, right] {$d$};
			\draw[->] (2,0.8)--(2,0.2) node[midway, right] {$d$};
			\node[blue!50!black] at (2.5,1.5) {$\GK_{2,1}$};
		\end{scope}
		\begin{scope}[xshift=5cm]
			\foreach \x in {0,1,...,3}{
				\foreach \y in {0,1,...,3}{
        				\fill (\x,\y) circle (2pt);
        			}
			}
			\fill[white] (3,0) circle (3pt);
			\fill[white] (2,1) circle (3pt);
			\fill[white] (1,2) circle (3pt);
			\fill[white] (0,3) circle (3pt);
			\fill[white] (2,0) circle (3pt);
			\fill[white] (1,1) circle (3pt);
			\fill[white] (0,2) circle (3pt);
			\fill[cyan!50] (3,-0.4)--(3.8,-0.4)--(-0.4,3.8)--(-0.4,3)--cycle;
			\draw[blue!50!black, very thick] (3,-0.4)--(3.8,-0.4)--(2.6,0.8)--(2.6,0)--cycle;
			\node at (2,0) {$0$};
			\node at (1,1) {$0$};
			\node at (0,2) {$0$};
			\node at (3.1,0) {$\beta_3$};
			\node at (2.1,1) {$\beta_2$};
			\node at (1.1,2) {$\beta_1$};
			\node at (0.1,3) {$\beta_0$};		
			\draw[->] (2.8,0)--(2.2,0) node[midway, below] {$\bar{\delta}$};
			\draw[->] (1.8,1)--(1.2,1) node[midway, below] {$\bar{\delta}$};
			\draw[->] (0.8,2)--(0.2,2) node[midway, below] {$\bar{\delta}$};
			\draw[->] (0,2.8)--(0,2.2) node[midway, right] {$d$};
			\draw[->] (1,1.8)--(1,1.2) node[midway, right] {$d$};
			\draw[->] (2,0.8)--(2,0.2) node[midway, right] {$d$};
			\node[blue!50!black] at (3.5,0.5) {$\GK_{3,0}$};
		\end{scope}
		\begin{scope}[xshift=9cm,yshift=6cm]
			\fill[orange!50] (0.8,0.8) rectangle (2.2,1.2);
			\fill[orange!50] (1.8,-0.2) rectangle (2.2,1.2);
			\draw[orange!50!black, very thick] (1.8,0.8) rectangle (2.2,1.2);
			\node at (2,0) {$\alpha_2$};
			\node at (1,1) {$\alpha_1$};
			\node at (2,1) {$\beta_2$};
			\draw[->] (2,0.8) -- (2,0.2);
			\draw[->] (1.8 ,1) -- (1.2 ,1);
			\node[orange!50!black] at (1.6,2) {$\GZ_{2,1}^0$};
			\node[orange!50!black] at (1.6,1.5) {$= \cS_{2,1}$};
		\end{scope}
		\begin{scope}[xshift=11cm,yshift=6cm]
			\fill[orange!50] (0.8,0.8) rectangle (2.2,1.2);
			\draw[orange!50!black, very thick] (1.8,0.8) rectangle (2.2,1.2);
			\node at (2,0) {$0$};
			\node at (1,1) {$\alpha_1$};
			\node at (2,1) {$\beta_2$};
			\draw[->] (2,0.8) -- (2,0.2);
			\draw[->] (1.8 ,1) -- (1.2 ,1);
			\node[orange!50!black] at (1.4,2) {$\GZ_{2,1}^1$};
			\node[orange!50!black] at (1.8,1.5) {$= \Ker(d)_{2,1}$};
		\end{scope}
		\begin{scope}[xshift=10cm,yshift=3cm]
			\fill[orange!50] (-0.4,1.8)  rectangle (1.2,2.2);
			\fill[orange!50] (2.4,0.2)--(2.4,1)--(1.2,2.2)--(0.6,2)--cycle;
			\draw[orange!50!black, very thick] (2.4,0.2)--(2.4,1)--(1.6,1.8)--(1.6,1)--cycle;
			\node at (2,0) {$0$};
			\node at (1,1) {$0$};
			\node at (2.05,1) {$\beta_2$};
			\node at (1.05,2) {$\beta_1$};
			\node at (0,2) {$\alpha_0$};
			\draw[->] (2,0.8) -- (2,0.2);
			\draw[->] (1,1.8) -- (1,1.2);
			\draw[->] (1.8 ,1) -- (1.2 ,1);
			\draw[->] (0.8 ,2) -- (0.2 ,2);
			\node[orange!50!black] at (2.4,1.8) {$\GZ_{2,1}^2$};
		\end{scope}
		\begin{scope}[xshift=10cm]
			\fill[orange!50] (2.4,0.2)--(2.4,1)--(-0.4,3.8)--(-0.4,3)--cycle;
			\draw[orange!50!black, very thick] (2.4,0.2)--(2.4,1)--(1.6,1.8)--(1.6,1)--cycle;
			\node at (2,0) {$0$};
			\node at (1,1) {$0$};
			\node at (2.05,1) {$\beta_2$};
			\node at (1.05,2) {$\beta_1$};
			\node at (0,2) {$0$};
			\node at (0,3) {$\beta_0$};
			\draw[->] (2,0.8) -- (2,0.2);
			\draw[->] (1,1.8) -- (1,1.2);
			\draw[->] (0,2.8) -- (0,2.2);
			\draw[->] (1.8 ,1) -- (1.2 ,1);
			\draw[->] (0.8 ,2) -- (0.2 ,2);
			\node[orange!50!black] at (2.2,2.3) {$\GZ_{2,1}^3$};
			\node[orange!50!black] at (2.5,1.8) {$ = \GK_{2,1}$};
		\end{scope}
		\end{tikzpicture}
	\end{center}
	\caption{On the left, in cyan the four direct summands of $\Ker(d^\Tot)_4$. The corresponding  $\GK_{r,3-r}$ are framed to emphasize that
	they are respective subspaces of $\cS_{r,3-r}$ for all $0 \leq r \leq 3$. On the right, in orange the subspaces $\GZ^r_{2,1}$, eventually shrinking to $\GK_{2,1}$. For convenience, 
	we have labelled $\alpha_2 = d(\beta_2)$, $\alpha_1 = \bar{\delta}(\beta_2)$ and $\alpha_0 = \bar{\delta}(\beta_1)$.}
	\label{diag:totalKernel}
\end{figure}
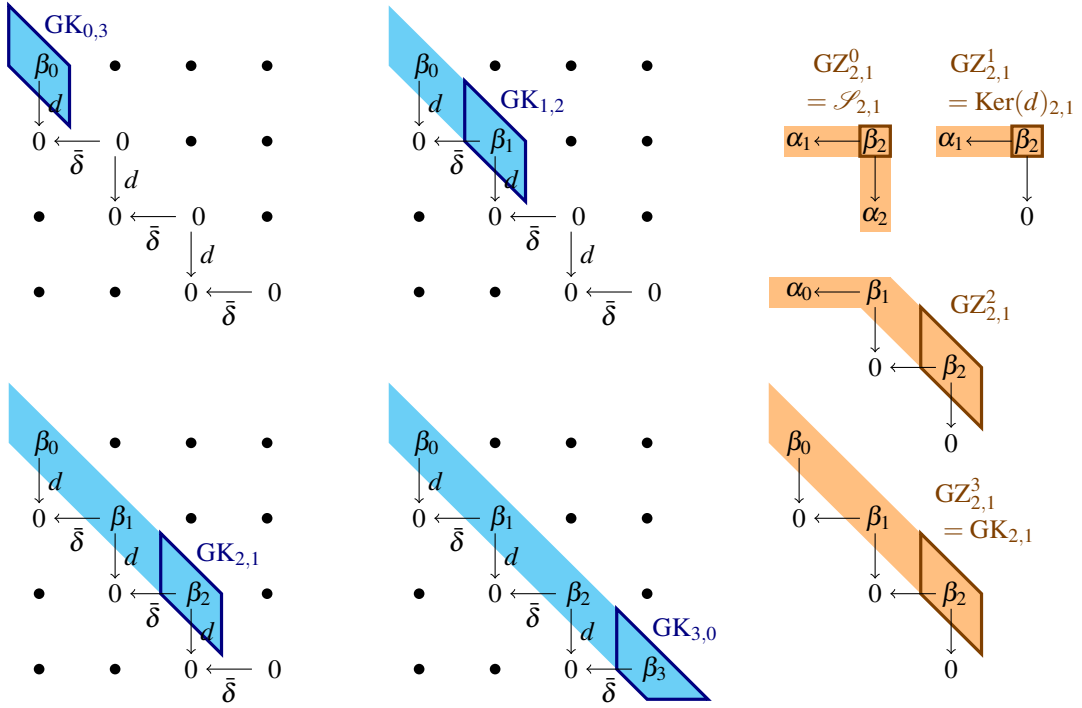

Hence, recovering the sets $\GK_{p,q}$ leads to the kernel of $d^\Tot_n$. The problem with this approach is that each subspace $\GK_{p,q}$ still requires a large
set of equations to be checked. A step-by-step way of computing these is by adding one equation at 
a time. For this we define the subspaces $\GZ^r_{p,q} \subseteq \cS_{p,q}$ where we add the first $r$ equations progressively. That is, we start setting $\GZ^0_{p,q} = \cS_{p,q}$. 
Then we define $\GZ^1_{p,q}$ to be elements $s_{p,q} \in \cS_{p,q}$ such that $d(s_{p,q})=0$, or equivalently $\GZ^1_{p,q} = \Ker(d)_{p,q}$. 
In an inductive way, for $r \geq 2$ we define $\GZ^r_{p,q}$ to be formed by elements $s_{p,q} \in \GZ^{r-1}_{p,q}$ such that there exists a sequence
$s_{p-k,q+k} \in \cS_{p-k,q+k}$ 
satisfying equations $d(s_{p-k, q+k}) = -\bar{\delta}(s_{p-k+1, q+k-1})$ for all $1 \leq k < r$. 
Then, for all $p,q \geq 0$, we have a decreasing sequence
$$
\GK_{p,q} = \GZ^{p+1}_{p,q} \subseteq  \GZ^{p}_{p,q} \subseteq \cdots \subseteq \GZ^0_{p,q} = \cS_{p,q}.
$$
For intuition see Figure~\ref{diag:totalKernel}, and also Figure~\ref{diag:kernelsFiltrationZ} for a depiction of $\GZ^2_{3,1}$ on a lattice.
A very compact way of expressing that is by the definition $\GZ^r_{p,q} = \Ker(d) \cap (\bar{\delta}^{-1} \circ d)^{r-1} (\cS_{p-r+1,q+r-1})$ for all $r \geq 1$, where by $(\bar{\delta}^{-1} \circ d)^r$ we 
mean composing $r$-times the preimage $\bar{\delta}^{-1} \circ d$. In particular, 
since $\GZ^r_{p,q} = \GZ^{p+1}_{p,q}$ for all $r \geq p+1$,  we sometimes use the convention $\GZ^\infty_{p,q} \coloneqq \GZ^{p+1}_{p,q} = \GK_{p,q}$.

Now we explain the notation $\GK_{p,q}$ and the isomorphism~(\ref{eq:non_canonical_kernel}). We start defining a \emph{vertical filtration} $F^*_V$ on $\cS_{*,*}$ by the following subcomplexes for all $r \geq 0$:
$$
F_V^r\left(\cS_{*,*}\right)_{p,q} \coloneqq 
\begin{cases}
\cS_{p,q}, {\rm \ whenever \ } p \leq r, \\
0, {\rm \ otherwise.}
\end{cases}
$$
Notice that this filtration increases with the index, so that we have inclusions
$F^r_V(\cS_{*,*}) \subseteq F^{r+1}_V(\cS_{*,*})$ for all $r \geq 0$.
Additionally, we obtain isomorphisms  $F_V^p(\cS_{*,*})/ F_V^{p-1}(\cS_{*,*}) \cong \cS_{p,*}$
for all $p \geq 0$.
The filtration $F_V^*$ respects the morphisms in $\cS_{*,*}$ in the sense that $d(F^t_V(\cS_{*,*})) \subset F^t_V(\cS_{*,*})$, and also
$\bar{\delta}(F^t_V(\cS_{*,*})) \subset F^t_V(\cS_{*,*})$. See Figure~\ref{diag:secondVertFiltrationQuotient} for a depiction of $F_V^*$.
Another point to notice is that $F^*_V$ will filter the total complex $\cS_*^\Tot$, respecting its 
differential $d^\Tot$. That is, $\cS^\Tot_n$ will be filtered by subcomplexes,
$$
F^r_V \cS^\Tot_n \coloneqq \bigoplus \limits_{\substack{p + q = n \\ p \leq r}} \cS_{p,q},
$$
for all $r \geq 0$. 
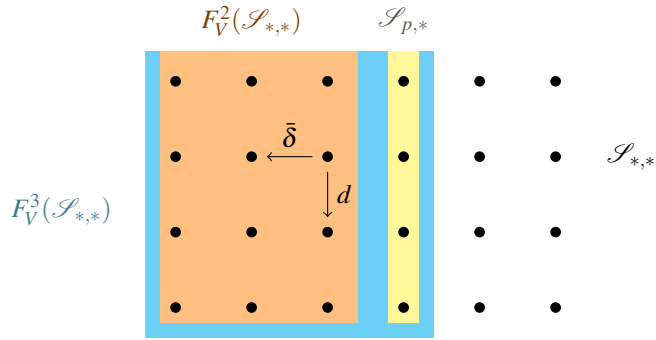
\begin{figure}
	\begin{center}
		\begin{tikzpicture}
		\fill[cyan!50] (-0.4,-0.4)--(3.4,-0.4)--(3.4,3.4)--(-0.4,3.4)--cycle;
		\fill[orange!50] (-0.2,-0.2)--(2.4,-0.2)--(2.4,3.4)--(-0.2,3.4)--cycle;
		\fill[yellow!50] (2.8,-0.2)--(3.2,-0.2)--(3.2,3.4)--(2.8,3.4)--cycle;
		\foreach \x in {0,1,...,5}{
			\foreach \y in {0,1,...,3}{
        			\fill (\x,\y) circle (2pt);
        		}
		}
		\draw[cyan!50!black] (-1.5,1.3) node {$F_V^3(\cS_{*,*})$};
		\draw[orange!50!black] (1,3.8) node {$F_V^2(\cS_{*,*})$};
		\draw[yellow!20!black] (3,3.8) node {$\cS_{p,*}$};
		\draw (6,2) node {$\cS_{*,*}$};
		\draw[->] (2,1.8)--(2,1.2) node[midway, right] {$d$};
		\draw[->] (1.8,2)--(1.2,2) node[midway, above] {$\bar{\delta}$};
		\end{tikzpicture}
	\end{center}
	\caption{ Note that $F_V^3(\cS_{*,*})/ F_V^2(\cS_{*,*}) \cong \cS_{3,*}$. 
	Also notice that the differentials $\bar{\delta}$ and $d$ respect the vertical filtration $F^*_V$.}
	\label{diag:secondVertFiltrationQuotient}
\end{figure}
In particular, notice that $\Ker(d^\Tot)$ also inherits the filtration $F_V^*$, where we will have filtration sets $F_V^p \Ker(d^\Tot)_n=F_V^p \cS^\Tot_n \cap \Ker(d^\Tot)_n$.
We define the \emph{associated modules} of $\Ker(d^\Tot)_n$ to be the quotients $G_V^p \Ker(d^\Tot)_n = F_V^p \Ker(d^\Tot)_n / F_V^{p-1} \Ker(d^\Tot)_n$, 
which can be checked to be isomorphic with $\GK_{p,q}$ for all $p+q = n$. This follows by considering morphisms
\begin{equation}\label{eq:morphGradedKernel}
\xymatrix@R=0.2cm{
 G_V^p \Ker(d^\Tot)_n \ar[r] &  \GK_{p,q},\\
 [(s_{0, n}, s_{1,n-1}, \ldots, s_{p,q}, 0, \ldots, 0)] \ar[r] & s_{p,q},
}
\end{equation}
which are well-defined since $s_{p,q}$ does not change for representatives of the same class. 
In fact, this morphism is injective since two classes with the same image will be equal by definition of $G_V^p \Ker(d^\Tot)_n$. 
On the other hand, the definition of $\GK_{p,q}$ ensures surjectivity. In particular, since we are working with vector spaces, 
we have that:
$$
\Ker(d^\Tot_n) \cong \bigoplus \limits_{p+q=n} G_V^p \Ker(d^\Tot)_n \cong \bigoplus \limits_{p+q=n} \GK_{p,q}.
$$
which justifies isomorphism~(\ref{eq:non_canonical_kernel}).

Next, we explain the notation $\GZ^r_{p,q}$. We introduce the objects
$$
Z^r_{p,q} \coloneqq \left\{
z \in  F^p_V\cS^\Tot_{p+q}\; :\; d^\Tot(z) \in F^{p-r}_V\cS_{p+q-1}^\Tot 
\right\} 
$$
for all $r \geq 0$. We can think of these as kernels of $d^\Tot$ up to some previous filtration. Then,
by definition, we have that $Z^0_{p,q} = F^p_V\cS^\Tot_{p+q}$ and 
$Z^{p+1}_{p,q} = Z^\infty_{p,q}= F^p_V \Ker(d^\Tot_{p+q})$.
Using a morphism analogous to~(\ref{eq:morphGradedKernel}), one can check that 
 the quotients $Z^{r+1}_{p,q}/Z^r_{p-1,q+1}$ are isomorphic to  $\GZ^{r+1}_{p,q}$ for all $p + q = n$. 
This is depicted in Figure~\ref{diag:kernelsFiltrationZ}.
Thus, computing these quotients increasing $r\geq 0$ leads
to the desired kernel $\Ker(d^\Tot)$. With a little more work, we can do the same for computing the homology.
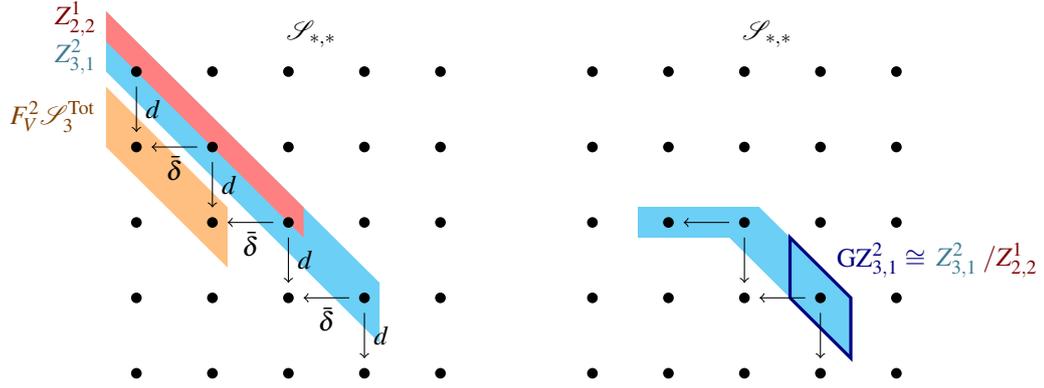
\begin{figure}
	\begin{center}
		\begin{tikzpicture}
		\fill[cyan!50] (3.2,0.4)--(3.2,1.2)--(-0.4,4.8)--(-0.4,4)--cycle;
		\fill[red!50] (2.2,1.8)--(2.2,2.2)--(-0.4,4.8)--(-0.4,4.4)--cycle;
		\fill[orange!50] (1.2,1.4)--(1.2,2.2)--(-0.4,3.8)--(-0.4,3)--cycle;
		\foreach \x in {0,1,...,4}{
			\foreach \y in {0,1,...,4}{
        			\fill (\x,\y) circle (2pt);
        		}
		}
		\draw[->] (2.8,1)--(2.2,1) node[midway, below] {$\bar{\delta}$};
		\draw[->] (1.8,2)--(1.2,2) node[midway, below] {$\bar{\delta}$};
		\draw[->] (0.8,3)--(0.2,3) node[midway, below] {$\bar{\delta}$};
		\draw[->] (0,3.8)--(0,3.2) node[midway, right] {$d$};
		\draw[->] (1,2.8)--(1,2.2) node[midway, right] {$d$};
		\draw[->] (2,1.8)--(2,1.2) node[midway, right] {$d$};
		\draw[->] (3,0.8)--(3,0.2) node[midway, right] {$d$};
		\draw[orange!50!black] (-1.1,3.4) node {$F^2_V\cS^\Tot_3$};
		\draw[cyan!50!black] (-0.8,4.2) node {$Z_{3,1}^2$};
		\draw[red!50!black] (-0.8,4.7) node {$Z_{2,2}^1$};
		\draw (2.3,4.5) node {$\cS_{*,*}$};
		\begin{scope}[xshift=7cm]
			\fill[cyan!50] (-0.4,1.8)  rectangle (1.2,2.2);
			\fill[cyan!50] (2.4,0.2)--(2.4,1)--(1.2,2.2)--(0.6,2)--cycle;
			\draw[blue!50!black, very thick] (2.4,0.2)--(2.4,1)--(1.6,1.8)--(1.6,1)--cycle;
			\foreach \x in {0,1,...,4}{
				\foreach \y in {0,1,...,4}{
        				\fill (\x-1,\y) circle (2pt);
        			}
			}
			\draw[->] (2,0.8) -- (2,0.2);
			\draw[->] (1,1.8) -- (1,1.2);
			\draw[->] (1.8 ,1) -- (1.2 ,1);
			\draw[->] (0.8 ,2) -- (0.2 ,2);
			\node[blue!50!black] at (2.8,1.5) {$\GZ_{3,1}^2 \cong$};
			\node[cyan!50!black] at (3.8,1.5) {$Z^2_{3,1}$};
			\node[red!50!black] at (4.5,1.5) {$/ Z^1_{2,2}$};
			\draw (1.3,4.5) node {$\cS_{*,*}$};
		\end{scope}
		\end{tikzpicture}
	\end{center}
	\caption{ On the left the sets $Z^2_{3,1}$ and $Z^1_{2,2}$. On the right their respective quotient $\GZ^2_{3,1}$.}
	\label{diag:kernelsFiltrationZ}
\end{figure}

There is a procedure commonly known as a \emph{spectral sequence} which leads to $\Ho_n(\cS^\Tot_*)$ after
a series of small, computable steps. This is done in an analogous way as we did before for computing $\Ker(d^\Tot)$. 
In this case we will need to take the extra steps of taking quotients by the images of $d^\Tot$. 
First notice that the vertical filtration $F_V^*$ transfers to homology $\Ho_n(\cS^\Tot_*)$ by 
the inclusions $F^p_V\cS^\Tot_* \subseteq \cS^\Tot_*$ for all $p \geq 0$. That is, we have filtered sets:
$$
F_V^p \Ho_n(\cS^\Tot_*) \coloneqq \Img \big(\, \Ho_n(F^p_V\cS^\Tot_*) \longrightarrow \Ho_n(\cS^\Tot_*)\,\big)
$$
which induce a filtration on $\Ho_n(\cS^\Tot_*)$. For this filtration the associated modules 
 will be defined by the quotients $G_V^r \Ho_n(\cS^\Tot_*) = F_V^r \Ho_n(\cS^\Tot_*) / F_V^{r-1} \Ho_n(\cS^\Tot_*)$
 for all $r \geq 0$. Notice that in this case, since we are assuming that we are working over a field, there will be no 
extension problems and we will recover the homology by taking direct sums:
$$
\Ho_n(\cS^\Tot_*) \cong \bigoplus \limits_{r = 0}^n G_V^r \Ho_n(\cS^\Tot_*). 
$$
In Section~\ref{sec:persistence-mayer-vietoris}, we will be dealing with the situation where this is not true. 
Previously, we defined the sets $Z^r_{p,q}$ which are kernels up to filtration. In an analogous way we define
boundaries up to filtration by setting
\begin{align*}
B^r_{p,q} & \coloneqq \Big\{\,
d^\Tot(c) \in F^p_V\cS^\Tot_{p+q} \;:\; c \in F^{p+r}_V\cS_{p+q+1}^\Tot \,\Big\}
\end{align*}
for all $r \geq 0$, and $p,q \geq 0$. These are images of $d^\Tot$ coming from a previous filtration.
Notice that we will have relations $d^\Tot (Z^r_{p,q}) = B^r_{p-r,q+r-1}$ and also $d^\Tot(B^r_{p,q}) = 0$.
Additionally there is a sequence
of inclusions,
$$
B_{p,q}^0 \subset B_{p,q}^1 \subset \cdots \subset B_{p,q}^{q+1} = B_{p,q}^\infty \subset
Z_{p,q}^\infty = Z_{p,q}^p \subset \cdots Z_{p,q}^1 \subset Z_{p,q}^0,
$$
for all $p,q \geq 0$. 

From the previous discussion, we start defining the \emph{first page} of the spectral sequence as the quotient
$$
E^1_{p,q} \coloneqq \dfrac{Z^1_{p,q}}{Z^0_{p-1,q+1} + B^0_{p,q}} \cong \dfrac{ \GZ^1_{p,q}}{\Img\left(B^0_{p,q}\rightarrow \GZ^1_{p,q}\right)},
$$
for all $p, q \geq 0$. Recall that $\Ker(d)_{p,q} = \GZ^1_{p,q} = Z^1_{p,q} / Z^0_{p-1,q+1}$ and also one can see that
$\Img\left(B^0_{p,q}\rightarrow \GZ^1_{p,q}\right)$ is isomorphic to $\Img(d)_{p,q}$. Then 
we deduce that $E^1_{p,q} \cong \Ho_q(\cS_{p,*}, d)$. On this page $d^\Tot$ induces differentials 
$d^1: E^1_{p,q} \rightarrow E^1_{p-1,q}$. That is, noticing that $d^\Tot (Z_{p,q}^1) = B_{p-1,q}^1 \subset Z_{p-1,q}^1$
and also $d^\Tot ( Z^0_{p-1,q+1} + B^0_{p,q}) = d^\Tot ( Z^0_{p-1,q+1}) + 0 = B^0_{p-1,q}$ we will have that
$d^1: E^1_{p,q} \rightarrow E^1_{p-1,q}$ is well-defined. Notice that since $d^\Tot \circ d^\Tot = 0$ we will also have $d^1 \circ d^1 = 0$ and 
in particular one can define the homology on the first page $\Ho_{p,q}(E^1_{*,*}, d^1)$. Since 
$$
\Ker(d^1) = \dfrac{Z^2_{p,q}}{Z^2_{p,q} \cap (Z^0_{p-1,q+1} + B^0_{p,q})} = \dfrac{Z^2_{p,q}}{Z^1_{p-1,q+1} + B^0_{p,q}}, {\rm \ and \ } 
\Img(d^1) = \dfrac{B^1_{p,q}}{B^0_{p,q}}
$$
then the second page will be
$$
E^2_{p,q} \coloneqq \Ho_{p,q}(E^1_{*,*}, d^1) = \dfrac{\Ker(d^1)}{\Img(d^1)} =  \dfrac{Z^2_{p,q}}{Z^1_{p-1,q+1} + B^1_{p,q}}.
$$

The second page has differential $d^2$ induced by the total complex differential $d^\Tot$. 
Figure~\ref{diag:secondPage} illustrates this principle. 
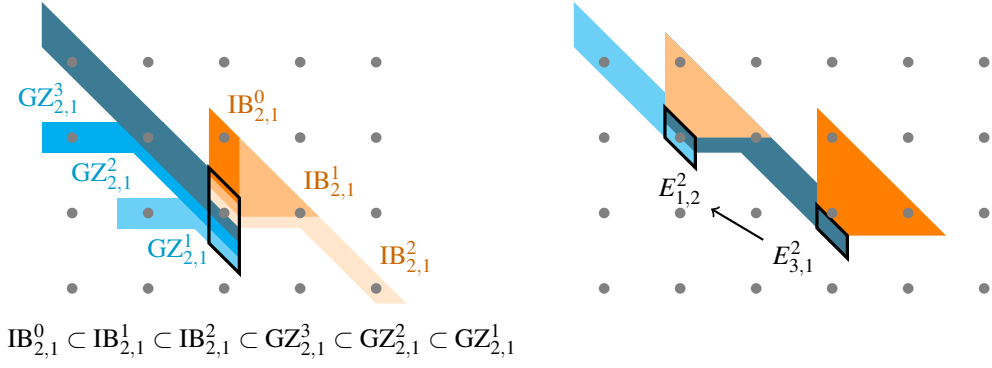
\begin{figure}
	\begin{center}
		\begin{tikzpicture}
		\begin{scope}
			\fill[cyan!50] (2.2,0.2)--(2.2,1.2)--(2,1.4)--(1.6,0.8)--cycle;
			\fill[cyan!50] (0.6,0.8)  rectangle (2,1.2);
			\fill[cyan] (-0.4,1.8)  rectangle (1.2,2.2);
			\fill[cyan] (2.2,0.4)--(2.2,1.2)--(1.2,2.2)--(0.6,2)--cycle;
			\fill[cyan!50!black] (2.2,0.6)--(2.2,1.2)--(-0.4,3.8)--(-0.4,3.2)--cycle;
			\fill[orange!20] (4,-0.2)--(4.4,-0.2)--(1.8,2.4)--(1.8,1.2)--(2.2,0.8)--(3,0.8)--cycle;
			\fill[orange!50] (1.8,2.4)--(1.8, 1.35)--(2.2,0.95)--(3.25,0.95)--cycle;
			\fill[orange] (1.8,2.4)--(1.8,1.5)--(2.2,1.1)--(2.2,2)--cycle;
			\draw[very thick] (1.8,1.6)--(1.8,0.6)--(2.2,0.2)--(2.2,1.2)--cycle;
			\foreach \x in {0,1,...,4}{
				\foreach \y in {0,1,...,3}{
        				\fill[gray] (\x,\y) circle (2pt);
        			}
			}
			\node[orange!80!black] at (4.4,0.4) {$\IB^2_{2,1}$};
			\node[orange!80!black] at (3.4,1.4) {$\IB^1_{2,1}$};
			\node[orange!80!black] at (2.4,2.4) {$\IB^0_{2,1}$};
			\node[cyan!80!black] at (1.4,0.5) {$\GZ^1_{2,1}$};
			\node[cyan!80!black] at (0.4,1.5) {$\GZ^2_{2,1}$};
			\node[cyan!80!black] at (-0.3,2.5) {$\GZ^3_{2,1}$};
			\node at (2.5,-0.7) {$\IB^0_{2,1} \subset \IB^1_{2,1} \subset \IB^2_{2,1} \subset \GZ^3_{2,1} \subset \GZ^2_{2,1} \subset \GZ^1_{2,1}$};
		\end{scope}
		\begin{scope}[xshift=7cm]
			\fill[cyan!50] (1.2,1.6)--(1.2,2.2)--(-0.4,3.8)--(-0.4,3.2)--cycle;
			\fill[cyan!50!black] (0.8,2.2)--(1.2,1.8)--(1.8,1.8)--(3.2,0.4)--(3.2,1)--(0.8,3.4)--cycle;
			\fill[orange!50] (0.8,2.4)--(1.2,2)--(2.2,2)--(0.8,3.4)--cycle;
			\fill[orange] (2.8,1.1)--(3.2,0.7)--(4.5,0.7)--(2.8,2.4)--cycle;
			\draw[very thick] (1.2,1.6) -- (1.2,2) -- (0.8, 2.4)--(0.8,2)--cycle;
			\draw[very thick] (3.2, 0.4)--(3.2, 0.7)--(2.8, 1.1)--(2.8, 0.8)--cycle;
			\foreach \x in {0,1,...,5}{
				\foreach \y in {0,1,...,3}{
        				\fill[gray] (\x,\y) circle (2pt);
        			}
			}
			\node (A) at (1,1.3) {$E^2_{1,2}$};
			\node (B) at (2.5,0.4) {$E^2_{3,1}$};
			\draw[->, thick] (B) -- (A);
		\end{scope}
		\end{tikzpicture}
	\end{center}
	\caption{On the left, the different subspaces on $\cS_{2,1}$. 
	Here  $\IB_{2,1}^r = \Img\left(B^{r}_{2,1}\rightarrow \GZ^{r+1}_{2,1}\right)$, for all $0 \leq r \leq 2$. The framed 
	region represents $\cS_{2,1}$. Brighter colours represent bigger regions than darker colours.  Note that blue and orange colours
	have been assigned to $\GZ^*_{2,1}$ and $\IB^*_{2,1}$ respectively.
	On the right, the morphism $d^2: E_{3,1}^2 \rightarrow E_{1,2}^2$ on the second page. The two framed regions
	represent the codomain and domain of $d^2$, these have been assigned brighter and darker colours, respectively.}
	\label{diag:secondPage}
\end{figure}

Doing the same for all pages we obtain the definition of the $r$-page:
$$
E^r_{p,q}  \coloneqq \Ho_{p,q}(E^{r-1}_{*,*}, d^{r-1}) = \dfrac{Z^{r}_{p,q}}{Z^{r-1}_{p-1,q+1} + B^{r-1}_{p,q}}
$$
for all $r \geq 2$. Of course, we can express alternatively the $r$-page terms as:
$$
E^r_{p,q} \coloneqq \dfrac{ \GZ^r_{p,q}}{\Img\left(B^{r-1}_{p,q}\rightarrow \GZ^r_{p,q}\right)}.
$$
Thus, the $\infty$-page is:
$$
E^\infty_{p,q} = \dfrac{Z_{p,q}^\infty}{Z_{p-1,q+1}^\infty + B_{p,q}^\infty} \cong \dfrac{ \GK_{p,q}}{\Img\left(B^{\infty}_{p,q}\rightarrow \GK_{p,q}\right)}.
$$
Then, for $n=p+q$ one has the equality 
$$
G_V^p \Ho_n(\cS^\Tot_*) = \dfrac{F_V^p \Ho_n(\cS^\Tot_*)}{F_V^{p-1} \Ho_n(\cS^\Tot_*)}
	= \dfrac{ \Img \left( \Ho_n(F^p_V\cS^\Tot_*) \longrightarrow \Ho_n(\cS^\Tot_*)\right)}{ \Img \left( \Ho_n(F^{p-1}_V\cS^\Tot_*) \longrightarrow \Ho_n(\cS^\Tot_*)\right)}
	\cong \dfrac{ Z^\infty_{p,q} / B^\infty_{p,q}}{Z^\infty_{p-1,q+1} / B^\infty_{p-1,q+1}}
	\cong E^\infty_{p,q}
$$
since $B_{p-1,q+1}^\infty \subseteq B_{p,q}^\infty$.
Therefore, computing the spectral sequence is a way of approximating the associated module $G_V^p \Ho_n(\cS^\Tot_*)$.
Thus adding up all of these leads to the result $\Ho_n(\cS^\Tot_*)$. By convention, since 
$E^\infty_{p,q} \cong G^p_V\Ho_n(\cS^\Tot_*)$ we say that $E^*_{p,q}$ \emph{converges} to $\Ho_n(\cS^\Tot)$
and we denote this as
$$
E^*_{p,q} \Rightarrow \Ho_n(\cS^\Tot).
$$

\begin{remark}
Here we have adopted the definition of $Z^r_{p,q}$ and $B^r_{p,q}$ that one can find in \citep{McCleary2001}. Other sources such as 
\citep{BoTu1982} and \citep{Lipsky2011} use the same notation for other terms. 
\end{remark}

So far, we have studied spectral sequences for vertical filtrations. 
Similarly, there is a horizontal filtration,
$$
F^r_H \cS^\Tot_n \coloneqq \bigoplus{\substack{p + q = n \\ q \leq r}} \cS_{p,q},
$$ 
for all $r \geq 0$. We can apply the same argument to this filtration, to 
obtain a spectral sequence 
$$
{_H E}_{*,*}^* \Rightarrow \Ho_n(\cS^\Tot).
$$ 
 An intuitive way of thinking of this is by applying a symmetry about the diagonal $x=y$
on the previous discussion. Thus the first page is computed with the homology with respect to horizontal differentials, the second with respect to 
vertical differentials, and so on. This leads easily to the following widely known result:
\begin{proposition}\label{prop:convSS}
$\Ho_n(\cS^\Tot_*) \cong \Ho_n(K)$.
\end{proposition}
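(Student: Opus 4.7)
The plan is to exploit the horizontal spectral sequence ${_H E}^*_{*,*} \Rightarrow \Ho_n(\cS^\Tot_*)$ introduced at the end of Section~\ref{subsec:MVss}. I will show that it collapses at the $E^2$-page with a single nonzero column whose entries are $\Ho_*(K)$; the conclusion is then immediate.

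First I would compute the $E^1$-page. Fixing $q \geq 0$ and varying $p$, the row $\cS_{*,q}$ is, up to the sign twist $\bar{\delta} = (-1)^q \delta$, the $(q, \U)$-\cech chain complex whose augmentation
\begin{equation*}
0 \leftarrow S_q(K) \leftarrow \cS_{0,q} \xleftarrow{\bar{\delta}_0} \cS_{1,q} \xleftarrow{\bar{\delta}_1} \cdots
\end{equation*}
is exact, either from the construction of $\check{C}_*(q,\U;\F)$ as a direct sum of contractible augmented complexes $\widetilde{S}_*(\Delta^{|\sigma(s)|})$ or from the flabby cosheaf argument recalled in the preliminaries. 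Since the signs do not affect kernels and images, the row has vanishing $\bar{\delta}$-homology for $p \geq 1$, while at $p = 0$ the augmentation descends to an isomorphism $\cS_{0,q}/\Img(\bar{\delta}_0) \cong S_q(K)$. Hence ${_H E}^1_{0,q} \cong S_q(K)$ and ${_H E}^1_{p,q} = 0$ for $p \geq 1$.

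Next I would identify $d^1$. On the single surviving column the only remaining part of $d^\Tot$ is the vertical differential $d$, and a short computation on representatives shows that the induced map ${_H E}^1_{0,q} \to {_H E}^1_{0,q-1}$ is exactly the simplicial boundary $d \colon S_q(K) \to S_{q-1}(K)$. Consequently ${_H E}^2_{0,q} = \Ho_q(K)$ and all other entries vanish. Because the $E^2$-page is concentrated in the column $p = 0$, every higher differential $d^r$ (for $r \geq 2$), which shifts $(p,q)$ to $(p+r-1, q-r)$, has either zero domain or zero codomain, so the sequence collapses and ${_H E}^\infty_{0,q} \cong \Ho_q(K)$.

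Finally, convergence yields a filtration of $\Ho_n(\cS^\Tot_*)$ whose associated graded is supported only in bidegree $(0,n)$; with a single nonzero quotient there is no extension problem and one reads off $\Ho_n(\cS^\Tot_*) \cong \Ho_n(K)$. The only substantive input is the exactness of the \cech rows, imported from the preliminaries; the rest is standard spectral-sequence bookkeeping. The place to be careful is checking that $d^1$ genuinely agrees with the simplicial boundary (and in particular that the $(-1)^q$ twist in $\bar{\delta}$ does not introduce a stray sign on the surviving column), which is a routine but necessary verification.
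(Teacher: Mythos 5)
Your proposal is correct and follows exactly the paper's own route: compute the horizontal spectral sequence, use the exactness of the \v{C}ech rows to collapse $E^1$ to the column $p=0$ with entries $S_q(K)$, identify the induced $d^1$ with the simplicial boundary so that $E^2_{0,q}\cong \Ho_q(K)$, observe that higher differentials vanish, and note that a single nonzero column makes the convergence trivial. The only difference is that you explicitly flag the sign check on $d^1$ and the triviality of the extension problem, both of which the paper leaves implicit.
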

\begin{proof}
In order to turn to the 
 first page, we need to compute homology with respect to the horizontal
differentials $\delta$. As shown in the preliminaries, the \cech chain 
complexes are exact,  so that:
$$
{_HE}_{*,*}^1 \coloneqq H_{0,q}({_HE}_{*,*}^0, \bar{\delta}) =
\begin{cases}
	S_q(K) & {\rm \ if \ } p = 0 { \ \rm and \ } q \geq 0 \\
	0 & {\rm \ otherwise. }
\end{cases}
$$
After this one can compute the second page by the homology with respect 
to vertical differentials $d$ induced on the first page,
$$
{_HE}_{*,*}^2 \coloneqq H_{0,q}({_HE}_{*,*}^1, d) =
\begin{cases}
	H_q(K) & {\rm \ if \ } p = 0 { \ \rm and \ } q \geq 0 \\
	0 & {\rm \ otherwise.}
\end{cases}
$$
To proceed to the next page, we would need to consider homology with respect
to diagonal differentials, 
$$
d_2 : {_HE}_{p,q}^2 \longrightarrow  {_HE}_{p+1,q-2}^2.
$$
Since the second page $E_{p,q}^2$ has only one non-zero column $p=0$, 
computing homology with respect to $d_2$ leaves this page intact. 
The same happens when 
we consider for any $r > 2$ homology with respect to differentials 
$$
d_r : {_HE}_{p,q}^r \longrightarrow {_HE}_{p+r-1,q-r}^r.
$$
Thus, we say that ${_HE}_{p,q}^*$ has \emph{collapsed} on the second page, which is 
usually denoted as ${_HE}_{p,q}^2 = {_HE}_{p,q}^\infty$. Each diagonal has a unique nonzero
entry ${_HE}^\infty_{0,q} \cong H^q(K)$. In particular, we have isomorphisms
$$
{_HE}^\infty_{0,n} \cong \Ho_n(\cS^\Tot_*) \cong \Ho_n(K),
$$
for all $n \geq 0$. 
\end{proof}

Therefore, using proposition~\ref{prop:convSS}, we have
that the spectral sequence converges to the wanted result
$$
E^*_{p,q} \Rightarrow \Ho_n(K). 
$$
In particular, since we are in the category of
vector spaces, there are no extension problems. Thus,
we have an isomorphism
$$
\Ho_n(K) \cong \bigoplus \limits_{p+q = n} E^\infty_{p,q}.
$$
Throughout the following section, we will adapt this setting to 
the category of persistence modules.

\section{Persistent Mayer-Vietoris}
\label{sec:persistence-mayer-vietoris}

One can translate the method from section~\ref{sec:MayerVietorisSS}
to \PMod. The reason for this is that \PMod~is an \emph{abelian category}, 
since \Vect~is an abelian category and \bR~is a small category. The theory
of spectral sequences can be developed for arbitrary abelian categories. 
For an introduction to this, see chapter~5 in~\citep{Weibel1994}.

Suppose that we have covered a filtered simplicial complex $K$ with filtered 
subcomplexes $\cU = \{ U_i\}_{i \in I}$, so that $K = \bigcup_{i \in I} U_i$.
Then, we can compute 
the spectral sequence 
$$
	E^1_{p,q} = \bigoplus_{\sigma \in \Delta^m_p} \PH_q(U_\sigma) \Rightarrow \PH_n(K),
$$
where $p + q = n$. However, unlike the case of vector spaces, 
we might have that
$$
\bigoplus \limits_{p+q=n} E^\infty_{p,q} \ncong \PH_n(K).
$$
All that we know is that
$E^\infty_{p,q} \cong G^p \PH_{p+q}(K)$ for all $p,q \geq 0$. 
This  is the \emph{extension problem}, which we will solve  in Section~\ref{sub:extension-problem}.
After solving this problem we will obtain the persistent  homology for $K$. We will even recover more information.
Notice that 
as pointed out in \citep{Yoon2018}, the knowledge of which subset $J \subset I$ detects a 
feature from $\PH_n(K)$ can potentially add insight into the information given by 
ordinary persistent homology. The following example illustrates this.  

\begin{example}
Consider the case of a point
cloud $X$ covered by two open sets as in Figure~\ref{fig:extensionCircleSplit2}.
From Sections \ref{sec:persistence-modules} and \ref{sec:MayerVietorisSS}, we know how to compute the $\infty$-page
$(E_{*,*}^\infty)^r$ associated to any value $r \in \bR$.  In particular, when we take 
 $r=0.5$, then the combination of $U$ and $V$ detects a $1$-cycle. On the other hand, 
 when $r=0.6$ this cycle splits into two smaller cycles which are detected by 
$U$ and $V$ individually. Notice that if we want to come up with a persistent Mayer-Vietoris
method then we need to be able to track this behaviour. That is, we need to know how cycles develop
as $r$ increases. In particular, the barcode
$\GInt{0.5}{1}$ from $\PH_1(X)$ will be broken down into some smaller barcodes, see 
diagram~\ref{fig:barcodes-extension-circle}. These will be  $E^\infty_{1,0}\cong \GInt{0.5}{0.6}$
and also $E_{0,1}^\infty \cong \GInt{0.6}{1.0} \oplus \GInt{0.6}{1.0}$. 
The way we will solve this problem is by using the barcode basis machinery developed in 
Section~\ref{sec:persistence-modules}.  

\begin{figure}
    \begin{center}
	\begin{tikzpicture}
	    \begin{scope}
	        \fill[color=blue!30] (0.3,0.5) ellipse (1 and 1.2);
	        \fill[color=yellow, semitransparent] (1.7,0.5) ellipse (1 and 1.2);
	        \draw (0.3,1.3) node{$U$};
	        \draw (1.7,1.3) node{$V$};
	    	\fill (0,0) circle (1pt);
	    	\fill (0,0.5) circle (1pt);
	    	\fill (0,1) circle (1pt);
	    	\fill (0.5,0) circle (1pt);
	    	\fill (0.5,1) circle (1pt);
	    	\fill (1,0) circle (1pt);
	    	\fill (1,1) circle (1pt);
	    	\fill (1,0.2) circle (1pt);
	    	\fill (1,0.8) circle (1pt);
	    	\fill (1.5,0) circle (1pt);
	    	\fill (1.5,1) circle (1pt);
	    	\fill (2,0) circle (1pt);
	    	\fill (2,0.5) circle (1pt);
	    	\fill (2,1) circle (1pt);
	    	\draw (1,-0.9) node{$r=0$};
	    \end{scope}
	    \begin{scope}[xshift=4cm]
	        \fill[color=blue!30] (0.3,0.5) ellipse (1 and 1.2);
	        \fill[color=yellow, semitransparent] (1.7,0.5) ellipse (1 and 1.2);
	        \draw (0.3,1.3) node{$U$};
	        \draw (1.7,1.3) node{$V$};
	    	\draw[color=red] (0,0) -- (2,0) -- (2,1) -- (0,1) -- cycle;
	    	\draw[color=red] (1,0) -- (1,0.2);
	    	\draw[color=red] (1,0.8) -- (1,1);
	    	\fill (0,0) circle (1pt);
	    	\fill (0,0.5) circle (1pt);
	    	\fill (0,1) circle (1pt);
	    	\fill (0.5,0) circle (1pt);
	    	\fill (0.5,1) circle (1pt);
	    	\fill (1,0) circle (1pt);
	    	\fill (1,1) circle (1pt);
	    	\fill (1,0.2) circle (1pt);
	    	\fill (1,0.8) circle (1pt);
	    	\fill (1.5,0) circle (1pt);
	    	\fill (1.5,1) circle (1pt);
	    	\fill (2,0) circle (1pt);
	    	\fill (2,0.5) circle (1pt);
	    	\fill (2,1) circle (1pt);
	    	\draw (1,-0.9) node{$r=0.5$};
	    \end{scope}
	    \begin{scope}[xshift=8cm]
	        \fill[color=blue!30] (0.3,0.5) ellipse (1 and 1.2);
	        \fill[color=yellow, semitransparent] (1.7,0.5) ellipse (1 and 1.2);
	        \draw (0.3,1.3) node{$U$};
	        \draw (1.7,1.3) node{$V$};
	    	\draw[color=red] (0,0) -- (2,0) -- (2,1) -- (0,1) -- cycle;
	    	\draw[color=red] (1,0) -- (1,1);
	    	\filldraw[draw=red, fill = red!20] (0.5,0) -- (1,0) -- (1,0.2) -- cycle;
	    	\filldraw[draw=red, fill = red!20] (0.5,1) -- (1,1) -- (1,0.8) -- cycle;
	    	\filldraw[draw=red, fill = red!20] (1.5,0) -- (1,0) -- (1,0.2) -- cycle;
	    	\filldraw[draw=red, fill = red!20] (1.5,1) -- (1,1) -- (1,0.8) -- cycle;
	    	\fill (0,0) circle (1pt);
	    	\fill (0,0.5) circle (1pt);
	    	\fill (0,1) circle (1pt);
	    	\fill (0.5,0) circle (1pt);
	    	\fill (0.5,1) circle (1pt);
	    	\fill (1,0) circle (1pt);
	    	\fill (1,1) circle (1pt);
	    	\fill (1,0.2) circle (1pt);
	    	\fill (1,0.8) circle (1pt);
	    	\fill (1.5,0) circle (1pt);
	    	\fill (1.5,1) circle (1pt);
	    	\fill (2,0) circle (1pt);
	    	\fill (2,0.5) circle (1pt);
	    	\fill (2,1) circle (1pt);
	    	\draw (1,-0.9) node{$r=0.6$};
	    \end{scope}
	\end{tikzpicture}
        \caption{As the radius increases, more edges are added. At radius
        $r=0.5$ a circle will be across the two covers $U$ and $V$. Later on,
        at radius $r=0.6$ this circle will be split into two. }
        \label{fig:extensionCircleSplit2}
    \end{center}
\end{figure}
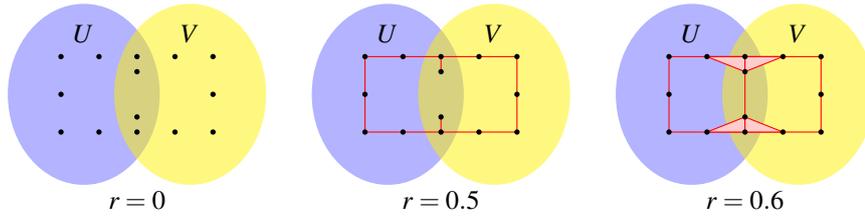

\begin{figure}
    \begin{center}
		\begin{tikzpicture}
        		\draw[red,ultra thick] (1, 2.2) -- (2, 2.2);
        		\draw[blue, ultra thick] (2, 1.8) -- (6, 1.8);
        		\draw[blue, ultra thick] (2, 1.6) -- (6, 1.6);
        		\draw[ultra thick] (1, 0.6) -- (6, 0.6);
        		\draw[ultra thick] (2, 0.4) -- (6, 0.4);
        		\draw[->] (-1,0) -- (7,0);
        		\draw[->] (0,-1) -- (0,3);
        		\draw[semitransparent] (1,-0.1)--(1,2.5);
        		\draw[semitransparent] (2,-0.1)--(2,2.5);
        		\draw[semitransparent] (6,-0.1)--(6,2.5);
        		\draw node at (1,-0.5) {$0.5$};
        		\draw node at (2,-0.5) {$0.6$};
        		\draw node at (6,-0.5) {$1.0$};
        		\draw node at (7,-0.5) {$r$};
        		\draw[red] node at (-0.5,2.2) {$E^\infty_{1,0}$};
        		\draw[blue] node at (-0.5,1.6) {$E^\infty_{0,1}$};
        		\draw node at (-0.7,0.4) {$\PH_1(K)$};
		\end{tikzpicture}
	\end{center}
	\caption{Barcode on associated module.}
       \label{fig:barcodes-extension-circle}
\end{figure}
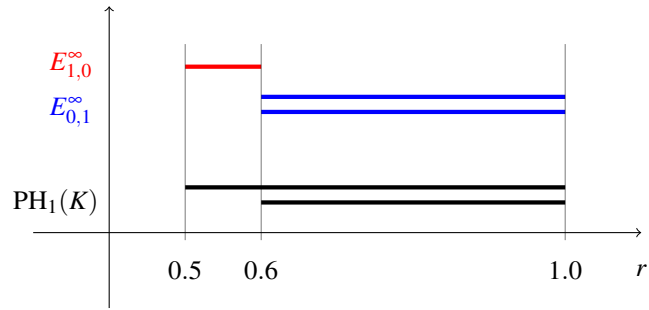
\end{example}

\subsection{The Extension Problem:}
\label{sub:extension-problem}

Recall the definition of the total complex, vertical filtrations and associated modules from 
section~\ref{sec:MayerVietorisSS}.
Through this section we study the extension problem, that is, we will  recover $\Ho_n(\cS_*^\Tot)$ from
the associated modules $G_V^p \left( \Ho_{n}\left( \cS_*^\Tot\right) \right)$. Also, we will assume that the spectral sequence collapses after a finite number of pages.
Consider the persistence module
$$
\bV = \bV(n) \coloneqq \Ho_n\left(\cS_*^\Tot\right),
$$
together with the corresponding filtration
\begin{equation}
\label{seq:split-filtered-persistence}
0 = F^{-1}_V \V\subset F^0_V \V \subset \cdots \subset  F^n_V \V= \V.
\end{equation}
We define the associated modules as the quotients $\bG^k = F^k \V / F^{k-1} \V$ for all $0 \leq k \leq n$.
This gives rise to short exact sequences,
\begin{equation}
\label{seq:short-exact-seq-filtration}
\xymatrix{
	0 \ar[r] &
	F^{k-1} \V \ar[r]^\iota &
	F^k  \V \ar[r]^{p^k} &
	\bG^k  \ar[r] & 0,
}
\end{equation}
for all $0 \leq k \leq n$. Adding up all associated modules we obtain a persistence module
$\bG \coloneqq \bigoplus_{i = 0}^n \bG^i$
 with an additional filtration given by $F^k \bG = \bigoplus_{i = 0}^k \bG^i$ for all $0 \leq k \leq n$.
Since $\bG^k \cong E^\infty_{k,n-k}$ for all $0 \leq k \leq n$, a spectral sequence algorithm will lead 
to a barcode basis for $\bG$. The extension problem consists in computing
 a basis $\B$ for $\bV$ from a basis $\cG$ of $\bG$. 

To start, notice that for each $r \in \bR$ the sequence~(\ref{seq:short-exact-seq-filtration}) 
splits, leading to morphisms
\begin{equation}
\label{eq:point-isomorphism}
	\cF^k(r) : \bG^k(r) \rightarrow F^k\V(r),
\end{equation}
such that $p^k(r) \circ \cF^k(r) = \Id_{\bG^k(r)}$ for all $0\leq k \leq n$. 
In particular, $\cF^k(r)$ is injective for all $0 \leq k \leq n$. 
On the other hand, for any class $[\beta_k]^\infty_{k,n-k}$ of $E^\infty_{k,n-k}$ with representative $\beta_k \in E^0_{k,n-k}$,
since $\beta_k \in \GK_{k,n-k}r$, we have that $d(\beta_k) = 0$ and there exists a sequence of $\beta_i \in \cS_{i,n-i}r$ such that
$d(\beta_{i}) = -\bar{\delta}(\beta_{i+1})$ for
all $0 \leq i < k$. The choice of this sequence determines $\cF^k(r)$,  so that
$$
\cF^k(r)([\beta_k(r)]^\infty_{k,n-k}) = [(\beta_0(r), \beta_1(r), \ldots, \beta_k(r),0, \ldots, 0)]^\Tot_n.
$$
Notice that if we already computed $\cG$ from the Mayer-Vietoris spectral sequence, then there is no need to
do any extra computations to obtain these morphisms $\cF^k(r)$. All we need to do is to store our previous results.
Adding over all $0 \leq k \leq n$ we obtain the isomorphism 
$\cF(r) = \bigoplus_{k=0}^n \cF^k(r) : \bigoplus_{k=0}^n \bG^k(r) \rightarrow \bV(r)$. 
This last morphism is an isomorphism since all its summands are injective, their images have mutual trivial intersection, and the dimensions of the domain and codomain coincide. 

Recall that $\bG$ has induced morphisms $\bG(r \leq s)$ from $\V(r \leq s)$ for all values
$r \leq s$ in $\bR$. Given a basis $\cG$ for $\bG$, we would like to compute a basis $\B$ for $\V$ from this
information. Notice that this is not a straightforward problem since (\ref{eq:point-isomorphism}) does not
imply that one has an isomorphism  $\cF : \bG \rightarrow \V$. A point to start is to define the image along
each generator in $\cG$.  That is, for each barcode generator $\RORel{g_i}{a_i}{b_i}$
in $\cG$, we choose an image at the start $\cF(a_i)(g_i(a_i))$.
After, we set
$\cF(r)(g_i(r)) \coloneqq  \V(a_i < r) \circ \cF(a_i)(g_i(a_i))$ for all $a_i < r < b_i$.
This leads to commutativity of $\cF$ along each generator $g_i$. Nevertheless this is still far from even defining
a morphism $\cF : \bG \rightarrow \V$.

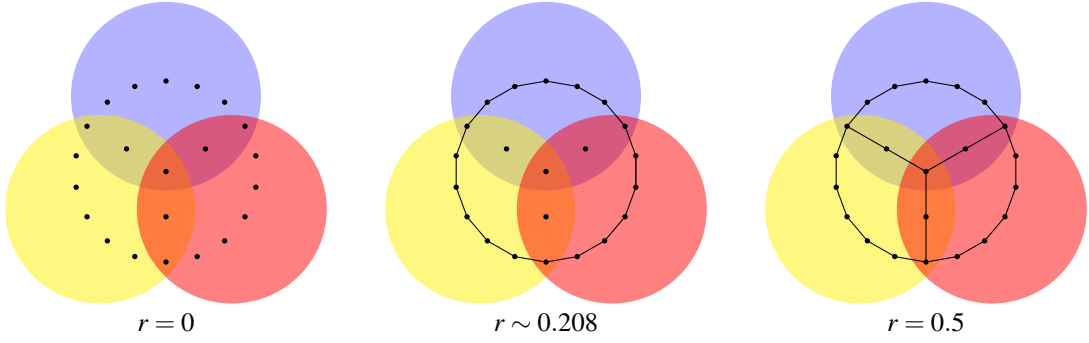
\begin{figure}
    \begin{center}
	\begin{tikzpicture}
	    \begin{scope}
	        \fill[color=blue!30] (90:1cm) circle (1.25);
	        \fill[color=yellow, semitransparent] (210:1cm) circle (1.25);
	        \fill[color=red, semitransparent] (330:1cm) circle (1.25);
	        \fill (30:0.6cm) circle (1pt);
	        \fill (150:0.6cm) circle (1pt);
	        \fill (270:0.6cm) circle (1pt);
	        \fill (0,0) circle (1pt);
	        \fill (30:1.2cm) circle (1pt);
	        \fill (50:1.2cm) circle (1pt);
	        \fill (70:1.2cm) circle (1pt);
	        \fill (90:1.2cm) circle (1pt);
	        \fill (110:1.2cm) circle (1pt);
	        \fill (130:1.2cm) circle (1pt);
	        \fill (150:1.2cm) circle (1pt);
	        \fill (170:1.2cm) circle (1pt);
	        \fill (190:1.2cm) circle (1pt);
	        \fill (210:1.2cm) circle (1pt);
	        \fill (230:1.2cm) circle (1pt);
	        \fill (250:1.2cm) circle (1pt);
	        \fill (270:1.2cm) circle (1pt);
	        \fill (290:1.2cm) circle (1pt);
	        \fill (310:1.2cm) circle (1pt);
	        \fill (330:1.2cm) circle (1pt);
	        \fill (350:1.2cm) circle (1pt);
	        \fill (10:1.2cm) circle (1pt);
	        \node at (0,-2) {$r = 0$};
	    \end{scope}
	    \begin{scope}[xshift=5cm]
	        \fill[color=blue!30] (90:1cm) circle (1.25);
	        \fill[color=yellow, semitransparent] (210:1cm) circle (1.25);
	        \fill[color=red, semitransparent] (330:1cm) circle (1.25);
	        \fill (30:0.6cm) circle (1pt);
	        \fill (150:0.6cm) circle (1pt);
	        \fill (270:0.6cm) circle (1pt);
	        \fill (0,0) circle (1pt);
	        \fill (30:1.2cm) circle (1pt);
	        \fill (50:1.2cm) circle (1pt);
	        \fill (70:1.2cm) circle (1pt);
	        \fill (90:1.2cm) circle (1pt);
	        \fill (110:1.2cm) circle (1pt);
	        \fill (130:1.2cm) circle (1pt);
	        \fill (150:1.2cm) circle (1pt);
	        \fill (170:1.2cm) circle (1pt);
	        \fill (190:1.2cm) circle (1pt);
	        \fill (210:1.2cm) circle (1pt);
	        \fill (230:1.2cm) circle (1pt);
	        \fill (250:1.2cm) circle (1pt);
	        \fill (270:1.2cm) circle (1pt);
	        \fill (290:1.2cm) circle (1pt);
	        \fill (310:1.2cm) circle (1pt);
	        \fill (330:1.2cm) circle (1pt);
	        \fill (350:1.2cm) circle (1pt);
	        \fill (10:1.2cm) circle (1pt);
	        \draw (350:1.2cm) -- (10:1.2cm);
	        \foreach \x in {10,30,50,...,350}{
	        	\draw (\x:1.2cm) -- (\x+20:1.2cm);
	        }
	        \node at (0,-2) {$r \sim 0.208$};
	    \end{scope}
	    \begin{scope}[xshift=10cm]
	        \fill[color=blue!30] (90:1cm) circle (1.25);
	        \fill[color=yellow, semitransparent] (210:1cm) circle (1.25);
	        \fill[color=red, semitransparent] (330:1cm) circle (1.25);
	        \fill (30:0.6cm) circle (1pt);
	        \fill (150:0.6cm) circle (1pt);
	        \fill (270:0.6cm) circle (1pt);
	        \fill (0,0) circle (1pt);
	        \fill (30:1.2cm) circle (1pt);
	        \fill (50:1.2cm) circle (1pt);
	        \fill (70:1.2cm) circle (1pt);
	        \fill (90:1.2cm) circle (1pt);
	        \fill (110:1.2cm) circle (1pt);
	        \fill (130:1.2cm) circle (1pt);
	        \fill (150:1.2cm) circle (1pt);
	        \fill (170:1.2cm) circle (1pt);
	        \fill (190:1.2cm) circle (1pt);
	        \fill (210:1.2cm) circle (1pt);
	        \fill (230:1.2cm) circle (1pt);
	        \fill (250:1.2cm) circle (1pt);
	        \fill (270:1.2cm) circle (1pt);
	        \fill (290:1.2cm) circle (1pt);
	        \fill (310:1.2cm) circle (1pt);
	        \fill (330:1.2cm) circle (1pt);
	        \fill (350:1.2cm) circle (1pt);
	        \fill (10:1.2cm) circle (1pt);
	        \foreach \x in {10,30,50,...,350}{
	        	\draw (\x:1.2cm) -- (\x+20:1.2cm);
	        }
	        \draw (0,0) -- (30:1.2cm);
	        \draw (0,0) -- (150:1.2cm);
	        \draw (0,0) -- (270:1.2cm);
	        \node at (0,-2) {$r = 0.5$};
	    \end{scope}
	\end{tikzpicture}
        \caption{A one loop is detected at value $r\sim 0.208$ which goes through three covers. 
        Later, at radius $r=0.5$, this loop splits into three loops, each included in one of the three covers.}
        \label{fig:extensionCircleSplit3}
    \end{center}
\end{figure}

The solution to the problem above is to define a new persistence module $\widetilde{\bG}$.
We define $\widetilde{\bG}(s) \coloneqq \bG(s)$ for all $s \in \bR$. Then, if $\cG = \finset{g}{i}{1}{G}$ is a barcode basis for $\bG$,
we will have that $\cG(s)$ will be a basis of $\widetilde{\bG}(s)$ for all $s \in \bR$. Now, given $\RORel{g_i}{a_i}{b_i}$ a
generator in $\cG$, we define the morphism $\widetilde{\bG}(r \leq s)$ by the recursive formula
$$
\widetilde{\bG}(r \leq s)(g_i(r)) \coloneqq
\begin{cases}
	\sum \limits_{j = 1}^{G} c_{i,j}\widetilde{\bG}(b_i \leq s) (g_j(b_i)) & {\rm \ if \ } r \in [a_i,b_i) {\rm \ and \ } b_i \leq s, \\
	g_i(s) & {\rm \ if \ } r, s \in [a_i,b_i), \\
	0 & {\rm \ otherwise,}
\end{cases}
$$
where $c_{i,j} \in \F$ for all $1 \leq i,j \leq G$. We want to define $c_{i,j}$ in
such a way that $\widetilde{\bG}$ is isomorphic to $\V$.  For this we
impose the commutativity condition
$$
\widetilde{\bG}(a_i \leq b_i)(g_i(a_i)) = \cF(b_i)^{-1} \circ \V(a_i \leq b_i) \circ \cF(a_i) (g_i(a_i)),
$$
which leads to the equation
\begin{equation}
\label{eq:extension-morphisms}
\sum \limits_{j = 1}^{G} c_{i,j} g_j(b_i) = \cF(b_i)^{-1} \circ \V(a_i \leq b_i) \circ \cF(a_i) (g_i(a_i)).
\end{equation}
This determines uniquely the coefficients $c_{i,j}$ for all $1 \leq i,j \leq G$. Notice that $\widetilde{\bG}$ respects the filtration on $\V$, since the right hand side
in~(\ref{eq:extension-morphisms}) is a composition of filtration preserving morphisms.
In particular, if $g_i \in F^k \widetilde{\bG}$, then $c_{i,j} = 0$ for all  $1 \leq j \leq G$ such that $g_j \notin F^k \bG$.

Fix a generator $g_i \in \bG^k$ with associated interval $[a_i, b_i)$. 
Let us calculate the coefficients $c_{i,j}$. Suppose that we have a representative $\widetilde{g}_j = (\beta_0^j, \beta_1^j, \ldots, \beta_k^j, 0, \ldots, 0) \in \cS^\Tot_n$ for each generator $g_j \in \cG$, with $g_j = [ \beta^j_k ]^\infty_{k, n-k}$.
Also, for all $0 \leq q \leq n$ we define the subset $I^q \subseteq \{1, \ldots, G\}$ of indices $1 \leq j \leq G$ such that
$g_j \in \bG^q$.
 Then the coefficients $c_{i,j}$ for $j \in I^k \setminus \{i\}$ are determined by the equality in $\bG^k(b_i)$
 $$
 p^k(b_i) \left( \left[ \widetilde{g}_i(b_i) \right]^\Tot_n\right) = \sum_{j \in I^k \setminus \{ i\}} c_{i,j} g_j(b_i).
 $$
Thus, we have
 $$
 p^k(b_i) \left( \left[ \widetilde{g}_i(b_i) - \sum_{j \in I^k \setminus \{ i\}} c_{i,j} \widetilde{g}_j(b_i) \right]^\Tot_n \right) = 0
 $$
where $[\cdot]^\Tot_{n}$ denotes the $n$-homology class of the total complex.
Hence, by~(\ref{seq:short-exact-seq-filtration}) there must exist some $\gamma \in \cS_{n+1}^\Tot(b_i)$ such that 
   \begin{equation}
  \label{eq:subs-previous-filtration-boundary}
	   \widetilde{g}_i(b_i) - \sum_{j \in I^k \setminus \{ i\}} c_{i,j} \widetilde{g}_j(b_i)  - d^\Tot \gamma 
  \end{equation}
  is contained in $F_{k-1} \cS_n^\Tot(b_i)$.
  How do we compute $\gamma$? We start by searching for the first
  page $r \geq 2$ such that
  \begin{equation}
  \label{eq:subs-previous-filtration}
	  \left[ \beta_k^i(b_i) - \sum_{j \in I^k \setminus \{ i\}} c_{i,j} \beta_k^j(b_i)\right]^{r}_{k, n-k} = 0
  \end{equation}
  where $[\cdot]^r_{k,n-k}$ denotes the class in the $r$-page in position $(k, n-k)$. 
  Notice that this $r$ must exist since we assumed that~(\ref{eq:subs-previous-filtration}) vanishes
  on the $\infty$-page. In fact, there exists $\gamma_{k + r-1} \in E_{k+r-1,n-k-r+2}^{r-1}(b_i)$
  such that 
  $$
  \left[ \beta_k^i(b_i) - \sum_{j \in I^k \setminus \{ i\}} c_{i,j} \beta_k^j(b_i) \right]^{r-1}_{k, n-k}  - d^{r-1}(\gamma_{k + r-1}) = 0
  $$
  on $E^{r-1}_{k,n-k}(b_i)$. 
  Repeating for all pages leads to $\gamma_{k + t} \in E_{k+t, n - k - t + 1}^t(b_i)$
  for all $ 0 \leq t \leq r-1$, such that 
  \begin{equation}
  \label{eq:subs-represent-bound}
   \beta_k^i(b_i) - \sum_{j \in I^k \setminus \{ i\}} c_{i,j} \beta_k^j(b_i)  - \sum_{t=0}^{r-1} \widetilde{d^{t}(\gamma_{k + t})}=0,
  \end{equation}
  where $\widetilde{d^{t}(\gamma_{k + t})} \in \cS_{k, n-k}(b_i)$ is a representative for the class $d^{t}(\gamma_{k + t}) \in E^t_{k,n-k}(b_i)$. 
  Notice that equation~(\ref{eq:subs-represent-bound}) holds independently of the representatives, since if we changed some
  term, then the other representatives would adjust to the change. 
  In particular, we have that the $k$ component of~(\ref{eq:subs-previous-filtration-boundary}) vanishes, whereas 
  the $k-1$ component will be equal to
  $$
  \beta_{k-1}^i(b_i) - \sum_{j \in I^k \setminus \{ i\}} c_{i,j} \beta_{k-1}^j(b_i)  - \bar{\delta}(\gamma_k).
  $$
  Next we proceed to find coefficients $c_{i,j} \in \F$ so that 
  in $\bG^{k-1}(b_i)$ we get the equality
  $$
  \left[ \beta_{k-1}^i(b_i) - \sum_{j \in I^k \setminus \{ i\}} c_{i,j} \beta_{k-1}^j(b_i)  - \bar{\delta}(\gamma_k) \right]^\infty_{k-1, n-k+1} = \sum_{j \in I^{k-1}} c_{i,j} g_j(b_i).
  $$
  Then we proceed as we did on $\bG^k$. 
  Doing this for all parameters $0 \leq r \leq k$, there are coefficients $c_{i,j} \in \F$, and an 
  element $\bar{\gamma} \in \cS_n^\Tot(b_i)$ so that 
   $$
  \widetilde{g}_i(b_i)  = \sum \limits_{0 \leq r \leq k} \left( \sum \limits_{j \in I^r} c_{i,j} \widetilde{g}_j(b_i) \right) + d^\Tot \bar{\gamma}.
  $$
  Thus, 
  $$
  \widetilde{\bG}(a_i \leq b_i) (g_i(a_i)) = \sum \limits_{0 \leq j \leq G} c_{i,j} g_j(b_i).
  $$

\begin{proposition}
\label{prop:filtered-persistence-modules}
$\widetilde{\bG} \cong \V$.
\end{proposition}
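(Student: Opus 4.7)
The plan is to construct an explicit isomorphism of persistence modules $\cF : \widetilde{\bG} \to \V$ by assembling the pointwise isomorphisms $\cF(r)$ defined in~(\ref{eq:point-isomorphism}). Since each $\cF(r)$ is already a bijection (a direct sum of injections whose images have trivial pairwise intersection, with matching finite dimensions on domain and codomain), all that remains is to verify that the family $\{\cF(r)\}_{r\in\bR}$ is natural with respect to the structure morphisms of $\widetilde{\bG}$ and $\V$. If naturality holds, then $\cF$ is a persistence morphism whose pointwise components are isomorphisms, hence $\cF$ is itself an isomorphism in \PMod.

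First I would check that $\widetilde{\bG}$ is a genuine persistence module, i.e.~that the recursive definition of $\widetilde{\bG}(r \leq s)$ satisfies $\widetilde{\bG}(s \leq t) \circ \widetilde{\bG}(r \leq s) = \widetilde{\bG}(r \leq t)$. The two nontrivial cases to unwind are (i) $r,s \in [a_i, b_i)$ and $t \geq b_i$, where both sides reduce to the formula in the first branch of the definition; and (ii) $r \in [a_i, b_i)$ and $s,t \geq b_i$, where the required equality is exactly the defining condition~(\ref{eq:extension-morphisms}) post-composed with $\V(b_i \leq t)$. Next I would split the naturality square for $\cF$ into two cases. When $r \leq s$ both lie in $[a_i, b_i)$ for every generator involved, commutativity is immediate from $\cF(s)(g_i(s)) = \V(a_i \leq s)\cF(a_i)(g_i(a_i)) = \V(r \leq s)\cF(r)(g_i(r))$. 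When $s$ crosses some death value $b_i$, commutativity on $g_i$ follows by applying $\V(b_i \leq s)$ to both sides of~(\ref{eq:extension-morphisms}) and using the previous case.

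The main subtlety, and what I expect to be the core technical point, is justifying that equation~(\ref{eq:extension-morphisms}) admits a solution whose coefficients $c_{i,j}$ respect the filtration, i.e.~$c_{i,j} = 0$ whenever $g_j \notin F^k \bG$ with $g_i \in \bG^k$. This is where the spectral-sequence computation described just before the proposition enters: the representative $\widetilde{g}_i(b_i)$ lies in $F^k \cS^\Tot_n(b_i)$, and modulo $d^\Tot$-boundaries one can successively subtract appropriate combinations $c_{i,j}\widetilde{g}_j$ in each graded piece $\bG^k, \bG^{k-1}, \ldots$ until the residue vanishes. The $F^*$-preservation of $\cF$ is what makes $\cF(b_i)^{-1}$ well-defined on the image of the filtered composite $\V(a_i \leq b_i) \circ \cF(a_i)$, and consequently produces coefficients supported on $F^k$.

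Finally, since all $\cF(r)$ are bijections and they commute with the structure morphisms on both sides by the previous two paragraphs, $\cF$ is an isomorphism of persistence modules, giving $\widetilde{\bG} \cong \V$ as required.
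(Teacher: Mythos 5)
Your proposal is correct and takes essentially the same approach as the paper: assemble the pointwise isomorphisms $\cF(r)$ and verify they commute with the structure morphisms of $\widetilde{\bG}$ and $\V$, so that $\cF$ is an isomorphism in $\PMod$. The paper's proof is far terser---it merely asserts that the naturality squares commute and concludes---whereas you spell out the checks (functoriality of $\widetilde{\bG}$, the case analysis around death values $b_i$ via equation~(\ref{eq:extension-morphisms}), and the filtration constraint on the $c_{i,j}$) that the paper relegates to the discussion immediately preceding the proposition.
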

\begin{proof}
	Since each $\cF(s)$ is an isomorphism, and also we have commutative squares:
$$
\xymatrix@C=1.5cm{
	\widetilde{\bG}(r) \ar[r]^{\widetilde{\bG}(r\leq s)} \ar[d]_{\cF(r)}	&
	\widetilde{\bG}(s) \ar[d]^{\cF(s)}	\\
	\V(r) \ar[r]_{\V(r \leq s)}	& \V(s)
}
$$
for all $r \leq s$, then $\cF$ must be an isomorphism of persistence modules.
\end{proof}

This gives $\widetilde{\bG} \cong \V$, but we still need to compute a barcode basis. In fact, this can be done by applying
the algorithm \texttt{image\_kernel}, but with barcode updates given by the morphisms of $\widetilde{\bG}$. The set
$\I$ which results from this procedure will be a barcode basis for $\widetilde{\bG}$, which by proposition~\ref{prop:filtered-persistence-modules}
leads to a barcode basis for $\V$.

\subsection{\permaviss}
\label{sec:permaviss}

Here we outline a procedure for implementing the persistence Mayer-Vietoris spectral 
sequence. Notice that while using the submodules $\GZ^r_{p,q}$ and $\IB^r_{p,q}$ is
a more intuitive approach from a mathematical perspective, it is more efficient
to work directly with the sets $Z^r_{p,q}$ and $B^r_{p,q}$. By storing representatives
in $Z^r_{p,q}$, we avoid repeating computations on each page and in the extension problem.  
Furthermore, this approach 
allows to easily track the complexity of the algorithm. The current implementation of 
\permaviss~(v.0.0.2) uses the sets $\GZ^r_{p,q}$ and $\IB^r_{p,q}$. However, future
versions will implement the method described here, since it is more efficient and
parallelizable. 

{\bf $0$-Page}. We start by defining the $0$-page as the quotient
$$
E^0_{p,q} = \dfrac{F^p_V \cS^\Tot_{p+q}}{F^{p-1}_V \cS^\Tot_{p+q}} \cong \cS_{p,q} = 
\bigoplus_{\sigma \in N^\U_p} S_q(U_\sigma)
$$
for all pair of integers $p,q \geq 0$. 
The $0$ differential $d^0$, is isomorphic to the standard
chain differential
$$
d^0_{p,q} \cong d_q : \cS_{p,q} \rightarrow \cS_{p,q-1}.
$$
In particular, for each simplex $\sigma \in N^\U_q$, the morphism $d^0_{p,q}$ 
restricts to a local differential 
$$
d^\sigma_q : S_q(U_\sigma) \rightarrow S_{q-1}(U_\sigma).
$$
Thus, we can compute persistent homology to obtain a local base
for the image $\Img(d^\sigma_{q+1})$ and the homology $\cE^1_{\sigma, q}$.
Putting all of these together, we get a basis for $E^1_{p,q}$
as the union $\cE^1_{p,q} = \bigcup_{\sigma \in N_p^\U} \cE^1_{\sigma, q}$. Further, 
for each generator $\alpha \in \cE^1_{p,q}\subseteq E^0_{p,q}$, we store a chain $\alpha_p \in \cS_{p,q}$
so that $\alpha = [(0, \ldots, 0, \alpha_p, 0, \ldots, 0)]^0$. 
Where we denote by $[\cdot]^r$ a class in $E^r_{p,q}$ for all $r \geq 0$. 

{\bf $1$-Page}. 
Recall that the first page elements are given as classes in the quotient
$$
E^1_{p,q} = \dfrac{Z^1_{p,q}}{Z^0_{p-1,q+1} + B^0_{p,q}}.
$$
Therefore, for each generator $\alpha \in \cE^1_{p,q} \subseteq E^0_{p,q}$, 
with $\alpha \sim [a_\alpha, b_\alpha)$, there is 
a chain $\alpha_p \in \cS_{p,q}$, 
so that $\alpha = [(0,\ldots, 0,\alpha_p, 0, \ldots,0)]^0$. Then we compute
the image of $d^1$ on $[\alpha]^1$
$$
d^1 [\alpha]^1 = \Big[\,d^\Tot (0, \ldots, 0, \alpha_p, 0, \ldots, 0)\Big]^1 = 
\Big[\,\big(0, \ldots, 0,\bar{\delta}_p(\alpha_p) , 0, \ldots, 0\big)\,\Big]^1.
$$
Now, for each simplex $\tau \in N^\U_{p-1}$, we have local 
coordinates $\big(\bar{\delta}_p(\alpha_p)\big)_\tau \in S_q(U_\tau)$. We
proceed to solve the linear equation at $a_\alpha \in \bR$
$$
\left(\, \Img\big(\,(d_{q+1}\,)_\tau\,\big)  
\,\middle\vert\,
\cE^1_{\tau,q}
\,\right)^{a_\alpha} X = \big(\bar{\delta}_p(\alpha_p)\big)_\tau,
$$
where the vector $X$ has as many entries as needed for the equation to make sense. 
Also, we have used 
$$
\left(\, \Img\big(\,(d_{q+1}\,)_\tau\,\big)  
\,\middle\vert\,
\cE^1_{\tau,q}
\,\right)^{a_\alpha} = 
\left(\, \Img\big(\,(d_{q+1}\,)_\tau\,\big)(a_\alpha)  
\,\middle\vert\,
\cE^1_{\tau,q}(a_\alpha)
\,\right)
$$
for denoting the matrix on value $a_\alpha$, and whose rows correspond to
a basis of $S_q(U_\tau)$. 
The solution $X$ leads to coefficients $c^1_{\beta} \in \F$ for all $\beta \in \cE^1_{\tau,q}$
and an element $a_{\tau} \in S_{q+1}(U_\tau)$ so that
$$
(\bar{\delta}_p(\alpha_p))_\tau + d_{ q+1}^\tau (a_{\tau}) = 
\sum \limits_{\beta \in \cE^1_{\tau,q}} c^1_\beta \beta_{p-1}.
$$
Repeating this for all $\tau \in N^\U_{p-1}$, we get 
coefficients $c^1_{\beta} \in \F$ for all $\beta \in \cE^1_{p-1,q}$
as well as  a chain $a_{p-1} = \big( a_{\tau} \big)_{\tau \in N^\U_{p-1}} \in \cS_{p-1,q}$
so that
$$
\bar{\delta}_p(\alpha_p) + d_{q+1}(a_{p-1}) = 
\sum \limits_{\beta \in \cE^1_{p-1,q}} c^1_\beta \beta_{p-1}.
$$
Here we define the representative 
$\widetilde{\alpha} = (0, \ldots, 0, a_{p-1}, \alpha_p, 0, \ldots, 0) \in \cS^\Tot_{p+q}$, and 
repeating this for all generators in $\cE^1_{p,q}$, we get a set of
corresponding representatives $\widetilde{\cE^1_{p,q}}$. 
On the other hand, the computed coefficients $c^1_\beta$ mean that
$d^1_{p,q}$ performs the assignment
$$
(1_\F)_\alpha \mapsto (c_\beta)_{\beta \in \cE^1_{p-1,q}}.
$$
Thus, we obtain an associated matrix $D^1_{p,q}$ for $d^1_{p,q}$. 
Using \imagekernel, we compute bases
for the kernel and image.  
Additionally, for each generator $j \in \Img(d^1_{p,q})$, we store 
a \emph{preimage} $p_j \in E^1_{p+1,q}$ such that $d^1(p_j) = j$. 
This can be done by storing coefficients $c_\gamma^1$ for all 
$\gamma \in \cE^1_{p+1,q}$ so that $p_j = \sum_{\gamma \in \cE^1_{p+1,q}} c_\gamma^1 \gamma$. 
Notice that these coefficients are given by \imagekernel~by asking
to return the matrix $T$. This leads to the second page by 
applying \imagekernel~to compute the quotient $\Ker(d^1)/\Img(d^1)$, 
obtaining bases $\cE^2_{p,q}$.

{\bf $2$-Page}. 
Now, we proceed to compute the third page. We start from $\alpha \in \cE^2_{p,q} \subseteq E^1_{p,q}$, 
with $\alpha \sim [a_\alpha, b_\alpha)$
and coordinates $\alpha = (b_\beta)_{\beta \in \cE^1_{p,q}}$.
Then, this leads to a total complex representative 
$$
\widetilde{\alpha} = (0, \ldots, 0, \alpha_{p-1}, \alpha_p, 0, \ldots, 0) = 
\sum \limits_{\beta \in \cE^1_{p,q}} b_\beta \widetilde{\beta}
$$
Since $\alpha \in \Ker(d^1)$, we have that
$$
d^2 [\alpha]^2 = [d^\Tot \widetilde{\alpha}]^2 = 
[(0, \ldots, 0, \delta_{p-1}(\alpha_{p-1}), 0, \ldots, 0)]^2.
$$
As before, by solving local linear equations, we can compute coefficients 
$c^1_\beta \in \F$ for all $\beta \in \cE^1_{p-2,q+1} \subseteq \cS^\Tot_{p+q-1}$
and an element $a \in \cS_{p-2,q+2}$ such that
$$
\bar{\delta}_{p-1}(\alpha_{p-1}) + d_{q+2}(a) = 
\sum \limits_{\beta \in \cE^1_{p-2,q+1}} c^1_\beta \beta.
$$
Now, we solve the linear equation on $X$ and value $a_\alpha \in \bR$
$$
\left(\, \Img(d^1) 
\,\middle\vert\,
\cE^2_{p-2,q+1}
\,\right)^{a_\alpha}X = (c_\beta^1)_{\beta \in \cE^1_{p-2,q+1}}.
$$
The solution $X$ leads to coefficients $c_\beta^2 \in \F$ for all 
$\beta \in \cE^2_{p-2,q+1}$ and $c_\gamma^1 \in \F$
for all $\gamma \in \cE^1_{p-1,q+1}$, so that 
$$
\left[\sum_{\beta \in\cE^1_{p-2,q+1}} c_\beta^1 \beta \right]^1
+ d^1\left( \left[\sum_{\gamma \in\cE^1_{p-1,q+1}} c_\gamma^1 \gamma \right]^1 \right)
= \sum_{\beta \in \cE^2_{p-2,q+1}} c_\beta^2 \beta
$$
Then, we change the total complex representative $\widetilde{\alpha}$ to be
$$
(0, \ldots, 0, a, \alpha_{p-1}, \alpha_p, 0, \ldots, 0) + 
\sum_{\gamma \in \cE^1_{p-1,q+1}} c_\gamma^1 
\, d^\Tot(0, \ldots, 0, \gamma_{p-2}, \gamma_{p-1}, 0, \ldots, 0)
$$
On the other hand, we have that $d^2_{p,q}$ performs the assignment
$$
(1_\F)_\alpha \mapsto (c^2_\beta)_{\beta \in \cE^2_{p-2,q+1}}.
$$
Repeating this for all $\alpha \in \cE^2_{p,1}$, we obtain a matrix
$D^2_{p,q}$ associated to $d^2_{p,q}$. Then applying \imagekernel~we obtain bases for the 
kernel, images and preimages. Then, applying \imagekernel~one more time we 
obtain generators for the third page $\cE^3_{p,q}$. 

{\bf $k$-Page}. 
Suppose that we have computed generators $\cE^k_{p,q} \subseteq E^{k-1}_{p,q}$, together
with total complex representatives $\widetilde{\cE^{k-1}_{p,q}}$ for $k \geq 3$. 
Let a generator $\alpha \in \cE^k_{p,q}$ with $\alpha \sim [a_\alpha, b_\alpha)$
and coordinates $(b_\beta)_{\beta \in \cE^{k-1}_{p,q}}$.
Then, we define a representative
$$
\widetilde{\alpha} = (0, \ldots, 0, \alpha_{p-k+1}, \ldots, \alpha_p, 0, \ldots, 0) = 
\sum \limits_{\beta \in \cE^{k-1}_{p,q}} b_\beta \widetilde{\beta}
$$ 
so that $\alpha = [\widetilde{\alpha}]^{k-1}$. 
Since $\alpha \in \Ker(d^{k-1})$, we have that $\widetilde{\alpha} \in Z^k_{p,q}$ and
as a consequence
$$
d^k[\alpha]^k = [d^\Tot(\widetilde{\alpha})]^k = 
[(0, \ldots, 0, \bar{\delta}_{p-k+1}(\alpha_{p-k+1}), 0, \ldots, 0)]^k.
$$
In particular, if $p-k+1 \leq 0$, then $d^k[\alpha]_k = 0$. 
On the other hand, for $p-k+1 > 0$, we `lift' $d^\Tot (\widetilde{\alpha})$
to the $k$-page. We start on the $0$-page, where we can repeat the procedure
outlined on the $1$-page, to obtain coefficients
$(c_\beta^1)_{\beta \in \cE^1_{p-k,q+k-1}}$, and an element
$a \in \cS_{p-k,q+k}$ so that
$$
\bar{\delta}_{p-k+1}\big(\,\alpha_{p-k+1}\,\big) + 
d^0 \big( \, a \,\big) = 
\sum_{\beta \in \cE^1_{p-k, q+k-1}} c_\beta^1 \beta_{p-k}.
$$
Next, for each $k \geq r \geq 2$, we solve the linear equation on $X$ and on value $a_\alpha \in \bR$
$$
\left(\,
\Img\big(\,d^{r-1}_{p -k +r - 1, q +k - r + 1}\,\big) 
\,\middle\vert\,
\cE^{r}_{p-k, q+k-1}
\,\right)^{a_\alpha} X = 
(c_\beta^{r-1})_{\beta \in \cE^{r-1}_{p-k,q+k-1}}
$$
which leads to  coefficients $(c_\beta^{r}\in \F)_{\beta \in \cE^{r}_{p-k, q+k-1}}$
and $(c_\gamma^{r-1}\in \F)_{\gamma \in \cE^{r-1}_{p -k + r - 1, q +k - r + 1}}$
such that
$$
\left[\sum_{\beta \in\cE^{r-1}_{p-k,q+k-1}} c_\beta^{r-1} \widetilde{\beta} \right]^{r-1}
+ d^r\left( \left[\sum_{\gamma \in\cE^{r-1}_{p-k+r-1,q+k-r+1}} c_\gamma^{r-1} \, \gamma \right]^{r-1} \right)
= \sum_{\beta \in \cE^r_{p-k,q+k-1}} c_\beta^{r} \beta.
$$
Eventually, we obtain the coefficients $(c_\beta^{k})_{\beta \in \cE^{k}_{p-k, q+k-1}}$. 
This leads to the associated matrices, and then we can compute \imagekernel, etc.
On the other hand, we redefine the representative of $\alpha$ as
$$
\widetilde{\alpha} = (0, \ldots, 0, a, \alpha_{p-k+1}, \ldots, \alpha_p, 0, \ldots, 0) + 
\sum_{r=1}^{k-1} \left( \, \sum_{\gamma \in\cE^r_{p-k+r,q+k-r}} c_\gamma^r \, d^\Tot (\widetilde{\gamma}) \, \right)
$$
This leads to the set of representatives 
$\widetilde{\cE}^k_{p,q} \subseteq Z^k_{p,q}$. 
 
\subsection{Extension Problem} 
After computing all pages  of the spectral sequence, we still have to 
solve the extension problem. It turns out that the procedure is
almost exactly the same as for when computing a page on the spectral sequence. 
We start from a basis $\cE^\infty_{p,q}$, with
total complex representatives $\widetilde{\cE}^\infty_{p,q}$.  
Since we assume that the spectral sequence is bounded, it 
collapses at an $L>0$ page. 
Then, for each generator $\alpha \in \cE^L_{p,q}$, with $\alpha \sim [a_\alpha, b_\alpha)$, 
we have a corresponding representative 
$$
\widetilde{\alpha} = (\alpha_0, \ldots, \alpha_p, 0, \ldots, 0) \in \cS^\Tot_{p+q}
$$
in $\widetilde{\cE}^L_{p,q}$. 
The main procedure consists in lifting $\alpha_p$ to the $L$-page.
We do this by means of local linear equations as done on the $1$-page. However, 
this time, instead of using the value $a_\alpha$ we use $b_\alpha$. 
This leads to $a \in \cS_{p,q+1}$ and coefficients $(c^1_\beta)_{\beta \in \cE^1_{p,q}}$
so that 
$$
\alpha_p + d_{q+1}(a) = 
\sum \limits_{\beta \in \cE^1_{p,q}} c^1_\beta \beta_{p}.
$$
The same happens for all the pages $1 \leq r \leq L$, where all the
linear equations are using the value $b_\alpha$. 
This leads to coefficients $(c_\gamma^r)_{\gamma \in \cE^r_{p+r,q-r+1}}$ for 
all $1 \leq r \leq L-1$, and also $(c_\beta^L)_{\beta \in \cE^L_{p,q}}$.  
Then, we define 
$$
\widetilde{\alpha}^{p-1} = 
\widetilde{\alpha} + d^\Tot(0, \ldots, 0, a, 0, \ldots, 0) + 
\sum_{r = 1}^{L-1}\left( \, \sum_{\gamma \in \cE^r_{p+r, q-r+1}} c^r_\gamma d^\Tot(\widetilde{\gamma})\,\right)
- \sum_{\beta \in \cE^L_{p,q}} c_\beta^L \widetilde{\beta}.
$$
In particular, notice that $[\widetilde{\alpha}^{p-1}]^L = 0$. In fact, 
for all integers $L-1 \geq r \geq 0$ one has that $[\widetilde{\alpha}^{p-1}]^r=0$, 
since both the adding and substracting terms are a sum of elements
in $\cE^r_{p,q}$ with the same coefficients. 
As a consequence the $p$-component of $\widetilde{\alpha}^{p-1}$ vanishes, so 
$\widetilde{\alpha}^{p-1} \in F^{p-1}\cS^\Tot_{p+q}$.  Then, 
one can repeat this process with $\widetilde{\alpha}^{r}$ for
all $p-1 \geq r \geq 0$. 
This leads to all coefficients $(c^L_\beta)_{\beta \in \cE^L_{p - r, q + r}}$
for all $0 \leq r \leq p$, which solves the extension problem. That is, 
we have an assignment 
$$
(1_\F)_\alpha \mapsto (c^L_\beta)_{\beta \in \cE^L_{p+q}}.
$$
and a matrix associated to the extensions. Then, applying
\imagekernel, we obtain a barcode basis for persistent homology. 
This is more efficient than the solution presented in section~\ref{sub:extension-problem}, 
however, the former is more intuitive. 

\subsection{Complexity Analysis}
\label{sec:complexity_permaviss}
Let $D_s$ be the maximum simplex dimension in $K$, 
and $\dimn(N^\U)$ the dimension of the nerve. 
Let $L$ be the number of pages. 
Denote $N^\U_{\geq 1} = \bigcup_{k\geq 1} N^\U_k$. 
Let 
$$
X = \max \limits_{q \geq 0, \, \sigma \in N^\cU}\left\{\, | S_q(U_\sigma)| \,\right\}
$$
and let 
$$
Y = \max \limits_{q \geq 0, \, \sigma \in N^\cU_{\geq 1}}\left\{\, | S_q(U_\sigma)| \, \right\}.
$$
Notice that $X \geq Y$. 
On the other hand, we define
$$
H = \max \limits_{p, q \geq 0} \left\{\, |E_{p,q}^1| \, \right\}.
$$
Let $n$ be the number of values in $\bR$ where some bar changes in 
the first page generators $\cE^1_{p,q}$. Notice that one
has $n \leq 4H$. Assume $P$ is the number of processors. 

{\bf $0$-page}. When computing the first page, all we need to do is calculate 
persistent homology in parallel. Then, the complexity is
$$
\left\lceil \dfrac{|\U|}{P}\right\rceil\cO(X^3) + 
\left\lceil \dfrac{|N^\U_{\geq 1}|}{P}\right\rceil\cO(Y^3)
$$
This leads to generators for the first page.

{\bf $1$-page}. For the first page, recall that we start from a generator 
$\alpha \in \cE^1_{p,q}$ with $\alpha \sim [a_\alpha, b_\alpha)$ and proceed to solve $|N^\cU_{p-1}|$ linear
equations. Notice that this can be done for all generators from
$\cE^1_{p,q}$ simultaneously. This is because as the value $a_\alpha$ changes, 
only columns are added and removed to the local linear equations, leaving the rows intact. 
On the other hand, we need to execute \imagekernel~on at most
$\dimn(N^\U) D_s$ elements on the first page. Notice that for each of these, we first compute  
a basis for the images and kernels,  and afterwards we perform the quotients. Each of these
takes a complexity of at most $\cO( 4 H^4)$.  
Also, we need to add the complexity of the \cech~differential. An option for computing this, is to compare
simplices in different covers by their vertices; two simplices are the same iff they share the same vertex set. 
This would take less than $\cO(|N^\U|D_sX^2H)$ operations.
Thus the overall complexity becomes
$$
\left\lceil \dfrac{|\U|}{P}\right\rceil \cO(X^2 H) + 
\left\lceil \dfrac{|N^\cU_{\geq 1}|}{P}\right\rceil \cO(Y^2 H) +
\left\lceil \dfrac{\dimn(N^\U) D_s}{P}\right\rceil 
\left(\cO(|N^\U|D_sX^2H) + \cO( 4 H^4)\right)
$$

{\bf $k$-page}. Now, we proceed for the complexity
of the page $k \geq 2$. This is the same as for the 
$1$ page, with the addition of Gaussian eliminations
of higher pages. These take at most $\cO(H^2)$ time for
each generator in $\cE^r_{p,q}$. If we do these
for all generators simultaneously, since
we need to update both rows and columns in a matrix, we might use
\imagekernel~and the
complexity becomes $\cO(nH^3)$. Denoting by  
$L$ the infinity page, we have the new term
$$
\left\lceil \dfrac{\dimn(N^\U) D_s}{P}\right\rceil
\cO(4L H^4)
$$
which added to the complexity of the $1$-page, 
we obtain
\begin{eqnarray*}
\left\lceil \dfrac{|\U|}{P}\right\rceil \cO(X^2 H) + 
\left\lceil \dfrac{|N^\cU_{\geq 1}|}{P}\right\rceil \cO(Y^2 H) +
\left\lceil \dfrac{\dimn(N^\U) D_s}{P}\right\rceil
\left(\cO(|N^\U|D_sX^2H) + \cO(4 H^4) + \cO(4L H^4)\right) \\
= 
\left\lceil \dfrac{\dimn(N^\U) D_s}{P}\right\rceil
\left(\cO(|N^\U|D_sX^2H) + \cO(4L H^4)\right).
\end{eqnarray*}

{\bf Extension problem}. If the spectral sequence collapses at $L>0$, then the
complexity of extending all generators in $\cE^L_{p,q}$ is bounded by that of
computing the $L$ page about $D_s$ times.

{\bf Overall complexity}. Altogether, we have a complexity bounded by that
of computing the first page plus that of computing the $L$ page $L+D_s$ times. Here
the $L$ comes from computing the $L$ page $L$ times and $D_s$ from the extension problem.
Thus, the overal complexity is bounded by 
\begin{eqnarray*}
\left\lceil \dfrac{|\U|}{P}\right\rceil\cO(X^3) + 
\left\lceil \dfrac{|N^\U_{\geq 1}|}{P}\right\rceil\cO(Y^3)  
 + \Big(L + D_s\Big)
\left\lceil \dfrac{\dimn(N^\U) D_s}{P}\right\rceil
\left(\cO(|N^\U|D_sX^2H) + \cO(4L H^4)\right).
\end{eqnarray*}
Notice that in general $D_s$, $L$ and $\dimn(N^\cU)$ are much smaller
than $H$ and $X$. Thus, 
for covers such that $Y \ll X$ and $|N^\U| \ll X$, and assuming we 
have enough processors,
the complexity can be simplified to the two dominating terms
\begin{eqnarray*}
\cO(X^3) +  \cO(H^4).
\end{eqnarray*}
Notice that this last case is satisfied for those covers whose mutual intersections are 
generally smaller than each cover.  
Also, in this case $H$ is approximately of the order of nontrivial barcodes over all 
the input complex.
This shows that \permaviss~isolates simplicial data, 
while only merging homological information. It is worth to notice that in general $H$, 
being the number of nontrivial bars, is much 
smaller than the size of the whole simplicial complex. However, in some cases this might not be true. 
Nevertheless our complexity estimates are very generous, leaving plenty of space for improvement 
on concrete applications. 

\section{Conclusion}

We started by developing linear algebra for persistence modules. In doing so, 
we introduced bases of persistence modules, as well as associated matrices to morphisms. 
Also, we presented Algorithm~\ref{cde:img-kernel}, which computes bases 
for the image and the kernel of a persistence morphism between any pair of 
tame persistence modules. 
Then a generalization of traditional persistent homology was introduced in Subsection~\ref{sub:persistence-module-homology}.
This theory, has helped us to define and understand the Persistent Mayer-Vietoris spectral sequence. 
Furthermore, we have provided specific guidelines for a distributed algorithm,
with a solution to the extension problem presented in Section~\ref{sub:extension-problem}.
The \permaviss~method presented in section~\ref{sec:permaviss} isolates simplicial 
information to local matrices, while merging only homological information between different covers. 
Thus, the complexity of this method is dominated by the size of a local complex plus the 
order of barcodes over all the data. 
A first implementation of these results can be found in~\citep{permaviss}. 
Coding an efficient implementation from the pseudo-code given in this paper, and benchmarking its performance compared to other methods,  will be a matter of future research.
Another interesting direction of research is how to merge this method with existing algorithms, such as those from \citep{Chen2011, Chen2013, DeSilva2011, Milosavljevic2011}. Especially it would be interesting to explore the possible interactions of discrete Morse theory and this approach, see~\citep{CGN2016}.
Additionally, it will be worth exploring, both theoretically and practically, which are the most 
suitable covers for different applications. 
Finally, we would also like to study the additional information given by the covering. 
This will add locality information from persistent homology. In particular, it is worth noticing 
that on experiments the two most expensive pages to compute are the first and second one. This is why we have a strong belief that most of the extra information will be contained in the first two pages. 

\section{Acknowledgements}
I would like to thank my supervisor Dr. Ulrich Pennig who suggested this topic and has been very helpful and supportive in the development of these ideas. Also I would like to thank Dr. Padraig Corcoran and Dr. Thomas E. Woolley with whom I have discussed these ideas on several occasions, and have given me invaluable advice. A special thank you goes to the anonymous reviewer who took the time to read the first version of this work. Finally I would like to express my gratitude to EPSRC for the grant EP/N509449/1 support with project number 1941653, without which I would not have been able to write this work.

\bibliography{library}

\bibliographystyle{plainnat}

\end{document}